\newtheorem{theorem}{Theorem}[section]
\newtheorem{lemma}[theorem]{Lemma}
\newtheorem{proposition}[theorem]{Proposition}
\newtheorem{corollary}[theorem]{Corollary}
\newtheorem{question}[theorem]{Question}
\theoremstyle{definition}
\newtheorem{definition}[theorem]{Definition}
\newtheorem{remark}[theorem]{Remark}
\newtheorem{example}[theorem]{Example}
\newtheorem{caution}[theorem]{Caution}
\newcommand{\op}[1]{\operatorname{#1}}
\newcommand{\newterm}{\textsf}
\newcommand{\dbcoh}[1]{\operatorname{D}^{\operatorname{b}}(\operatorname{coh }#1)}
\newcommand{\dsing}[1]{\operatorname{D}_{\operatorname{sg}}(#1)}
\newcommand{\gm}{\mathbb{G}_m}
\renewcommand{\div}{\operatorname{div}}
\newcommand{\I}{\mathcal I}
\newcommand{\J}{\mathcal J}
\newcommand{\cone}{\operatorname{Cone}}
\def\Z{\op{\mathbb{Z}}}
\def\C{\op{\mathbb{C}}}
\def\R{\op{\mathbb{R}}}
\def\Q{\op{\mathbb{Q}}}
\def\O{\op{\mathcal{O}}}
\def\A{\op{\mathbb{A}}}
\def\P{\op{\mathbb{P}}}
\def\T{\op{\mathcal{T}}}
\def\aut{\operatorname{Aut}}
\def\diag{\operatorname{diag}}
\def\tif{\text{if } }
\def\tand{\text{ and } }
\begin{document}

\title{Derived Categories of BHK Mirrors}

\author{David Favero}
\address{
    University of Alberta, Department of Mathematics,  Edmonton, AB Canada \newline
  Korean Institute for Advanced Study, Seoul, South Korea}
   \email { favero@ualberta.ca} 

\author{Tyler L. Kelly}
\address{
University of Cambridge, DPMMS, Cambridge CB3 0WB, United Kingdom \newline
    University of Birmingham, School of Mathematics, Edgbaston, Birmingham B15 2TT, United Kingdom}
   \email{ t.kelly.1@bham.ac.uk}

\numberwithin{equation}{section}

\begin{abstract}
We prove a derived analogue to the results of Borisov, Clarke, Kelly, and Shoemaker on the birationality of Berglund-H\"ubsch-Krawitz mirrors. Heavily bootstrapping off work of Seidel and Sheridan, we obtain Homological Mirror Symmetry for Berglund-H\"ubsch-Krawitz mirror pencils to hypersurfaces in projective space.	
\end{abstract}

\maketitle
\tableofcontents

 \section{Introduction}

In 1989, Candelas, Lynker, and Schimmrigk wrote a prophetic paper with computer-based evidence of a mathematical phenomenon predicted by string theorists.  Their paper provides a list of Calabi-Yau hypersurfaces in weighted-projective 4-space which mostly partner off.  Namely, if there is a Calabi-Yau threefold with Hodge numbers $(h^{1,1}, h^{2,1})$ on the list then there is often one with the Hodge numbers flipped: $(h^{2,1}, h^{1,1})$ \cite{CLS90} - the so called mirror.
Greene and Plesser followed with a physical construction of the mirror partners to Fermat hypersurfaces in weighted-projective spaces  \cite{GP90}.

The next generalization was provided by  Berglund and H\"ubsch  \cite{BH93}.
The Berglund-H\"ubsch construction provides a mirror for quasismooth hypersurfaces in a weighted-projective space.   One takes a polynomial
$$
F_A := \sum_{i=0}^n \prod_{j=0}^n x_j^{a_{ij}}
$$
associated to an invertible matrix $A = (a_{ij})$ which defines a quasismooth hypersurface  in weighted projective space $\mathbf{P}(q_0,\ldots,q_n)$. Its mirror is roughly the hypersurface given by the transposed polynomial
$$
F_{A^T} : = \sum_{i=0}^n \prod_{j=0}^n x_j^{a_{ji}}
$$
in another weighted projective space.  More precisely, one takes additional quotients on both sides by finite groups which correspond to an exchange of the geometric and quantum symmetries of the polynomials $F_A$ and $F_{A^T}$.

This proposal had its limitations.  For example, it was unable to accommodate the latest theory seen in a paper of Candelas, de la Ossa, and Katz \cite{CdK95}.
 Fortunately, a toric mirror construction due to Batyrev \cite{Bat} saved the day.  Batyrev's mirror construction was extended to Calabi-Yau complete intersections by Batyrev and Borisov the following year, providing a pivotal construction for future work on mirror symmetry.

In 2007,  Berglund-H\"ubsch mirrors  resurfaced in a series of articles after Fan, Jarvis, and Ruan used the Berglund-H\"ubsch construction to explain the self-duality of $A_n$ and $E_n$ singularities and study Landau-Ginzburg mirror symmetry \cite{FJR13}. Soon afterward, Krawitz gave a well-defined version of Berglund-H\"ubsch mirror symmetry \cite{Kr09} and Chiodo and Ruan \cite{CR11} went on to prove that the Berglund-H\"ubsch-Krawitz (BHK) mirrors form a mirror pair on the level of Chen-Ruan orbifold cohomology \cite{CR04} (and consequently stringy cohomology).

At this point, both Batyrev-Borisov mirrors and Berglund-H\"ubsch-Krawitz mirrors had evidence of being correct mirrors; however, given a Calabi-Yau hypersurface that has both a Batyrev-Borisov mirror and a BHK mirror, these mirrors may not be isomorphic.  To make matters worse, varying certain choices involved in either construction can result in multiple mirrors.  What to do?

As it turns out, this phenomenon is not so mysterious.
In the physics literature, it is a well-studied story about different phases or energy limits of the mirror.  Meanwhile in the math literature, we have a more specific ansatz:  the paper of Clarke \cite{Cla08} which unifies the constructions of Givental, Hori-Vafa, Berglund-H\"ubsch, and Batyrev-Borisov, together with Kontsevich's Homological Mirror Symmetry Conjecture.

In light of Kontsevich's Homological Mirror Symmetry Conjecture, a mirror pair of Calabi-Yau manifolds $\mathcal{M}$ and $\mathcal{W}$ should exchange symplectic and complex data at the level of categories. Namely, the Fukaya category of $\mathcal{M}$ (the A-model) should be equivalent to the bounded derived category of coherent sheaves of its mirror $\mathcal{W}$ (the B-model), i.e., 
\[
\op{Fuk}(\mathcal{M}) \cong \dbcoh{\mathcal{W}} \tand \op{Fuk}(\mathcal{W}) \cong \dbcoh{\mathcal{M}} .
\]

Consider a Calabi-Yau manifold $\mathcal M$.  As a consequence of the Homological Mirror Symmetry Conjecture, the derived category of its mirror should depend neither  on the construction of the mirror nor on the complex structure of $\mathcal M$.  In summary, if we have multiple mirrors $\mathcal W_1, ..., \mathcal W_r$ that arise from various choices of complex structure on $\mathcal M$ or mirror constructions, then we expect that these mirrors have equivalent derived categories
\[
\op{Fuk}(\mathcal{M}) \cong  \dbcoh{\mathcal{W}_1} \cong ... \cong  \dbcoh{\mathcal{W}_r}.
\]

In this paper, we prove that this is precisely the case for Berglund-H\"ubsch-Krawitz mirrors in Gorenstein Fano toric varieties.  The proof utilizes the fact that a variation of complex/algebraic structure is mirrored by a variation of K\"ahler structure specifically realized through variation of Geometric Invariant Theory quotients.  Morally, this allows us to apply the work of Ballard-Favero-Katzarkov \cite{BFK12} to obtain the desired derived equivalence.  However, a modification using partial compactifications of toric vector bundles is necessary to realize this to fruition. We will now provide a more precise mathematical explanation of our results.

\subsection{Precise Results}

Let us fix once and for all, $\kappa$, an algebraically closed field of characteristic $0$.  We work strictly over such a field.

The context of BHK mirror symmetry consists of taking a polynomial
$$
F_A : = \sum_{i=0}^n \prod_{j=0}^n x_j^{a_{ij}}
$$
where the matrix $A:=(a_{ij})$ is invertible and the polynomial $F_A$ cuts out a quasismooth Calabi-Yau hypersurface in some weighted-projective stack $\P(q_0,\ldots, q_n)$. Then one takes a group $G$ that is a subset of the group of diagonal automorphisms
$$
\op{Aut}(F_A) = \{ (\lambda_i) \in (\gm)^{n+1} | F_A(\lambda_i x_i) = F_A(x_i)\}
$$
so that $G$ acts trivially on holomorphic $(n,0)$ forms of $Z(F_A)$. We take the quotient stack
$$
Z_{A,G} = \left[ \frac{\{F_A = 0\} }{G \gm}\right] \subseteq \left[\frac{\A^{n+1} \setminus\{0\}}{G\gm}\right] = \frac{\P(q_0, \ldots, q_n) }{\overline{G}}
$$
where $\gm$ acts with weights $q_0, ..., q_n$ and $\overline{G} := G / (G \cap \gm)$. BHK mirror symmetry proposes a mirror that is associated to the transposed polynomial
$$
F_{A^T} : = \sum_{i=0}^n \prod_{j=0}^n x_j^{a_{ji}}.
$$
The polynomial $F_{A^T}$ cuts out a quasismooth Calabi-Yau hypersurface in another weighted-projective stack $\P(r_0, \ldots, r_n)$. Krawitz \cite{Kr09} identified the dual group $G^T_A$ (see Equation~\eqref{DualGroup}) which depends on both $G$ and $A$ so that one can state the BHK mirror to be,
$$
Z_{A^T, G^T} : = \left[ \frac{\{F_{A^T} = 0\} }{G^T_A \gm}\right] \subseteq \left[\frac{\A^{n+1} \setminus\{0\}}{G^T_A \gm}\right] = \frac{\P(r_0, \ldots, r_n) }{\overline{G^T_A}}.
$$
Chiodo and Ruan \cite{CR11} proved the following.
\begin{theorem}[Chiodo-Ruan]
On the level of Chen-Ruan cohomology, the Hodge diamonds for $Z_{A,G}$ and its BHK mirror $Z_{A^T, G^T}$ flip:
$$
H^{p,q}_{\text{CR}} (Z_{A,G}, k) \cong H^{n-1-p,q}_{\text{CR}}(Z_{A^T,G^T_A}, k).
$$
\end{theorem}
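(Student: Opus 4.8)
The plan is to compute both sides through the standard twisted-sector decomposition of Chen--Ruan cohomology and then to reduce the comparison to a combinatorial duality on the underlying Landau--Ginzburg data. Recall that for a global quotient $[Y/H]$ with $Y$ smooth and $H$ finite one has
\[
H^{p,q}_{\mathrm{CR}}([Y/H]) = \bigoplus_{[h]} H^{p-\iota(h),\, q-\iota(h)}\!\left(Y^{h}/C(h)\right),
\]
summing over conjugacy classes $[h]$, with $\iota(h)$ the age. Applying this to $Z_{A,G}$, each fixed locus $Z(F_A)^{g}$ is, inside the coordinate subspace of $g$-fixed variables, again a quasismooth hypersurface of Berglund--H\"ubsch type, cut out by the sub-sum of $F_A$ in those variables, and quotiented by the induced action of $G$. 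By the Griffiths--Dolgachev--Steenbrink residue description, the primitive cohomology of each such hypersurface is identified with an explicit graded piece of the Milnor (Jacobian) ring of the restricted polynomial, taken $G$-invariant, the grading being governed by the weights $q_i$; the Gorenstein hypothesis is precisely what forces the age shifts $\iota(g)$ to be integral and ensures this residue description accounts for the entire contribution.

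The second step is bookkeeping. One repackages the resulting direct sum as a bigraded vector space with a basis of pairs $(g,m)$, where $g$ ranges over $G$ and $m$ is a monomial representing a class in the $g$-fixed Milnor ring, subject to the $G$-invariance condition, and where the bidegree $(p,q)$ of $(g,m)$ is given by a closed formula in the logarithms of $g$, the exponent vector of $m$, and the matrix $A$ (with the contribution of $\iota(g)$ built in, and the exponential grading element $J_{F_A}$ playing a bookkeeping role). The identical procedure applied to $Z_{A^T,G^T}$ produces a bigraded space with basis $(h,\ell)$ for $F_{A^T}$, where --- by Krawitz's definition of $G^T$ --- the group that appears is exactly the transpose group $G^T$.

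The heart of the matter is the third step: an explicit bidegree-reversing bijection between the two index sets. One sends a monomial $m=\prod_j x_j^{m_j}$ for $F_A$ to the group element of $F_{A^T}$ whose vector of logarithms is $A^{-1}(m+\mathbf{1})$, and symmetrically a group element $g$ of $F_A$ with logarithm vector $\ell$ to the monomial for $F_{A^T}$ with exponent vector $A^{T}\ell-\mathbf{1}$; the invertibility of $A$ --- equivalently of $A^{T}$, with $(A^{T})^{-1}=(A^{-1})^{T}$ --- is exactly what makes these assignments well defined and mutually inverse. A direct computation with the age and residue-degree formulas then shows that the roles of ``age'' and ``residue degree'' are interchanged across the mirror, so that the first degrees of $(g,m)$ and its image add up to $n-1$ while the second degrees agree; and the $G$-invariance condition on one side matches the $G^T$-invariance condition on the other --- again this is exactly how $G^T$ is engineered. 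Assembling these identifications yields the claimed isomorphism $H^{p,q}_{\mathrm{CR}}(Z_{A,G})\cong H^{n-1-p,q}_{\mathrm{CR}}(Z_{A^T,G^T})$.

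I expect the main obstacle to be this third step, and specifically the verification that the combinatorial bijection reverses the first grading in the stated way. One must keep careful track of which sectors are ``narrow'' and which are ``broad'' on each side and confirm that the bijection interchanges their contributions, while accounting for the shifts introduced by the exponential grading element $J$ and by the Calabi--Yau/Gorenstein normalization of the weights. By contrast, the first two steps are essentially formal once the residue-theoretic description of the cohomology of Fermat--Berglund--H\"ubsch orbifold hypersurfaces is in place.
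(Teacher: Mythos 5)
This theorem is not proved in the paper at all: it is quoted as a result of Chiodo and Ruan (\cite{CR11}), so there is no in-paper argument to compare against, and your proposal has to be measured against the known proof in the literature. Your outline has the right overall shape --- twisted-sector decomposition, Griffiths--Dolgachev--Steenbrink residue description of each fixed-locus hypersurface, and Krawitz's exponent/logarithm exchange via $A^{-1}$ with the shift by $\mathbf{1}$ --- and the bijection you write down in the third step is essentially Krawitz's mirror map.

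However, you have the location of the difficulty inverted, and that is a genuine gap. You declare the first two steps ``essentially formal,'' but the identification of $H^{*,*}_{\mathrm{CR}}(Z_{A,G})$ with the combinatorial state space indexed by pairs $(g,m)$ is the substantive content of Chiodo--Ruan's paper: in the broad sectors the fixed locus $Z(F_A)^g$ is a positive-dimensional quasismooth hypersurface, its primitive cohomology spreads over many bidegrees of the restricted Jacobian ring, non-primitive (ambient) classes must also be accounted for, and matching all of this against the corresponding sectors of the mirror is exactly what forces Chiodo and Ruan to route the argument through the LG/CY state-space isomorphism with FJRW theory rather than compare the two geometric sides directly. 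By contrast, once both sides are expressed combinatorially, the degree-reversing bijection (your step three, which you flag as the obstacle) is Krawitz's theorem and is the comparatively routine part. A second, smaller error: the integrality of the age shifts $\iota(g)$ has nothing to do with any Gorenstein hypothesis --- the theorem carries no such hypothesis, and the paper's Example \ref{NonGorMirror} shows BHK mirrors routinely live in non-Gorenstein ambient toric varieties. What makes the ages integral is the condition $G \subseteq SL(F_A)$ (determinant one), i.e.\ $\sum_i \log \lambda_i \in \Z$ for every $g = (\lambda_i) \in G$. As written, your proof would not close without importing the state-space isomorphism of \cite{CR11} wholesale.
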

This is the analogous result to that of Batyrev and Borisov for their construction. One can ask how this construction compares to the mirror construction of Batyrev for hypersurfaces of Fano toric varieties. The answer is that the mirror construction matches if and only if the polynomial $F_A$ is a Fermat variety in a (necessarily Gorenstein) Fano toric variety. In fact, if one starts with a non-diagonal polynomial $F_A$ sitting in a (possibly Fano) toric variety, very often one gets a BHK mirror $Z_{A^T, G^T_A}$ that is in a non-Gorenstein (and consequently non-Fano) toric variety (see Example \ref{NonGorMirror}).  Such a BHK mirror $Z_{A^T, G^T_A}$ does not have a mirror prescribed by Batyrev and Borisov, and consequently does not match up to the varieties prescribed to be the Batyrev mirror.  

We can also consider two polynomials $F_A$ and $F_{A'}$ that have the same weights. We can then consider a group $G\subset \op{Aut}(F_A)\cap \op{Aut}(F_{A'})$ that acts trivially on the holomorphic forms of both hypersurfaces $Z(F_A)$ and $Z(F_{A'})$ and consequently get two quotient stacks $Z_{A, G}$ and $Z_{A', G}$ in the same toric variety; however, their respective BHK mirrors $Z_{A^T, G_A^T}$ and $Z_{(A')^T, G_{A'}^T}$ may be in completely different toric varieties.  This leads to the following question of Iritani:

\begin{question}[Iritani]
Given two quotient stacks $Z_{A,G}$ and $Z_{A', G}$ that sit in the same toric variety, are their BHK mirrors $Z_{A^T, G^T_A}$ and $Z_{(A')^T, G^T_{A'}}$ birationally equivalent?
\end{question}

This question is answered affirmatively in many ways in the literature by Borisov \cite{Bor13}, Shoemaker \cite{Sho14}, Kelly \cite{Kel13}, and Clarke \cite{Cla13}. 
In this paper, we prove that these mirrors are the same from the perspective of homological mirror symmetry.

\begin{theorem}[=Theorem \ref{thm: BHK main result}]\label{introBHK}
Given two quotient stacks $Z_{A,G}$ and $Z_{A', G}$ that sit in the same Gorenstein Fano toric variety, their BHK mirrors $Z_{A^T, G^T}$ and $Z_{(A')^T, G^T_{A'}}$ are derived equivalent.
\end{theorem}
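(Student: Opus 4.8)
The strategy is to upgrade the birationality theorem of Borisov--Clarke--Kelly--Shoemaker to a derived equivalence by passing to gauged Landau--Ginzburg models and running a variation-of-GIT argument. First I would give each BHK mirror an LG presentation: for the invertible polynomial $F_{A^T}$ of degree $d=\sum r_i$ cutting out the quasismooth Calabi--Yau $Z_{A^T,G^T}\subset\mathbb{P}(r_0,\dots,r_n)/\overline{G^T}$, Orlov's theorem in its equivariant/stacky form (Isik, Shipman, Hirano, Ballard--Favero--Katzarkov) identifies $\dbcoh{Z_{A^T,G^T}}$ with the absolute derived category of equivariant matrix factorizations of the gauged LG model $\bigl([\A^{n+2}/\Gamma],\,W=u\cdot F_{A^T}\bigr)$, where $u$ is an auxiliary coordinate of weight $-d$ and $\Gamma$ is the reductive group generated by $G^T$, the dilation torus with weight vector $(r_0,\dots,r_n,-d)$, and a further $\gm$ of R-charge under which $W$ is homogeneous, with the GIT linearization picking out the geometric phase. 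The same applies to $F_{(A')^T}$.

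Next I would place both LG models inside a single master GLSM. Because $Z_{A,G}$ and $Z_{A',G}$ sit in the same Gorenstein toric variety, the combinatorics underlying the BHK construction (the loop/non-degeneracy data of $A$ and $A'$) produces, via Borisov--Clarke--Kelly--Shoemaker, a toric birational map between $Z_{A^T,G^T}$ and $Z_{(A')^T,G^T_{A'}}$; since both are Calabi--Yau this map is crepant, and one can realize it through an ambient toric stack $[V/H]$ carrying an $H$-semi-invariant anticanonical section whose two geometric GIT chambers produce the two mirrors, connected by a path in the space of stability conditions that crosses finitely many walls (equivalently, factor the toric birational map through the toric MMP into elementary flips and flops that are crepant for the hypersurface). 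The Gorenstein hypothesis is exactly what guarantees that such a common toric home, with an honest anticanonical family, exists.

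The heart of the argument is that crossing a single wall induces an equivalence. At a wall with defining one-parameter subgroup $\lambda$ and fixed locus $Z$, decompose the conormal directions by their $\lambda$-weights to form the window numbers $\mu_+,\mu_-$; the VGIT theorem of Ballard--Favero--Katzarkov then gives a semiorthogonal decomposition of the matrix-factorization category on one side of the wall consisting of the matrix-factorization category on the other side together with $|\mu_+-\mu_-|$ exceptional blocks supported on $Z$. The key computation is that the auxiliary coordinate $u$ of weight $-d$ contributes to $\mu_\pm$ precisely the correction for which the Calabi--Yau/Gorenstein relation — adjunction, i.e. the hypersurface degree equals the anticanonical degree so that $K_{Z_{A^T,G^T}}$ is trivial — forces $\mu_+=\mu_-$; hence there are no exceptional blocks and the grade-restriction/window functor is an equivalence. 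Composing these equivalences along the path of the previous paragraph, together with the LG presentations at the two endpoints, yields $\dbcoh{Z_{A^T,G^T}}\cong\dbcoh{Z_{(A')^T,G^T_{A'}}}$, which is the assertion of the corollary.

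I expect the main obstacle to be the middle step: arranging that the purely combinatorial birational maps of Borisov--Clarke--Kelly--Shoemaker literally appear as VGIT wall-crossings of a gauged LG model compatible with the potential $W=uF$, and then verifying the balancing identity $\mu_+=\mu_-$ uniformly at every wall. This requires careful bookkeeping of the finite group $G^T$ — so one works throughout with a reductive $\Gamma$ and Deligne--Mumford quotient stacks rather than ordinary toric varieties — and precise tracking of how the degree-$d$ coordinate $u$ interacts with the normal-bundle weights at each fixed locus, the Calabi--Yau condition being exactly the numerical coincidence that makes every wall-crossing balanced.
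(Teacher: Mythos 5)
Your skeleton --- present each mirror as a gauged LG model via Isik--Shipman--Hirano, put both models into one master GLSM, and cross walls that the Calabi--Yau condition renders balanced so that no window blocks appear --- is exactly the paper's strategy (Theorem~\ref{thm: isik-shipman}, Theorem~\ref{thm: HW}, Corollary~\ref{cor: derivedequiv}). But the step you yourself flag as "the main obstacle" is genuinely missing, and it is where most of the work lives. You never say how the finite chain of wall-crossings is produced. The paper does not factor the birational map through the toric MMP; it uses the Kreuzer--Skarke classification of invertible polynomials into Fermat/loop/chain atoms and introduces \emph{cleaves}, elementary moves deleting one arrow of the diagram. The Gorenstein hypothesis enters precisely here (Proposition~\ref{KSCleaveProp}): reflexivity of the anticanonical polytope guarantees that each pure Fermat monomial $x_i^{d/q_i}$ is an anticanonical section, so every invariant Kreuzer--Skarke polynomial is connected to the common Fermat polynomial by cleaves; this is how $F_A$ and $F_{A'}$ get joined. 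Each cleave adds exactly one lattice point, giving a configuration of $n+3$ points with two distinguished regular triangulations (Lemma~\ref{lem: KS triangulations}), i.e.\ two chambers of one GKZ fan --- your ``master GLSM,'' but built one elementary step at a time so that the wall-crossing is actually exhibited.

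The second, more serious gap: in this master GLSM the two GIT phases are \emph{not} the total spaces of the canonical bundles over the two weighted projective stacks; they are partial compactifications (the extra variable $y_k'$, resp.\ $y_k$, does not disappear, and the triangulations contain extra simplices $\mathcal S$, $\mathcal S'$ beyond the cone structure $\mathcal C$, $\mathcal C'$ of the canonical bundles). So the LG/CY correspondence does not apply directly in each phase. One must first localize the singularity category to the open substack that is an honest total space (Proposition~\ref{prop: local singularity}, Corollary~\ref{cor: compact RG-flow}), and this is legitimate only if the critical locus of $w$ avoids the extra strata, i.e.\ $\I_p \subseteq \sqrt{\partial w, \J_p}$ and likewise for $q$. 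Verifying this containment is the technical core of the paper (Lemma~\ref{IpInRadicalIdealLemma}): it is an explicit induction along the chain or loop into which the deleted arrow feeds, and it is exactly the source of the nonvanishing hypotheses $b_i \neq 0$ for $i \in I$ in Theorem~\ref{BHKDEQUIV}. Your window computation $\mu_+=\mu_-$ handles the wall-crossing itself but not this identification of each phase with the geometry you want; without it the argument does not close.
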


By joining this theorem with the main theorem of \cite{FK14}, we can say the following:  given a Calabi-Yau complete intersection or hypersurface in a Gorenstein Fano toric variety, there may be various distinct ways to construct its mirror using Berglund-H\"ubsch-Krawitz or Batyrev-Borisov mirrors, but all of these mirrors are derived equivalent.

Moreover, when proving Theorem \ref{introBHK}, one gets derived equivalences amongst members of families of hypersurfaces in the different weighted-projective stacks. A priori, Berglund and H\"ubsch proposed their mirror duality as a construction for specific Calabi-Yau hypersurfaces.  We can insert these specfic hypersurfaces into a family and explicitly match each member of this extended family of Calabi-Yau varieties to one another pointwise by derived equivalence.

The most basic extension to families allows one to apply Polishchuk-Zaslow, Seidel, and Sheridan's proof of Homological Mirror Symmetry for Calabi-Yau hypersurfaces in projective space \cite{PZ, Sei03, Sheridan}.  Since the Polishchuk-Zaslow result (dimension $1$) is analogous but slightly different to state, we treat the cases of Seidel (dimension $2$) and Sheridan (dimension $\geq 3$) which one can do simultaneously.

Namely, let $\Lambda$ be the universal Novikov field which contains $\C[[r]] \subseteq \Lambda$ so that $r$ is a formal parameter.
Over the universal Novikov field, we define a \newterm{Berglund-H\"ubsch-Krawitz pencil} as
$$
Z_{A, G}^{\op{pencil}} : = \left[ \frac{\{x_0 ... x_n + rF_{A} = 0\} }{G \gm}\right] \subseteq \left[\frac{\A^{n+1} \setminus\{0\}}{G \gm}\right] = \frac{\P(q_0, \ldots, q_n) }{\overline{G}}
$$
where
\[
\P(r_0,\ldots, r_n) := [\mathbb A^{n+1} \backslash \{0\} / \gm]
\]
is a \newterm{weighted projective stack}.
For Berglund-H\"ubsch-Krawitz pencils we have the following.
\begin{theorem}[=Theorem~\ref{HMS}]
\label{introHMS}
Homological Mirror Symmetry holds for Berglund-H\"ubsch-Krawitz mirror pencils in projective space over the universal Novikov field.

  More precisely, if $F_A$ defines a smooth hypersurface in complex projective space $\C\P^n$ (in particular $G=\Z_{n+1}$)   with $n \geq 3$, there is an equivalence of triangulated categories
\[
\op{Fuk}{Z_{A, G}} \cong \dbcoh{Z_{A^T, G^T_A}^{\op{pencil}}}.
\]
\end{theorem}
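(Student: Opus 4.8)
The plan is to reduce to the Fermat hypersurface, bootstrap off the Homological Mirror Symmetry theorems of Seidel (in the K3 case, $n=3$) and Sheridan ($n \geq 4$), and then transport the conclusion across the family-level derived equivalences produced in the proof of Theorem~\ref{introBHK}.

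First I would treat the A-model side. Because $G = \Z_{n+1} \subseteq \gm$, the group $\overline{G}$ is trivial and $Z_{A,G}$ is just the smooth projective Calabi--Yau hypersurface $M_A := \{F_A = 0\} \subseteq \C\P^n$ carrying no residual orbifold structure. The locus of smooth degree-$(n+1)$ hypersurfaces in $\C\P^n$ is connected, so $M_A$ is joined to the Fermat hypersurface $M_{A_0}$, $A_0 := (n+1)\id$, by a path of smooth hypersurfaces; Moser's argument applied to the corresponding fibre bundle (the restricted Fubini--Study forms stay in a fixed cohomology class along the path) shows $M_A$ and $M_{A_0}$ are symplectomorphic. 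Since $\op{Fuk}(-)$ over $\Lambda$ is a symplectomorphism invariant, this gives an $A_\infty$-equivalence $\op{Fuk}(Z_{A,G}) \cong \op{Fuk}(M_{A_0})$.

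Second, I would match the Fermat case with the Seidel--Sheridan theorem. We have $A_0^T = A_0$, and the Krawitz dual of $G = \langle J_{F_{A_0}} \rangle = \Z_{n+1}$ is the full group of special-linear diagonal symmetries $\aut(F_{A_0}) \cap \op{SL}$, so $Z_{A_0^T, G^T_{A_0}}$ is precisely the Greene--Plesser mirror orbifold and $Z_{A_0^T, G^T_{A_0}}^{\op{pencil}}$ is precisely the one-parameter mirror pencil over $\Lambda$ studied by Seidel and Sheridan --- after matching the Novikov parameter $r$ with their complex-structure parameter via the mirror map. Hence $\op{Fuk}(M_{A_0}) \cong \dbcoh{Z_{A_0^T, G^T_{A_0}}^{\op{pencil}}}$.

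Finally I would transport the equivalence. The hypersurfaces $Z_{A,G}$ and $Z_{A_0,G}$ both sit in the smooth (hence Gorenstein) toric variety $\C\P^n$, so Theorem~\ref{introBHK} --- and, what is actually needed, the family version extracted from its proof via variations of GIT, which matches the two anticanonical pencils fibrewise over $\Lambda$ --- yields
\[
\dbcoh{Z_{A_0^T, G^T_{A_0}}^{\op{pencil}}} \cong \dbcoh{Z_{A^T, G^T_A}^{\op{pencil}}}.
\]
Concatenating the three equivalences proves the theorem. I expect the main obstacle to lie in these last two steps: one must check that the BHK pencil associated to the Fermat polynomial is literally the mirror pencil used by Seidel and Sheridan (reconciling their crepant/orbifold model, their symmetry group, and their Novikov parameter with the BHK conventions), and that the GIT wall-crossing of Theorem~\ref{introBHK} genuinely induces an equivalence of the total spaces of the pencils over $\Lambda$, compatibly with $r$, and not merely of the fibres over a single point.
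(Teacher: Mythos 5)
Your proposal is correct and follows essentially the same route as the paper: Moser's theorem to identify $\op{Fuk}(Z_{A,G})$ with the Fukaya category of the Fermat hypersurface, Seidel (for $n=3$) and Sheridan (for $n\geq 4$) for the Fermat case, and the generalized BHK pencil equivalence (Corollary~\ref{BHKDEQUIVGORFAM} with $b_i=1$, $c=r$, $\kappa=\Lambda$) to transport to $Z_{A^T,G^T_A}^{\op{pencil}}$. The only simplification relative to your worry in the last paragraph is that the paper treats the pencil as a single stack over the algebraically closed field $\Lambda$ with $r\in\Lambda$ a field element, so the wall-crossing result is applied once over $\kappa=\Lambda$ rather than fibrewise over a base.
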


\subsection{Plan of the Paper}
Here is a brief summary of how the paper is organized.

In Section~\ref{sec: Background}, we outline BHK mirror symmetry, give a toric reinterpretation due to Borisov and Shoemaker, and define the multiple mirrors that we will prove are derived equivalent.

In Section~\ref{sec: categories}, we provide background on the category of singularities and in particular the theorems of Orlov, Isik, and Shipman which we will use.

 In Section~\ref{sec: algebraic}, we prove criteria for derived equivalences for complete intersections that are zero loci of sections of different vector bundles. This is placed in the context of equivalences of categories of singularities amongst various partial compactifications of vector bundles, and we show how the latter follows from some recent results on variations of GIT quotients. 

 In Section~\ref{sec: BHK}, we apply our framework to prove the derived analogue to the birationality result of Borisov, Clarke, Shoemaker, and the second-named author on BHK mirrors. We then discuss this in an explicit example.

\vspace{2.5mm}
\noindent \textbf{Acknowledgments:}
We heartily thank Colin Diemer for suggesting that VGIT may relate to the BHK picture and give special thanks to Charles Doran for input on this project from start to finish.  The first-named author is grateful to the Korean Institute for Advanced Study for their hospitality while this document was being prepared and especially to Bumsig Kim for insightful conversations on Clarke's mirror construction.   The second-named author thanks the Pacific Institute for the Mathematical Sciences for its hospitality in his visits as they expedited the progress of this work.
This project was also greatly aided by stimulating conversations and suggestions from many great mathematicians; Matthew Ballard, Ionut Ciocan-Fontanine, Ron Donagi, Daniel Halpern-Leistner, Yuki Hirano, and Xenia de la Ossa.  The authors would also like to extend their gratitude to the referees for their thorough readings and many improvements to  this manuscript.

The first-named author is grateful to the Natural Sciences and Engineering Research Council of Canada for support provided by a Canada Research Chair and Discovery Grant (CRC TIER2 229953 and RGPIN 04596). The second-named author acknowledges that this paper is based upon work supported by the National Science Foundation under Award No. DMS-1401446 and the Engineering and Physical Sciences Research Council under Grant EP/N004922/1.

 \vspace{2.5mm}

 \section{Background}
 \label{sec: Background}

 \subsection{Berglund-H\"ubsch-Krawitz Mirror Symmetry}

 Let
 \begin{equation}
 F_A = \sum_{i=0}^n \prod_{j=0}^n x_j^{a_{ij}}, \quad a_{ij} \geq 0
\label{eq: FA}
 \end{equation}
 be a polynomial equation that is the sum of $n+1$ monomials in $n+1$ variables and set the matrix $A : = (a_{ij})_{i,j=0}^n$.
  We impose the following conditions:

 \begin{definition}\label{KSPoly}
 The polynomial $F_A$ above is  a \newterm{Kreuzer-Skarke} polynomial if:
 \begin{enumerate}[a)]
 \item the matrix $A$ is invertible over $\Q$;
 \item there exists positive integers $q_i$ so that the sum $\sum_j q_j a_{ij}$ is independent of $i$; and
 \item when viewed as a polynomial map, $F_A: \A^{n+1} \rightarrow \A$ has exactly one critical point.
 \end{enumerate}
 \end{definition}

\begin{remark}
 These conditions are restrictive.  Their classification is discussed in Section~\ref{sec: KS cleaves}.
 \end{remark}

 We then can look at the well-defined hypersurface in a weighted projective stack that is cut out by the polynomial $F_A$,
 $$
 Z_A : = \left\{ F_A = 0\right\} \subseteq \P{(q_0, \ldots, q_n)}.
 $$
Condition (b) implies that the hypersurface is well-defined in this weighted projective space and condition (c) implies that the hypersurface is quasismooth. We further impose the condition that $Z_A$ is Calabi-Yau. This is equivalent to the condition that the degree of the polynomial $F_A$ is the sum of the weights $\sum_i q_i$. This in turn is equivalent to the condition that the sum of the entries in the inverse matrix $A^{-1}$ is one, i.e., $\sum_{i,j} (A^{-1})_{ij} = 1$. If we want that the hypersurface $Z_A$ to be a generalized Calabi-Yau, i.e., a Fano variety with a middle Hodge structure similar to that of a Calabi-Yau manifold (see Definition 2.2 of \cite{FIK}), then we merely desire that the sum of the entries of the inverse matrix $A^{-1}$ sums to an integer.

These hypersurfaces are highly symmetric. If we take the the torus $(\gm)^{n+1} $ acting coordinatewise on $\P(q_0, \ldots, q_n)$, we can describe many subgroups of the torus that represent certain symmetries of the polynomial $F_A$ and the hypersurface $Z_A$. Consider the group $\aut_{\diag}(F_A)$ of diagonal symmetries rescaling the coordinates and preserving $F_A$,
\begin{equation}
\aut_{\diag}(F_A) = \left\{ (\lambda_i) \in (\mathbb{G}_m)^{n+1} \middle| \ F_A(\lambda_ix_i) = F_A(x_i) \right\}.
\end{equation}
This group is generated by the elements $\rho_j = (\exp(2\pi i a^{j0}), \ldots, \exp(2\pi i a^{jn}))$, where $a^{ij} := (A^{-1})_{ij}$.

In the case where $Z_A$ is a Calabi-Yau variety, not all the elements in the group of diagonal symmetries leave the unique (up to scaling) holomorphic form invariant, hence we define a subgroup,
\begin{equation}
SL(F_A) = \left\{ (\lambda_i) \in \aut_{\diag}(F_A)  \middle| \ \prod_i \lambda_i = 1\right\}
\end{equation}
of elements that, when viewed a diagonal matrix acting on the coordinates $x_i$ has determinant one.

Some of these symmetries of $F_A$ act trivially on the hypersurface $Z_A$. In particular, one has the \newterm{exponential grading operator} subgroup,
$$
J_{F_A} = \langle \rho_0\cdots \rho_n\rangle \subseteq \aut_{\diag}(F_A),
$$
which acts trivially on the hypersurface $Z_A$.  Take a group $G$ so that
\begin{equation}\label{groupGdef}
J_{F_A} \subseteq G \subseteq SL(F_A)
\end{equation}
 and denote by $\overline G$ the quotient $G/ J_{F_A}$. If we start with a Calabi-Yau hypersurface $Z_A$, when we quotient by $\overline G$ we get a Calabi-Yau orbifold $Z_{A,G} : = [Z_A / \overline G]$. Alternatively, we may view this as a (smooth) Deligne-Mumford global quotient stack,
\begin{equation}
Z_{A,G}= \left[\frac{ \{F_A = 0\}}{G\gm}\right] \subseteq \left[\frac{\A^{n+1} \setminus \{0\} }{G\gm}\right] = \frac{\P(q_0,\ldots, q_n)}{\overline{G}}.
\end{equation}

 Berglund-H\"ubsch-Krawitz mirror symmetry provides a mirror for this orbifold in the following way. We define the transposed polynomial,
 \begin{equation}
 F_{A^T} = \sum_{i=0}^n \prod_{j=0}^n x_j^{a_{ji}},
 \label{eq: FAT}
 \end{equation}
and the transposed group,
\begin{equation}\label{DualGroup}
G^T_A = \left\{ \prod_j (\rho_j^T)^{s_j} \middle| \ \text{ $\prod_j x_j^{s_j}$ is $G$-invariant}\right\},
\end{equation}
where  $\rho_j^T : = ((\exp(2\pi i a^{0j}), \ldots, \exp(2\pi i a^{nj}))$. For a more functorial description of $G^T_A$, we refer the interested reader to Definition 2.7 of \cite{Cla13}. Provided $F_A$ and $G$ above, we enjoy the following properties about their transposed counterparts:
\begin{enumerate}[i.]
\item $F_{A^T}$ is a Kreuzer-Skarke polynomial, but with possibly different weights $r_i$.
\item If $J_{F_A} \subseteq G$, then $G^T_A \subseteq SL(F_{A^T})$.
\item If $G \subseteq SL(F_A)$, then $J_{F_{A^T}} \subseteq G^T_A$.
\item The hypersurface $Z_{A^T} : = \{ F_{A^T} = 0\} \subseteq \P(r_0, \ldots, r_n)$ is (Fano) Calabi-Yau if $Z_A$ is (Fano) Calabi-Yau.
\end{enumerate}

Denote by $\overline{G^T_A}$ the quotient $G^T_A / J_{F_{A^T}}$. If we start with a Calabi-Yau hypersurface $Z_A$ and a group $G$ so that $J_{F_{A^T}} \subseteq G \subseteq SL(F_A)$, we obtain the quotient stack,
\begin{equation}
Z_{A^T, G^T} =\left[\frac{\{F_{A^T} = 0\}}{G^T_A \gm}\right] \subseteq \left[ \frac{ \A^{n+1} \setminus \{0\} }{G^T_A\gm}\right] = \frac{\P(r_0,\ldots, r_n)}{\overline{ G^T_A}},
\end{equation}
that is also a Calabi-Yau orbifold.

\begin{example}\label{FermatQuintic}
If one takes $A$ to be the $5\times 5$ diagonal matrix, $A=5I_5$, then one gets the Fermat polynomial $F_{5I_5} = x_0^5+x_1^5 + x_2^5+x_3^5+x_4^5$ which carves out the Fermat hypersurface $X_{5I_5} \subseteq \P^4$. Take the group $G$ to be the exponential grading operator $J_{F_{5I_5}}$ so that we are looking at the Fermat quintic threefold $Z_{A,G} = X_{5I_5}$. BHK mirror symmetry predicts the mirror $Z_{A^T, G^T_A} = X_{5I_5} / (\Z_5)^3 \subseteq \P^4 / (\Z_5)^3$ where the $(\Z_5)^3$ acts coordinatewise by the generators $(\zeta, \zeta^{-1}, 1,1,1)$, $(\zeta, 1, \zeta^{-1}, 1,1)$, and $(\zeta,1,1,\zeta^{-1},1)$ where $\zeta$ is a primitive fifth root of unity. This is the same mirror hypersurface that is predicted by Greene-Plesser and Batyrev.
\end{example}

\begin{example}\label{NonGorMirror}
Suppose one takes $A'$ to be the matrix of exponents for the polynomial $F_{A'} = x_0^4x_1 +x_1^4x_2 + x_2^4x_3 + x_3^4x_4 + x_4^5$, which carves out a quintic hypersurface $Z_{A'} \subseteq  \P^4$. As before, take the group $G$ to be the exponential grading operator. In this case, $\overline{G}_{A'}^T = \{\text{id}\}$ and BHK mirror symmetry predicts the mirror $Z_{(A')^T, G^T_{A'}} = Z(y_0^4 + y_0y_1^4+y_1y_2^4 + y_2y_3^4 + y_3y_4^5) \subseteq \P^4(64,48,52,51,41)$.  The hypersurface $Z_{A^T, G^T_A}$ is not predicted by Greene-Plesser or Batyrev, rather, it does not sit in a Gorenstein Fano toric variety. Hypersurfaces in non-Gorenstein toric varieties do not have mirror constructions due to any of the naturally toric mirror constructions created. For more examples of BHK mirrors to projective hypersurfaces, consult Tables 5.1-3 of \cite{DG11}.
\end{example}

The mirrors in Examples \ref{FermatQuintic} and \ref{NonGorMirror} do not have an obvious relation.
The BHK mirror construction does not predict the same mirror for two (symplectomorphic) hypersurfaces $Z_{A,G}$ and $Z_{A', G}$ that sit in the same toric variety. However, the question of if $Z_{A^T, G^T_A}$ and $Z_{(A')^T, G^T_{A'}}$ are birational has been well-studied recently by many approaches. The theorem below states a relevant amalgamation of these results (which is not described in full generality):

 \begin{theorem}[\cite{Bor13, Sho14, Kel13, Cla13}]
Take two polynomials $F_A$ and $F_{A'}$ as above so that the Calabi-Yau hypersurfaces $Z_A$ and $Z_{A'}$ are hypersurfaces in the same weighted projective space $\P(q_0, \ldots, q_n)/G$ where $J_{F_A} = J_{F_{A'}} \subseteq G  \subseteq SL(F_A) \cap SL(F_{A'})$.  One then has two CY orbifolds $Z_{A,G}$ and $Z_{A',G}$ as hypersurfaces in the orbifold $\P(q_0, \ldots, q_n) / (G/ J_{F_A})$. The BHK mirrors $Z_{A^T,G^T_A}$ and $Z_{(A')^T, G^T_{A'}}$ are birational.
\label{thm: birationality BHK}
\end{theorem}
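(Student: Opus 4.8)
The plan is to prove birationality by showing that, after passing to dense open substacks, both BHK mirrors become ``the same'' finite abelian cover of one fixed rational variety that is built only out of the shared data $(q_0,\dots,q_n)$ and $G$. I would organize this around the Shioda-map description (cf.\ Kelly~\cite{Kel13}) together with the toric reinterpretation of Borisov and Shoemaker recalled in Section~\ref{sec: Background}. Since a birational equivalence only sees dense opens, it suffices to work inside the big torus: inside $Z_{A^T,G^T_A}$ lies the dense open substack
\[
U_A:=\left[\ \{F_{A^T}=0\}\cap(\gm)^{n+1}\ \big/\ (G^T_A\cdot\gm)\ \right],
\]
and similarly $U_{A'}\subseteq Z_{(A')^T,G^T_{A'}}$, and it is enough to identify $U_A$ with $U_{A'}$ up to shrinking.

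The key tool is the monomial substitution $u_i:=\prod_j x_j^{a_{ji}}$, $i=0,\dots,n$. Being induced by the invertible matrix $A$, it defines an isogeny $\psi_A$ of the big torus whose (finite) kernel is exactly $\aut_{\diag}(F_{A^T})$; note $G^T_A\subseteq SL(F_{A^T})\subseteq\aut_{\diag}(F_{A^T})$. Under $\psi_A$, the mirror polynomial $F_{A^T}=\sum_i\prod_j x_j^{a_{ji}}$ is pulled back from the Fermat-type form $u_0+\cdots+u_n$, and the $\gm$ acting by the mirror weights $(r_j)$ is carried onto the diagonal $\gm$ of the $u$-torus (this uses that $\sum_j a_{ji}r_j$ is independent of $i$). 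Since moreover $\gm\cap\aut_{\diag}(F_{A^T})=J_{F_{A^T}}\subseteq G^T_A$, the stack $U_A$ is identified with a finite abelian cover, with Galois group $\Gamma_A\cong\aut_{\diag}(F_{A^T})/G^T_A$ of order $|G|$, of the fixed variety
\[
V:=\left[\ \{u_0+\cdots+u_n=0\}\cap(\gm)^{n+1}\ \big/\ \gm\ \right]
\]
(quotient by the diagonal $\gm$), which is a dense open subvariety of $\mathbb{P}^{n-1}$ manifestly independent of $A$.

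It remains to check that the cover $U_A\to V$ --- both its total space and its $\Gamma_A$-action --- depends only on $(q_0,\dots,q_n)$ and $G$, and I expect this to be the main obstacle. The point is that $U_A$, realized as the hypersurface $\{1+u_1/u_0+\cdots+u_n/u_0=0\}$ in the torus $(\gm)^{n+1}/G^T_A$, is governed by the lattice of $G^T_A$-invariant characters together with the mirror grading; using Krawitz's duality --- that $G^T_A$ is the annihilator of $G$ under the perfect pairing $\aut_{\diag}(F_A)\times\aut_{\diag}(F_{A^T})\to\Q/\Z$ --- together with the facts that $A$ and $A'$ share $(q_j)$ and $G$ and satisfy the Calabi--Yau relation $\sum_{i,j}(A^{-1})_{ij}=1$, one transports this lattice datum through $\psi_A$ and finds that it only involves the $G$-invariant characters and the $q$-grading, hence is the same for $A$ and $A'$. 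This is precisely the combinatorial content that the toric reformulation of Section~\ref{sec: Background} isolates: the BHK mirror attached to $F_{A^T}$ is the Calabi--Yau hypersurface in a toric stack built from $(q_0,\dots,q_n)$ and $G$ alone, and $A$ enters only through the choice of Shioda-transformed defining section, which is the single fixed polynomial $u_0+\cdots+u_n$. Once $U_A$ and $U_{A'}$ are so identified, $Z_{A^T,G^T_A}$ and $Z_{(A')^T,G^T_{A'}}$ are birational.

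Two remarks on the difficulty. First, the subtlety above is genuine: neither $G$ nor $G^T_A$ lies inside an ``obvious'' common automorphism group as $A$ varies, so the $A$-independence of the cover really does use the weight constraints, the Calabi--Yau condition and Krawitz's pairing, and is not a mere change of basis --- this is exactly what is established, by four different routes, in \cite{Bor13, Sho14, Kel13, Cla13}. Second, one is comparing Deligne--Mumford stacks, so one should also match the generic inertia on the two sides; this is again controlled by $J_{F_{A^T}}$ sitting inside $G^T_A$ and does not affect the birational conclusion. One could alternatively bypass the explicit Shioda map and, following Borisov and Shoemaker, realize all of these mirrors as hypersurfaces in crepant partial resolutions of a single Gorenstein-type toric stack, where the $A$-independence is visible directly at the level of fans.
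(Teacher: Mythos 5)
Your overall strategy --- push both mirrors forward along the monomial isogeny $\psi_A\colon x\mapsto\bigl(\prod_j x_j^{a_{ji}}\bigr)_i$ and exhibit each as a finite abelian cover of the fixed hyperplane section $V=\{u_0+\cdots+u_n=0\}\cap(\gm)^{n+1}/\gm$ --- is the route of Kelly and Shoemaker and is viable; your local computations (that $\ker\psi_A=\aut_{\diag}(F_{A^T})$, that $F_{A^T}=\psi_A^*(\sum u_i)$, that the $r$-weighted $\gm$ is carried onto the diagonal $\gm$, and that $|\aut_{\diag}(F_{A^T})/G^T_A|=|G|$ by Krawitz's pairing) are all correct. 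The gap is that the one step carrying all the content of the theorem --- that the cover $U_A\to V$ is independent of $A$ --- is asserted rather than proved. Concretely, the cover is classified by the image under $\psi_{A*}$ of the cocharacter lattice $N_A=\Z^{n+1}+A^{-1}S_G$ of $(\gm)^{n+1}/G^T_A$ (where $S_G=\{s\in\Z^{n+1}:\prod x_j^{s_j}\ \text{is $G$-invariant}\}$), and a direct transport gives $\psi_{A*}(N_A)=A^{T}\Z^{n+1}+A^{T}A^{-1}S_G$, which is not visibly independent of $A$ (the factor $A^{T}A^{-1}$ is the identity only for symmetric $A$, e.g.\ Fermat or loops of length $2$). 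Saying that it ``only involves the $G$-invariant characters and the $q$-grading'' is exactly what must be shown, and it requires the precise duality statement --- Borisov's Proposition 2.3.1 together with $N=M^\vee$ and the fact that the overlattices $M,N$ are pinned down by the \emph{fixed} ambient stack $\P(q_0,\ldots,q_n)/\bar G$ --- not just the existence of Krawitz's pairing. (You also implicitly use that the torus part of each mirror is dense, i.e.\ that the mirror hypersurface is irreducible and not contained in the toric boundary; this holds for quasismooth invertible potentials but should be said.)

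The cleanest repair, which is essentially the Borisov--Shoemaker formulation recalled in Section~\ref{sec: Background}, is to avoid pushing forward to the $u$-torus altogether: the mirror torus has cocharacter lattice $M$, which is determined by the fixed ambient data $(q_i,\bar G)$ alone, and on that fixed torus both $Z_{A^T,G^T_A}$ and $Z_{(A')^T,G^T_{A'}}$ are cut out by the \emph{same} Laurent polynomial $\sum_j x^{v_j}$, where the $v_j$ are the (fixed) ray generators of the fan of $\P(q_0,\ldots,q_n)/\bar G$; only the toric compactification depends on $A$, so the two mirrors literally share a dense open substack. Note also that this theorem is stated in the paper as a citation of \cite{Bor13, Sho14, Kel13, Cla13}; to the extent the paper reproves it (Remark~\ref{rem: BHK birational}, in the Gorenstein case), it does so by a genuinely different mechanism from yours: both mirrors arise as the zero locus of one superpotential $w$ on two GIT quotients of a single quotient stack, hence as open substacks of one irreducible component of the critical locus, which gives birationality (and, there, derived equivalence) as a byproduct of the VGIT framework rather than via an isogeny of tori.
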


  In the following sections, we mesh the many approaches to this question with variational geometric invariant theory in order to prove a result more in line with Kontsevich's homological mirror symmetry---derived equivalence.

\subsection{Toric reinterpretation of BHK mirrors}\label{toricreinterpretation}

There have been a few toric reinterpretations of BHK mirror duality in the literature (\cite{Bor13}, \cite{Cla08},  \cite{Sho14}). In this subsection, we will give a brief overview of the framework that we will use and introduce the relevant notation for the BHK mirror construction both in a Landau-Ginzburg and a Calabi-Yau setting.

\begin{caution}
In this section, we use the notation $\overline M$ for a lattice which in \cite{Bor13} is denoted simply by $M$.  The notation $\overline M$ is often used in the literature for $M \oplus \Z^n$.  In our case, we will consider just $\overline M = M \oplus \Z$ in Section~\ref{sec: BHK}. In particular, we will specialize to the setting where $\op{deg} = (0,1)$ and $\overline M / \op{deg} = M$. 
\end{caution}

We start with the setup of \cite {Bor13}.  Take two free abelian groups ${\overline M_0}$ and ${\overline N_0}$ with bases $\{ u_i\}$ and $\{v_i\}$, respectively. Consider a matrix $A=(a_{ij})_{i,j=0}^n$ that is the matrix associated to a Kreuzer-Skarke polynomial as defined in Definition~\ref{KSPoly}.  Now, we define a pairing $\langle,\rangle: {\overline M_0} \times {\overline N_0} \rightarrow \Z$ between the two free abelian groups ${\overline M_0}$ and ${\overline N_0}$ by imposing that $\langle u_i, v_j\rangle= a_{ij}$. Choose overlattices
${\overline M}$ and ${\overline N}$ so that ${\overline M}$ and ${\overline N}$ are dual to one another and we have the following containments:
\begin{equation}\label{overlattices}
{\overline N_0} \subseteq {\overline N} \subseteq {\overline M_0}^\vee; \qquad {\overline M_0} \subseteq {\overline M} \subseteq {\overline N_0}^\vee.
\end{equation}
We then have exact sequences
\begin{equation}\begin{aligned}
0 \rightarrow & {\overline M} \rightarrow {\overline N_0}^\vee \rightarrow {\overline N_0}^\vee / {\overline M} \rightarrow 0;\\
&m \mapsto \langle \overline m , - \rangle,
\end{aligned}\end{equation}
and
\begin{equation}\begin{aligned}
0 \rightarrow & {\overline N} \rightarrow {\overline M_0}^\vee \rightarrow {\overline M_0}^\vee / {\overline N} \rightarrow 0;\\
&\overline n \mapsto \langle -, \overline n \rangle.
\end{aligned}\end{equation}
The first map is the toric divisor map $\div$ for the toric variety $(\kappa \otimes {\overline N_0}) / ({\overline N_0}^\vee / {\overline M})$ with ray generators $v_i$, as it can be written
\begin{equation}
\overline m \mapsto \sum_i \langle \overline m, v_i\rangle v_i ^\vee.
\end{equation}

The second map is the monomial map $\operatorname{mon}$ for the rational function $\sum_i x^{u_i}$ as it can be written
\begin{equation}
\overline n \mapsto \sum_i \langle u_i, \overline n \rangle u_i ^\vee.
\end{equation}
This gives us a pair consisting of a space and a function
\begin{equation}
\left((\kappa \otimes {\overline N_0}) / ({\overline N_0}^\vee / {\overline M}); \sum x_i ^{u_i}\right),
\end{equation}
often referred to as a \newterm{Landau-Ginzburg (LG) model}.

Following Clarke \cite{Cla08}, the mirror LG model is given by swapping ${\overline M}$ and ${\overline N}$ and the maps $\operatorname{mon}$ and $\div$.  Hence, in this setting, the mirror is the pair
\begin{equation}
(\kappa \otimes {\overline M_0}) / ({\overline M_0}^\vee / {\overline N}); \qquad \sum x_i ^{v_i}.
\end{equation}

Notice that we have a $\Z$-basis for ${\overline N_0}$, namely $\{v_i\}$, so we have natural functions on the semi-group ring $\kappa[{\overline N_0}]$ given by the $v_i^\vee$. We denote these functions by $x_i$. In these coordinates, we write the monomial $x^{u_i}$ as
\begin{equation}\label{latticepointsmonomial}
x^{u_i} = \prod_j x_j^{\langle u_i, v_j\rangle} = \prod_j x_j^{a_{ij}},
\end{equation}
hence
\begin{equation}
\sum_i x^{u_i} = \sum_i \prod_j x_j^{a_{ij}} = F_A
\end{equation}
(see Equation~\eqref{eq: FA}).  This gives a more intrinsic description of the function $F_A$.

Analogously, we take the natural functions on the semi-ring $\kappa[{\overline M_0}]$ given by the dual elements $u_i^\vee$. We denote these functions by $y_i$. In these coordinates, we write the monomial $x^{v_i}$ as
\begin{equation}\label{mirmoninterp}
x^{v_j} =  \prod_i y_i^{\langle u_i, v_j\rangle} = \prod_i x_i^{a_{ij}},
\end{equation}
hence
\begin{equation}\label{transposedpoltoric}
\sum_j x^{v_j} = \sum_j \prod_i x_i^{a_{ij}} = F_{A^T}
\end{equation}
(see Equation~\eqref{eq: FAT}).
We have now checked that the polynomials in this toric interpretation match to the original construction.

The groups involved in the Berglund-H\"ubsch-Krawitz construction also have a toric interpretation. The choice of overlattices $ \overline M$ and $ \overline N$ corresponds to the choice of group. 
First, consider the unique elements $\deg \in {\overline N_0}^\vee$ and $\deg^\vee \in {\overline M_0}^\vee$ so that
$$
\langle \deg, v_i\rangle = \langle u_i, \deg^\vee\rangle = 1 \text{ for all $i$}.
$$

\begin{proposition}[Propositions 2.3.1 and 2.3.4 of \cite{Bor13}]\label{BorProp}
Suppose that $\sum_{i, j } (A^{-1})_{i j} \in \Z_+.$ Then for any choice of a group $G$ so that $J_{F_A} \subseteq G \subseteq SL(F_A)$, there is a pair of overlattices ${\overline M} \supseteq {\overline M_0}$ and ${\overline N}\supseteq {\overline N_0}$ so that ${\overline M}$ and ${\overline N}$ are dual lattices, $\deg \in {\overline M}$, $\deg^\vee \in {\overline N}$ and the groups ${\overline N_0}^\vee / {\overline M}$ and ${\overline M_0}^\vee / {\overline N}$ are naturally isomorphic to the groups $G$ and $G^T_A$, respectively.
\end{proposition}

While Proposition~\ref{BorProp} only requires that $\sum_{i, j } (A^{-1})_{i j} \in \Z_+,$ in the special case $\sum_{i, j } (A^{-1})_{i j} = 1$ we can in fact produce a Calabi-Yau hypersurface as follows. Take the cones $C_{{\overline M}} = \operatorname{Cone}(u_i)$ and $C_{{\overline N}} = \operatorname{Cone}(v_j)$ and produce fans $\Sigma_{{\overline M}}$ and $\Sigma_{{\overline N}}$ by taking the collection of cones that are the proper faces of the cones $C_{{\overline M}}$ and $C_{{\overline N}}$. We star subdivide each fan by the ray generated by $\deg$ and $\deg^\vee$, respectively. We then have two new fans, call them $\Sigma_{{\overline M}, \deg}$ and $\Sigma_{{\overline N}, \deg^\vee}$. 

We can now consider the toric stacks associated to these fans.  For a treatment of toric stacks that will be relevant to the proof of the derived equivalence of BHK mirrors presented here, we direct the reader to Section  5 of \cite{FK14}.  A nice boon of considering stacks here is that it applies even to the non-Gorenstein case which is prevalent in BHK mirror symmetry.

Working with stacks,  $\Sigma_{{\overline M}, \deg}$ and $\Sigma_{{\overline N}, \deg^\vee}$ always correspond to canonical bundles over the fake weighted projective stacks for BHK mirror pairs. Here, we mean that a fake weighted projective stack is the quotient stack analogous to a fake weighted projective space. This means a group quotient on $\mathbb{A}^n \setminus\{0\}$ by $G\mathbb{G}_m$ where $G$ is a finite group and $\mathbb{G}_m$ acts with positive weights on all coordinates.  To obtain this correspondence combine Section 2 of \cite{Sho14} and Proposition~\ref{prop: vec indentification} below.  Namely,  in the notation of loc.~cit.\ Section~2, the fans $\Sigma$ and $\Sigma^\vee$ correspond to the projections of $\Sigma_{{\overline M}, \deg}$ and $\Sigma_{{\overline N}, \deg^\vee}$ under the maps $\pi_{{\overline M}} : {\overline M} \rightarrow {\overline M} / (\deg)$ and $\pi_{{\overline N}}: {\overline N} \rightarrow {\overline N}/(\deg^\vee)$ respectively.

By taking the zero loci of the polynomials in \eqref{latticepointsmonomial} and \eqref{transposedpoltoric}, we obtain the Calabi-Yau orbifolds
$$
Z_{A,G} \subseteq [\P(q_0,\ldots, q_n) / \overline{G}] ; \quad Z_{A^T,G^T_A} \subseteq [\P(r_0,\ldots,r_n) / \overline{G^T_A}].
$$

\section{Categories of Singularities}
\label{sec: categories}
In this section, we provide the necessary details on categories of singularities for global quotient stacks.  We start by reminding the reader of the framework set up in Section 3 of  \cite{FK14}, and then continue with an additional observation from Orlov's original discussion of such categories \cite{Orl04}, which we require later.

Let $X$ be a variety and $G$ be an algebraic group acting on $X$.

\begin{definition}
An object of $\dbcoh{[X/G]}$ is called \newterm{perfect} if it is locally quasi-isomorphic to a bounded complex of vector bundles.  We denote the full subcategory of perfect objects by $\op{Perf}([X/G])$.  The Verdier quotient of $\dbcoh{[X/G]}$ by $\op{Perf}([X/G])$ is called the \newterm{category of singularities} and denoted by
\[
\dsing{[X/G]} := \dbcoh{[X/G]} / \op{Perf}([X/G]).
\]
\end{definition}

We now repeat Orlov's observation that the category of singularities localizes about the singular locus (Proposition 1.14 in \cite{Orl04}) in the presence of a group action.
 \begin{proposition}[Orlov]
Assume that $\op{coh}[X/G]$ has enough locally-free sheaves.  Let $i: U \to X$ be a $G$-equivariant open immersion such that  the singular locus of $X$ is contained in $i(U)$.  Then  the restriction,
   \[
    i^*: \dsing{[X/G]} \to \dsing{[U/G]},
    \]
    is an equivalence of categories.
    \label{prop: local singularity}
 \end{proposition}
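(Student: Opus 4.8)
The plan is to reduce to Orlov's non-equivariant statement (Proposition 1.14 of \cite{Orl04}) by an equivariant version of the standard argument, using the fact that for a global quotient stack $[X/G]$ the derived category $\dbcoh{[X/G]}$ is the derived category of $G$-equivariant coherent sheaves and that the perfect objects are exactly the ones whose underlying (non-equivariant) complex is perfect. First I would record the two ingredients that make the argument run: (i) since $\op{coh}[X/G]$ has enough locally free sheaves, every object of $\dbcoh{[X/G]}$ is represented by a bounded-above complex of $G$-equivariant vector bundles, and an object is perfect precisely when such a representative can be chosen bounded; (ii) restriction along a $G$-equivariant open immersion $i\colon U\to X$ is exact and sends $G$-equivariant vector bundles to $G$-equivariant vector bundles, hence induces $i^*\colon \dbcoh{[X/G]}\to\dbcoh{[U/G]}$ carrying $\op{Perf}([X/G])$ into $\op{Perf}([U/G])$, so it descends to the Verdier quotients $i^*\colon \dsing{[X/G]}\to\dsing{[U/G]}$.

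For \emph{essential surjectivity}, I would take $\mathcal{F}\in\dbcoh{[U/G]}$, push it forward (or extend it) to a $G$-equivariant coherent sheaf on $X$ — one can take $j_*$ of a chosen equivariant extension, or more simply, since the singular locus of $X$ is contained in $i(U)$ and $U$ contains all the singularities, work with a bounded-above equivariant locally free resolution of $\mathcal{F}$ on $U$ and extend its terms to equivariant vector bundles on $X$ (using that coherent equivariant sheaves extend and that on the smooth part outside $i(U)$ every coherent sheaf is already perfect). Truncating far enough to the left, the kernel becomes locally free away from $i(U)$, i.e.\ perfect there; but since $X\setminus i(U)$ is exactly where $X$ is smooth, "perfect away from $i(U)$" combined with the truncation shows that a suitable stupid truncation of the extended complex is a genuine object of $\dbcoh{[X/G]}$ restricting to $\mathcal{F}$ up to a perfect complex. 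Hence $i^*$ is essentially surjective on $\dsing{}$.

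For \emph{full faithfulness} I would argue exactly as Orlov does, now equivariantly. Given $\mathcal{E},\mathcal{F}\in\dbcoh{[X/G]}$, a morphism in $\dsing{[X/G]}$ is represented by a roof $\mathcal{E}\leftarrow\mathcal{G}\to\mathcal{F}$ whose left leg has perfect cone; using enough equivariant locally free sheaves one replaces $\mathcal{G}$ by an equivariant complex and controls the ambiguity by equivariant versions of the lemmas that an object supported (as a non-perfect complex) on $X\setminus(\text{sing locus})$ is zero in $\dsing{}$. Concretely, one shows: a morphism $\mathcal{E}\to\mathcal{F}$ in $\dbcoh{[X/G]}$ that becomes zero in $\dsing{[U/G]}$ already factors through a perfect complex on $X$ (since it does so on $U$, and the obstruction lives over $X\setminus i(U)$ which is smooth, hence everything in sight there is perfect); and conversely any morphism in $\dsing{[U/G]}$ lifts. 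All of this is the content of the cited Proposition 1.14 run in the abelian category $\op{coh}[X/G]$ of equivariant sheaves, which has enough locally free objects by hypothesis; equivariance is carried along formally because $i$, $i^*$, and the resolutions are all $G$-equivariant.

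The main obstacle is making the extension-from-$U$-to-$X$ step genuinely equivariant and bounded: one must ensure that the chosen locally free resolutions, truncations, and extensions of coherent sheaves can all be performed $G$-equivariantly and compatibly, which is where the hypothesis that $\op{coh}[X/G]$ has enough locally-free sheaves is essential. Once that is in place the rest is a line-by-line transcription of Orlov's proof, so I would state the equivariant lemmas (equivariant resolution, equivariant extension of coherent sheaves, perfection of all coherent sheaves on the smooth locus) and then simply cite \cite[Prop. 1.14]{Orl04} for the formal bookkeeping of roofs and cones.
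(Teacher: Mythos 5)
Your proposal is correct and matches the paper's approach: the paper simply observes that the proof of Proposition 1.14 of Orlov's paper works verbatim for equivariant sheaves, which is exactly the reduction you carry out (equivariant resolutions via the enough-locally-frees hypothesis, equivariant extension from $U$, and perfection of everything on the smooth complement of $i(U)$). Your write-up just makes explicit the bookkeeping the paper leaves implicit.
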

 \begin{proof}
 The proof of Proposition 1.14 in \cite{Orl04} works verbatim for equivariant sheaves.
 \end{proof}
 
Our goal later on, will be to convert a problem on hypersurfaces in weighted projective space to a toric calculation.  This is done using a theorem of Isik and Shipman which allows us to pass from studying a hypersurface to studying the (toric) total space of the line bundle defining it.

The setup is general and does not involve toric varieties. Namely, consider a variety $X$ with the action of an algebraic group $G$ and an equivariant vector bundle $\mathcal{E}$ on $X$.  Take a section $s \in \op{H}^0(X, \mathcal E)^G$ and consider the zero locus $Z$ of $s$ in $X$.  The pairing with $s$ induces a global function on the total space of $\mathcal E^\vee$.    Let $Y$ be the zero locus of the pairing with $s$ in $\op{tot}(\mathcal{E}^\vee)$ i.e. the union of the zero section of $\op{tot}(\mathcal{E}^\vee)$ and $\op{tot}(\mathcal{E}^\vee|_Z)$.  Consider the fiberwise dilation action of $\gm$ on $Y$.
 \begin{theorem}[Isik, Shipman, Hirano]
 Suppose that $s$ is a regular section i.e., the Koszul complex on $s$ is a resolution of $\O_Z$.  Then there is an equivalence of categories
 \[
 \dsing{[Y/(G \times \gm)]} \cong \dbcoh {[Z/G]}.
 \]
 \label{thm: isik-shipman}
 \end{theorem}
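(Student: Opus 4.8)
The plan is to realize $\dsing{[Y/(G\times\gm)]}$ as a category of $G$-equivariant matrix factorizations and then to strip off the fibre directions of $\mathcal{E}^\vee$ by a relative, equivariant form of Kn\"orrer periodicity, in which the exactness of the Koszul complex on $s$ is precisely the hypothesis that makes the reduction land on the honest zero locus $Z$ rather than a derived thickening.

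First I would fix notation: write $\pi\colon V\to X$ for the total space $V=\op{Tot}_X(\mathcal{E}^\vee)$, so that the fibrewise $\gm$-action dilates fibres with weight $1$; the pairing with $s$ is then a function $w\in\Gamma(V,\mathcal{O}_V)$ that is $G$-invariant and $\gm$-homogeneous of weight $1$, and $Y=w^{-1}(0)$. Since $s$ vanishes on $Z$, the restriction to $Z$ of the zero section $X\hookrightarrow V$ factors through $Y$, giving a $(G\times\gm)$-equivariant closed immersion $j\colon Z\hookrightarrow Y$ (with $\gm$ acting trivially on $Z$); the candidate equivalence is the composite of $j_*$ (suitably twisted and shifted) with the Verdier localization $\dbcoh{[Y/(G\times\gm)]}\to\dsing{[Y/(G\times\gm)]}$.

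The first step of the argument is the graded singularity/LG comparison: since $w$ is $\gm$-homogeneous of positive weight and $[V/G]$ has enough locally free sheaves (the hypothesis of Proposition~\ref{prop: local singularity}), one has, in the form going back to Orlov \cite{Orl04} and its equivariant refinements, an equivalence
\[
\dsing{[Y/(G\times\gm)]}\;\cong\;\dabsFact{([V/G],w)},
\]
under which the functor above becomes $\mathcal{F}\mapsto\pi^*\mathcal{F}$ tensored with the Koszul factorization of $w$. The decisive feature is that $w$ is linear along the fibres of $\pi$: in fibre coordinates it is the canonical pairing $\langle\pi^* s,\tau\rangle$ of the constant coefficient $\pi^* s\in\Gamma(V,\pi^*\mathcal{E})$ against the tautological fibre variable $\tau$. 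Hence the Koszul factorization built from $\pi^* s$ and $\tau$ is a compact generator of $\dabsFact{([V/G],w)}$ — which yields essential surjectivity — while computing $\op{RHom}$ between such Koszul factorizations collapses to the Koszul complex of $s$ on $X$, which by assumption resolves $\mathcal{O}_Z$; the $\gm$-grading annihilates the infinitely many extra summands that would otherwise obstruct full faithfulness, so that $\op{RHom}$ in $\dabsFact{([V/G],w)}$ between the images of $\mathcal{F},\mathcal{G}$ agrees with $\op{RHom}_{[Z/G]}(\mathcal{F},\mathcal{G})$. Combining, one obtains $\dsing{[Y/(G\times\gm)]}\cong\dbcoh{[Z/G]}$.

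I expect the main obstacle to be precisely this second step in its full generality: one must run the elimination of fibre variables fibrewise over the (a priori singular) base $X$ while carrying the $G$-action, and verify that it is exactness of the Koszul complex on $s$ — not regularity of $X$ — that is needed. This is exactly the content of the cited results: Isik treats the case $G$ trivial with $X$ smooth, Shipman gives a geometric proof in the hypersurface case, and Hirano's derived Kn\"orrer periodicity for gauged Landau--Ginzburg models covers the equivariant statement at the level of generality used here. In practice the cleanest route is therefore to reduce, by faithfully flat descent along a $(G\times\gm)$-invariant trivializing affine cover of $\mathcal{E}$, to the local assertion that $\op{MF}\!\big(\A^r\times\op{Spec} R,\ \textstyle\sum_{i=1}^r y_i f_i\big)\cong\dbcoh{\op{Spec}(R/(f_1,\dots,f_r))}$ for a regular sequence $f_1,\dots,f_r\in R$, and then to assemble the global, equivariant statement from the cited theorems.
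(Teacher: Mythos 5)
The paper does not actually prove Theorem~\ref{thm: isik-shipman}: its ``proof'' is a two-line attribution to Isik, Shipman, and (for the $G$-equivariant case) Theorem 1.2 of Hirano, and your proposal, after its expository detour, also ends by assembling the statement from exactly those references --- so in substance you and the paper take the same route. What you add is a faithful sketch of how the cited results are themselves proved: the graded Orlov comparison $\dsing{[Y/(G \times \gm)]} \cong \dabsFact{([V/G],w)}$ for the weight-one potential $w = \langle \pi^* s, \tau\rangle$, followed by relative equivariant Kn\"orrer periodicity eliminating the fibre variables, with exactness of the Koszul complex on $s$ guaranteeing the reduction lands on $\mathcal{O}_{[Z/G]}$ and the $\gm$-grading killing the $2$-periodic tails; the functor $j_*$ from the copy of $Z$ sitting in the zero section is indeed the equivalence. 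Two small cautions if you were to write this out. First, the graded-Orlov step requires the ambient $[V/(G\times\gm)]$ to be smooth (so that perfect complexes exhaust $\dbcoh{[V/(G\times\gm)]}$ away from the critical locus), not merely that it have enough locally free sheaves; the theorem as stated in the paper suppresses this hypothesis, though it holds in all of the paper's applications since there $X$ is a smooth toric DM stack. Second, ``the Koszul factorization is a compact generator, which yields essential surjectivity'' is too quick on its own: one needs full faithfulness first, plus a support argument on the critical locus of $w$ and idempotent completeness, to conclude that the essential image --- a thick subcategory containing a generator --- is everything. Neither point is a genuine gap, since both are handled in the cited references, but they are precisely the hypotheses one is responsible for checking when invoking Hirano's theorem.
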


 \begin{proof}
The theorem is originally due independently to Isik \cite{Isik} and Shipman \cite{Shipman}.  With the $G$-action, it is Proposition 4.8 of \cite{Hirano}.
\end{proof}

 \begin{corollary}
    Let $V$ be an algebraic variety with a $G \times \gm$ action.  Suppose there is an open subset $U \subseteq V$ such that $U$ is $G \times \gm$ equivariantly isomorphic to $Y$ as above and that $U$ contains the singular locus of $X$.
Then
 \[
 \dsing{[V/(G \times \gm)]} \cong \dbcoh {[Z/G]}.
 \]
 \label{cor: compact RG-flow}
 \end{corollary}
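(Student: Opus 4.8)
The plan is to concatenate the two main tools assembled in this section: Orlov's localization of the category of singularities along the singular locus (Proposition~\ref{prop: local singularity}) and the Isik--Shipman--Hirano equivalence (Theorem~\ref{thm: isik-shipman}). There is essentially no new content beyond bookkeeping, so the proof should be a short chain of three equivalences.

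First I would apply Proposition~\ref{prop: local singularity} with the group taken to be $G \times \gm$ and the equivariant open immersion taken to be $U \hookrightarrow X$. By hypothesis $U$ contains the singular locus of $X$, which is automatically $G \times \gm$-invariant, so the proposition yields that the restriction functor is an equivalence
\[
\dsing{[X/(G \times \gm)]} \cong \dsing{[U/(G \times \gm)]}.
\]
To invoke the proposition one needs $\op{coh}[X/(G \times \gm)]$ to have enough locally free sheaves; in the applications $X$ is an open subvariety of the total space of a vector bundle over a quotient of an affine space by a torus, where this holds, and I would simply include it among the hypotheses.

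Next, the given $G \times \gm$-equivariant isomorphism $U \cong Y$ is an isomorphism of quotient stacks $[U/(G \times \gm)] \cong [Y/(G \times \gm)]$; transport of structure along it gives an exact equivalence of bounded derived categories of coherent sheaves that preserves the subcategories of perfect complexes, hence an equivalence of Verdier quotients
\[
\dsing{[U/(G \times \gm)]} \cong \dsing{[Y/(G \times \gm)]}.
\]
Finally, $Y$ is precisely the object to which Theorem~\ref{thm: isik-shipman} applies --- the zero locus, inside the total space of $\mathcal E^\vee$, of the global function obtained by pairing with $s$, equipped with its fiberwise $\gm$-action --- and the phrase ``$Y$ as above'' carries the standing assumption that the Koszul complex on $s$ is exact. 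Thus the theorem gives $\dsing{[Y/(G \times \gm)]} \cong \dbcoh{[Z/G]}$, and composing the three displayed equivalences proves the corollary.

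The argument has no serious obstacle; the only points meriting attention are (a) verifying the ``enough locally free sheaves'' hypothesis needed for Proposition~\ref{prop: local singularity}, and (b) making sure that $U$ really is ``$Y$ as above'' in the strong sense --- i.e.\ that the Koszul complex of the relevant section is exact on the model $U$ --- rather than merely being abstractly isomorphic to some such $Y$, since it is precisely that exactness which Theorem~\ref{thm: isik-shipman} consumes.
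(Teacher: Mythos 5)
Your proof is correct and is essentially the paper's own argument: localize to $U$ via Proposition~\ref{prop: local singularity}, transport along the equivariant isomorphism $U \cong Y$, and apply Theorem~\ref{thm: isik-shipman}. Your side remarks about the ``enough locally free sheaves'' hypothesis and the Koszul exactness of $s$ are reasonable points of care but do not change the argument.
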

 \begin{proof}
 We have
 \begin{align*}
  \dsing{[V/(G \times \gm)]} & \cong  \dsing{[U/(G \times \gm)]} \\
  & \cong \dbcoh{[Z/G]}
 \end{align*}
where the first line is Proposition~\ref{prop: local singularity} and the second line is Theorem~\ref{thm: isik-shipman}.
 \end{proof}

\section{Torus Actions on Affine Space}
\label{sec: algebraic}

In this section, we extend the setup of Section 4 of \cite{FK14} to partial compactifications of vector bundles. Consider an affine space $X:= \mathbb A^{n+t}$ with coordinates $x_i, u_j$ for $1 \leq i \leq n, 1 \leq j \leq t$.
Let $T = \gm^{n+t}$ be the open dense torus with the standard embedding and action on $X$.
Take $S \subseteq T$ to be a subgroup and $\widetilde{S}$ be the connected component of the identity.

The possible GIT quotients for the action of $\widetilde{S}$ on $X$ \cite{MFK} have both an algebraic and toric description.  The description in terms of GIT variations comes from varying linearization on trivial bundle (which is ample as $X$ is affine).  The choice of linearization on the trivial bundle is the same thing as a choice of a character of $\widetilde{S}$.  That is, given an element $\chi \in \op{Hom}(\widetilde{S}, \gm)$, we can form the corresponding line bundle $\O(\chi)$ by pulling back the representation of $\widetilde{S}$ via the morphism of stacks $[X/\widetilde{S}] \to [\op{pt}/\widetilde{S}]$.

In studying GIT variations, it is often convenient to consider $\chi$ as an element of the vector space $\op{Hom}(\widetilde{S}, \gm) {\otimes_{\Z}} \Q$ by rationalizing denominators in order to get an equivariant line bundle. 
Now, each linearization in Mumford's GIT, or in our case, each choice of $\chi$,  determines an open subset $U_\chi$ corresponding to the semi-stable locus of $X$ with respect to $\chi$. 

Furthermore, if we think of the vector space  $\op{Hom}(\widetilde{S}, \gm) {\otimes_{\Z}} {\Q}$ as a parameter space for linearization, then it was shown in \cite{GKZ} that this parameter space has a natural fan-structure $\Sigma_{\op{GKZ}}$ called the \newterm{GKZ-fan}. 
 The fan is defined by the following property,  each $U_\chi$ is constant on the interior of each cone in the fan.     
 The maximal cones of this fan are called \newterm{chambers} and the codimension 1 cones are called \newterm{walls}.

To describe the situation precisely, apply $\op{Hom}(-, \gm)$ to the exact sequence
\[
0 \longrightarrow S  \overset{i_S}{\longrightarrow} \gm^{n+t} \overset{\text{proj}}{\longrightarrow} \op{Coker }(i_S) \to 0
\]
to get
\[
 \op{Hom}(\op{Coker }(i_S), \gm)  \overset{\widehat{\text{proj}}}{\longrightarrow} \Z^{n+t} \overset{\widehat{i_S}}{\longrightarrow} \op{Hom}(S, \gm) \to 0.
\]
Set $\nu_i(S)$ to be the element of  $\op{Hom}(\op{Coker }(i_S), \gm)^{\vee}$ given by the composition of $\widehat{\text{proj}}$ with the projection of $\Z^{n+t}$ onto its $i^{\op{th}}$ factor.  Then, we define $\nu(S)$ as the following vector
\[
\nu(S) := (\nu_1(S), ...., \nu_{n+t}(S)).
\]

\begin{theorem}
There is a bijection between chambers of the GKZ fan for the action of $S$ on $\A^{n+t}$ and regular triangulations of the set $\nu(S)$.  In particular, there are finitely many chambers of the GKZ fan.
\end{theorem}
\begin{proof}
See Proposition 15.2.9 of \cite{CLS} or Chapter 7, Theorem 1.7 of \cite{GKZ} for the original formulation in terms of polytopes.
\end{proof}

By the above, we may enumerate the chambers of the GKZ-fan by  $\sigma_1, ..., \sigma_r$.  Hence, for $1 \leq p \leq r$ we may choose a character $\chi_p$ in the interior of $\sigma_p$, and consider the semi-stable points with respect to that character.  This yields an open subset in $X$ which we denote by $U_p$ (it does not depend on the choice of character).  It also corresponds to a regular triangulation $\mathcal T_p$ of the collection of points $\{\nu_1(S), ..., \nu_{n+t}(S)\}$.

\begin{definition}
Let $\times : \gm^{n+t} \to \gm$ be the multiplication map.  We say that $S$ satisfies the \newterm{quasi-Calabi-Yau condition} if $\times|_{\widetilde{S}} = \bf{1}$, i.e., the multiplication map restricted to $\widetilde{S}$ is the trivial homomorphism.
\end{definition}

\begin{definition}
 Let $G$ be a group acting on a space $X$ and let $f$ be a global function on $X$.  We say that $f$ is \newterm{semi-invariant} with respect to a character $\chi$ if, for any $g \in G$,
 \[
 f(g \cdot x) = \chi(g)f(x).
 \]
 Equivalently, this means that $f$ is a section of the equivariant line bundle $\O(\chi)$ on the global quotient stack $[X/G]$.
\end{definition}

\begin{remark}
Each variable $x_i$ is semi-invariant with respect to a unique character of $S$ which we can denote by $\op{deg}(x_i)$.   The quasi-Calabi-Yau condition is equivalent to
\begin{equation}
\sum \op{deg}(x_i)+ \sum \op{deg}(u_j)
\end{equation}
being torsion.
\end{remark}

To apply Corollary \ref{cor: compact RG-flow}, we will add an auxiliary $\gm$-action and an $S$-invariant function which is $\gm$-semi-invariant.
This auxiliary $\gm$-action  acts with weight $0$ on the $x_i$ for all $i$ and with weight $1$ on the $u_j$  for  all $j$.  We refer to this auxiliary action as \newterm{$R$-charge}.

The action of $S$ on $\op{Spec} \kappa[u_j]$ gives a character $\gamma_j$ of $S$.
Let $f_1, ..., f_t$ be $S$-semi-invariant functions in the $x_i$ with respect to the character $\gamma_j^{-1}$. The functions $f_i$ determine a complete intersection in $\A^n$ as their common zero-set.   We can also use them to
define a function
\[
w := \sum_{j=1}^t u_j f_j.
\]
we call the \newterm{superpotential}.

The superpotential $w$ is $S$-invariant and $\chi$-semi-invariant for the projection character $\chi: S \times \gm \to \gm$.  This means that it is homogeneous of degree 0 for the $S$-action and homogeneous of degree $1$ with respect to the $R$-charge.  

Let $Z(w)$ denote the zero-locus of $w$ in $X$ and
\[
Y_p := Z(w) \cap U_p.
\]

\begin{theorem}[Herbst-Walcher]
If $S$ satisfies the quasi-Calabi-Yau condition, there is an equivalence of categories
\[
\dsing{[Y_p/ S \times \gm]} \cong \dsing{[Y_q/ S \times \gm]}
\]
for all $1 \leq p, q \leq r$.
\label{thm: HW}
\end{theorem}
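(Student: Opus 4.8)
The plan is to deduce this from variation of GIT, in the form that has recently been established for categories of singularities of LG models. The relevant input is the theory of ``wall-crossing'' equivalences for graded matrix factorization categories / categories of singularities of Landau--Ginzburg pairs, which under suitable Calabi--Yau hypotheses produce equivalences (rather than merely semi-orthogonal decompositions) across walls of the GKZ fan. Concretely, I would invoke the comparison results of Halpern-Leistner and/or Ballard--Favero--Katzarkov (and the LG variants thereof) together with the observation that the superpotential $w = \sum_j u_j f_j$ is $S$-invariant and $\chi$-semi-invariant for the $R$-charge, so that the pair $([X / S \times \gm], w)$ is a genuine gauged LG model, and the loci $Z_p = Z \cap U_p$ are exactly the restrictions of this model to the various GIT chambers.

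The key steps, in order, would be the following. First, reduce the statement to a statement about a single wall: since the GKZ fan is connected in codimension one, any two chambers $\sigma_p, \sigma_q$ are joined by a sequence of chambers each adjacent across a wall, so it suffices to prove the equivalence when $\sigma_p$ and $\sigma_q$ share a codimension-one wall. Second, for such a pair, the VGIT machinery produces a semi-orthogonal decomposition of $\dsing{[Z_p / S \times \gm]}$ (and of $\dsing{[Z_q / S \times \gm]}$) whose pieces are the ``central'' category attached to the wall together with a number of copies of the category of singularities of the fixed locus of the destabilizing one-parameter subgroup, twisted by characters; the number of extra copies on each side is governed by the weights of the normal bundle of the fixed locus, i.e.\ by the ``window width''. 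Third, invoke the quasi-Calabi-Yau condition: since $\sum \op{deg}(x_i) + \sum \op{deg}(u_j)$ is torsion, the relevant $\gm$-weight of the canonical bundle along the fixed locus vanishes (after restricting to $\widetilde S$), which forces the window widths on the two sides of the wall to coincide and, more importantly, forces the ``extra'' pieces to cancel. This is the standard mechanism by which the strictly Calabi--Yau case of VGIT yields an equivalence rather than a strict inclusion. Fourth, assemble: the two semi-orthogonal decompositions then have the same pieces in compatible order, giving the desired equivalence $\dsing{[Z_p / S \times \gm]} \cong \dsing{[Z_q / S \times \gm]}$.

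There are a couple of technical wrinkles I would need to address. One is that $S$ need not be connected; this is why the statement is phrased in terms of $\widetilde S$ in the quasi-Calabi--Yau condition and why the GKZ fan is built from $\widetilde S$. I would handle the finite part $S / \widetilde S$ by noting it acts compatibly through all the constructions — it commutes with the one-parameter subgroups used in the VGIT analysis and preserves the window subcategories — so the equivalence can be taken equivariantly for $S / \widetilde S$, hence descends. The second wrinkle is that $Z_p$ is a hypersurface (cut out by $w$) rather than all of $U_p$, so one is really doing VGIT for categories of singularities / matrix factorizations rather than for $\dbcoh{}$; but the wall-crossing formalism has been developed in exactly this generality (this is what Theorem~\ref{thm: isik-shipman} and Corollary~\ref{cor: compact RG-flow} are setting up for, and it is why the theorem is attributed to Herbst--Walcher, whose physical derivation is precisely this gauged LG wall-crossing).

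The main obstacle I expect is verifying that the quasi-Calabi--Yau condition genuinely kills \emph{all} the extra semi-orthogonal pieces uniformly across \emph{every} wall of the fan, not just the ``generic'' one — i.e.\ controlling the weights of the conormal directions to each fixed locus and checking that the CY condition pins them down to the symmetric situation. Equivalently: one must check that the ``$R$-charge'' auxiliary $\gm$ interacts with the wall-crossing one-parameter subgroups so that the grading shift bookkeeping closes up. This is a weight computation on $X = \A^{n+t}$ with its torus action, so it is in principle elementary, but it is the crux: it is exactly the point where the hypothesis $\times|_{\widetilde S} = \mathbf 1$ is used, and getting the signs and the torsion-versus-zero distinction right is the delicate part.
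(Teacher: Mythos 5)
Your proposal is correct and follows essentially the same route as the paper, which simply cites Theorem 3 of Herbst--Walcher together with its geometric formulations in Ballard--Favero--Katzarkov (Theorem 5.2.1 of \cite{BFK12}) and Halpern-Leistner (\cite{HL12}, Cor.~4.8 and Prop.~5.5) --- exactly the wall-crossing/window machinery for gauged LG models, with the quasi-Calabi--Yau condition forcing the semi-orthogonal complements to vanish, that you describe. Your sketch is in effect an accurate summary of the content of those cited results rather than a different argument.
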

\begin{proof}
This is essentially Theorem 3 of \cite{HW} stated in geometric as opposed to algebraic language.  For the geometric translation see Theorem 5.2.1 of \cite{BFK12} (version 2 on arXiv) or \cite{HL12} Corollary 4.8 and Proposition 5.5.
\end{proof}

We now refocus our attention to decribe explicitly the open sets $U_p\subseteq X$ corresponding to the semistable loci associated to the characters $\chi_p$ in $\op{Hom}(\widetilde{S}, \gm) {\otimes_{\Z}} \Q$. For $1 \leq p \leq r$, we can define the irrelevant ideal $\I_p$ that is associated to the character $\chi_p$ in the chamber $\sigma_p$ of the secondary fan as
\begin{equation*}
\I_p := \left\langle \prod_{i \notin I} x_i \prod_{j \notin J}u_j \middle| \ \bigcap_{i \in I} F_{i, \chi_p} \cap \bigcap_{j \in J} F_{n+j, \chi_p} \neq \emptyset \right\rangle
\end{equation*}
where $I \subseteq \{1, ..., n\}, J \subseteq \{1, ..., t\}$ and $F_{\chi_p}$ are the virtual facets of the polyhedron $P_{\chi_p}$ (see Sections 14.2 and 14.4 of \cite{CLS}). 

Alternatively, $\I_p$ can be defined by $\mathcal T_p$, the corresponding triangulation of $\nu(S)$.  Namely,
\begin{equation}
\I_p =  \left\langle \prod_{i \notin I} x_i \prod_{j \notin J} u_j  \middle| \ \bigcup_{i\in I} \nu_i(S) \cup \bigcup_{j\in J} \nu_{n+j}(S) \text{ is the set of vertices of a simplex in } \mathcal T_p \right\rangle.
\label{eq: I}
\end{equation}
The open set $U_p$ is complement of the zero set of the irrelevant ideal $\I_p$, i.e.,
\[
U_p = X \setminus Z(\I_p).
\]

We also consider a certain subideal of the irrelevant ideal given by taking all generators found by fixing $J = \{1, ..., t \}$:
\begin{align}
\J_p & := \left\langle \prod_{i \notin I} x_i  \middle| \ \bigcap_{i \in I} F_{i, \chi_p} \cap \bigcap_{j=1}^t F_{n+j, \chi_p} \neq \emptyset \right\rangle \notag \\
& =  \left\langle \prod_{i \notin I} x_i \middle| \ \bigcup_{i\in I} \nu_i(S) \cup \bigcup_{j=1}^t \nu_{n+j}(S) \text{ is the set of vertices of a simplex in }  \mathcal T_p \right\rangle.
\label{eq: J}
\end{align}

Note that the ideal $\J_p$ is non-zero if and only if there exists a simplex in the triangulation $\T_p$ whose set of vertices include $\nu_{n+j}(S)$ for all $j$. The complement of the zero-locus of $\J_p$ gives a new open set
\[
V_p := X \setminus Z(\J_p) \subseteq U_p.
\]

We may also view $\J_p$ as an ideal in $\kappa[x_1, ...,x_n]$ in which case we denote it by $\J_p^x$.   Now, restrict the action of $S$ to $\A^n = \op{Spec} \kappa[x_1, ..., x_n]$ (considered as a plane in $\A^{n+t}$).
This gives an open set of $\A^n$
\[
V_p^x := \A^n \setminus Z(\J_p^x)
\]
and a toric Deligne-Mumford stack
\[
X_p := [V_p^x / S].
\]

When $\J_p$ is a non-zero ideal, then $V_p^x$ and $X_p$ are nonempty and we can show that $[V_p/S]$ is a vector bundle over $X_p$. The inclusion of rings $\kappa[x_1, ..., x_n] \to \kappa[x_1, ..., x_n, u_1, ..., u_t]$ restricts to a $S$-equivariant morphism
\[
[V_p/S] \to [V_p^x / S] = X_p.
\]

\begin{proposition}
Suppose the ideal $\J_p$ is non-zero. The morphism
\[
[V_p/S] \to X_p.
\]
realizes $[V_p/S]$ as the total space of a vector bundle
\[
[V_p/S] \cong \op{tot} \bigoplus_{j=1}^t \O(\gamma_j).
\]
Furthermore, the $R$-charge action of $\gm$ is the dilation action along the fibers.
Finally, for each $j$, the function $f_j$ gives a section of $\O(\gamma_j^{-1})$ and the superpotential $w = \sum u_j f_j$ restricts to the pairing with the section $\bigoplus f_j$.
\label{prop: vec indentification}
\end{proposition}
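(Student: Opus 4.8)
The plan is to reduce the proposition to the single observation that the ideal $\J_p$ of \eqref{eq: J} is generated by monomials in $x_1,\dots,x_n$ alone: it is cut out of $\I_p$ by fixing $J=\{1,\dots,t\}$, so no variable $u_j$ appears in any generator. First I would use this: writing $\A^{n+t}=\A^n\times\A^t$ with the two factors carrying the $x_i$ and the $u_j$ respectively, we get $Z(\J_p)=Z(\J_p^x)\times\A^t$, hence
\[
V_p \;=\; \A^{n+t}\setminus Z(\J_p)\;=\;\bigl(\A^n\setminus Z(\J_p^x)\bigr)\times\A^t\;=\;V_p^x\times\A^t .
\]
Every generator of $\J_p^x$ is a monomial, hence semi-invariant for the whole torus $T\supseteq S$, so $Z(\J_p^x)$---and with it $V_p^x$ and $V_p$---is $S$-stable; the ring map $\kappa[x_i]\hookrightarrow\kappa[x_i,u_j]$ is then the $S$-equivariant projection $V_p^x\times\A^t\to V_p^x$, and it is also equivariant for the $R$-charge $\gm$ once $\gm$ acts trivially on $V_p^x$ and by the standard scaling on $\A^t$. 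Passing to quotient stacks returns the morphism $[V_p/S]\to X_p$ of the statement.

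Next I would identify this morphism as a total space. Because $S$ acts on the fiber $\A^t$ linearly and diagonally in the $u_j$, the standard identification of a linear group action over a base with a vector bundle on the quotient stack exhibits $[V_p/S]=[(V_p^x\times\A^t)/S]$ as the total space of $\bigoplus_{j=1}^t L_j$, where $L_j$ is the line bundle on $X_p=[V_p^x/S]$ attached to the $S$-action on the $u_j$-axis. By definition that action is the character $\gamma_j$ by which $S$ acts on $\op{Spec}\kappa[u_j]$, so $L_j=\O(\gamma_j)$ and $[V_p/S]\cong\op{tot}\bigoplus_{j=1}^t\O(\gamma_j)$, with $u_j$ the tautological fiber coordinate on the $j$-th summand. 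The $R$-charge statement is then immediate: $\gm$ was defined to have weight $0$ on every $x_i$ and weight $1$ on every $u_j$, which is exactly fiberwise dilation.

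For the sections I would note that each $f_j$ is a polynomial in the $x_i$, so it restricts to a regular function on $V_p^x$, and it is $S$-semi-invariant of character $\gamma_j^{-1}$; by the definition of $\O(\chi)$ such a function is precisely a global section $s_j\in\op{H}^0\bigl(X_p,\O(\gamma_j^{-1})\bigr)=\op{H}^0\bigl(X_p,\O(\gamma_j)^\vee\bigr)$. Then $\bigoplus_j s_j$ is a section of $\bigl(\bigoplus_j\O(\gamma_j)\bigr)^\vee$, and the function it determines on $\op{tot}\bigoplus_j\O(\gamma_j)$ by pairing with fiber vectors is, in the coordinates $u_j$ of the previous step, exactly $\sum_j u_j f_j=w$. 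This is consistent with $w$ being $S$-invariant of $R$-charge $1$, as recorded after the definition of the superpotential, and it puts the data in the shape needed to apply Theorem~\ref{thm: isik-shipman} and Corollary~\ref{cor: compact RG-flow}.

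I do not expect a serious obstacle; the two points that will require care are (i) verifying that $\J_p$ really involves no $u$-variable, so that $V_p$ is literally the product $V_p^x\times\A^t$ and the structure morphism is literally a projection, and (ii) keeping the character conventions straight---which sign of $\gamma_j$ governs the action on the coordinate $u_j$ as opposed to the $u_j$-axis, together with the resulting identification $\O(\gamma_j)^\vee\cong\O(\gamma_j^{-1})$---so that the section lands in $\O(\gamma_j^{-1})$ and the pairing reproduces $w$ on the nose rather than up to an inversion of characters. Once these conventions are fixed compatibly with the Herbst-Walcher and Isik-Shipman setups used above, the remainder is a direct unwinding of definitions.
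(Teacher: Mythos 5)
Your proposal is correct and follows essentially the same route as the paper: decompose $V_p=V_p^x\times\A^t$ (which the paper asserts and you justify via the observation that $\J_p$ involves only the $x$-variables), invoke the standard identification of a diagonal linear $S$-action over a base with $\op{tot}\bigoplus\O(\gamma_j)$, read off the $R$-charge as fiberwise dilation, and identify $f_j$ as a section of $\O(\gamma_j^{-1})$ so that $w=\sum u_jf_j$ is the tautological pairing. The character bookkeeping you flag is handled in the paper exactly as you propose, via $\op{tot}\bigoplus\O(\gamma_j)=\underline{\op{Spec}}\,\op{Sym}\bigl(\bigoplus\O(\gamma_j^{-1})\bigr)$.
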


\begin{proof}
Notice first that the open set $V_p$ decomposes as a product
\[
V_p = V_p^x \times \op{Spec} \kappa[u_1, ..., u_t].
\]
It is then a standard fact that the stack
\[
[V_p/S] = [V_p^x \times \op{Spec} \kappa[u_1, ..., u_t] / S]
\]
can be realized as the equivariant bundle on $[V_p^x / S]$ given by the representation of $S$ on $\op{Spec} \kappa[u_1, ..., u_t]$.

Now, the group $S$ acts on $\op{Spec} \kappa[u_1, ..., u_t]$ via the characters $\gamma_j$ and the representation is nothing more than the diagonal action of these characters.  Hence, we get precisely the statement
\begin{equation}
[V_p/S] \cong \op{tot} \bigoplus_{j=1}^t \O(\gamma_j).
\label{eq: total space realization}
\end{equation}

By definition, the R-charge action of $\gm$ acts with weight $0$ on $V_p^x$ and weight $1$ on $\op{Spec} \kappa[u_1, ..., u_t]$, i.e., by scaling on the second factor.    Under the isomorphism \eqref{eq: total space realization}, this $\gm$ just acts with weight $1$ along the fibers of the vector bundle, as desired.

Finally, by definition,
\[
\op{tot} \bigoplus_{j=1}^t \O(\gamma_j) = \underline{\op{Spec}}\left(\op{Sym}\left(\bigoplus_{j=1}^t \O(\gamma_j^{-1})\right)\right)
\]
with global functions identified as
\[
\op{H}^0\left(\op{Sym}\left(\bigoplus_{j=1}^t \O(\gamma_j^{-1})\right)\right) = \bigoplus_{j = 1, ..., t, r \in \Z} u_j^r\op{H}^0\left(\bigoplus_{j=1}^t \O(\gamma_j^{-r})\right)
\]
so that $w = \sum u_j f_j$ is identified with $\bigoplus f_j \in \op{H}^0(\bigoplus_{j=1}^t \O(\gamma_j^{-1})) \subseteq \op{H}^0(\op{Sym}(\bigoplus_{j=1}^t \O(\gamma_j^{-1})))$ as desired.
\end{proof}

From Proposition \ref{prop: vec indentification}, we see that for all $p$, the zero set of $\oplus f_j$ as a section of $V_p$ defines a complete intersection
\[
Z_p := Z(\oplus f_j) \subseteq X_p.
\]
We can also consider the zero locus of $w|_{U_p}$ which we denote by
\[
Y_p := Z(w) \cap U_p.
\]

Let $\partial w$ be the Jacobian ideal, i.e., the ideal generated by the partial derivatives of $w$ with respect to the $x_i$ and the $u_j$.
\begin{proposition}
Suppose the ideal $\J_p$ is non-zero. If $\I_p \subseteq \sqrt{\partial w, \J_p}$ then
\[
\dsing{[Y_p/ S \times \gm]} \cong \dbcoh{Z_p}.
\]
\label{prop: RG Flow}
\end{proposition}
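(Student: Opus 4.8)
The strategy is to reduce everything to Corollary~\ref{cor: compact RG-flow}, taking its ``$X$'' to be $Y_p = Z(w)\cap U_p$, its ``$G$'' to be $S$, and its distinguished open subset ``$U$'' to be $Z(w)\cap V_p$, which sits inside $Y_p$ as an open subscheme because $\J_p\subseteq\I_p$ forces $V_p\subseteq U_p$. Two things must then be checked: first, that $Z(w)\cap V_p$, with its $S\times\gm$-action, is equivariantly the space ``$Y$'' of Theorem~\ref{thm: isik-shipman}; and second, that $Z(w)\cap V_p$ contains the singular locus of $Y_p$.

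The first point is immediate from Proposition~\ref{prop: vec indentification}: it identifies $[V_p/S]$ with the total space of $\mathcal E^\vee:=\bigoplus_{j=1}^t\O(\gamma_j)$ over $X_p$, identifies the $R$-charge with the fibrewise dilation, and identifies $w|_{V_p}$ with the function on $\op{tot}\,\mathcal E^\vee$ given by pairing with the section $s:=\bigoplus_j f_j\in\op{H}^0(X_p,\mathcal E)$, where $\mathcal E:=\bigoplus_{j=1}^t\O(\gamma_j^{-1})$ and $Z(s)=Z_p$. The Koszul complex on $s$ is exact because the $f_j$ cut out a complete intersection in the smooth $\A^n$, hence form a regular sequence on the open $V_p^x$. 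So Theorem~\ref{thm: isik-shipman} applies to this data, and on the open piece it gives $\dsing{[(Z(w)\cap V_p)/(S\times\gm)]}\cong\dbcoh{Z_p}$.

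The heart of the argument --- and the only place the hypothesis $\I_p\subseteq\sqrt{\partial w,\J_p}$ is used --- is the second point. Since $U_p$ is open in the smooth affine space $\A^{n+t}$ and $Y_p=Z(w)\cap U_p$ is cut out there by the single equation $w$, the Jacobian criterion gives $\op{Sing}(Y_p)=Z(w,\partial w)\cap U_p$; and because $w$ is $R$-charge homogeneous of degree $1$, Euler's relation $w=\sum_j u_j\,\partial w/\partial u_j$ forces $Z(\partial w)\subseteq Z(w)$, so in fact $\op{Sing}(Y_p)=Z(\partial w)\cap U_p$. Now $Y_p\setminus(Z(w)\cap V_p)=Y_p\cap Z(\J_p)$, so it suffices to prove $\op{Sing}(Y_p)\cap Z(\J_p)=\emptyset$, i.e. $Z(\partial w)\cap Z(\J_p)\cap U_p=\emptyset$. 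As $U_p=\A^{n+t}\setminus Z(\I_p)$, this is the assertion $Z(\partial w,\J_p)\subseteq Z(\I_p)$, which by the Nullstellensatz over the algebraically closed field $\kappa$ is exactly $\I_p\subseteq\sqrt{\partial w,\J_p}$, our hypothesis. I expect this computation to be the main content; the rest is formal.

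With both points in hand, Corollary~\ref{cor: compact RG-flow} --- whose proof restricts to the open substack via Proposition~\ref{prop: local singularity} (these quotient stacks have enough locally free sheaves) and then applies Theorem~\ref{thm: isik-shipman} --- yields
\[
\dsing{[Y_p/(S\times\gm)]}\;\cong\;\dsing{[(Z(w)\cap V_p)/(S\times\gm)]}\;\cong\;\dbcoh{Z_p},
\]
which is the claim. The only routine bookkeeping to dispatch is that $Z(w)$, $U_p$ and $V_p$ are genuinely $S\times\gm$-invariant --- the former as a semi-invariant zero locus, the latter two as complements of monomial ideals in the $x_i,u_j$ --- so that the quotient stacks above make sense and Corollary~\ref{cor: compact RG-flow} applies verbatim; I foresee no real obstacle there.
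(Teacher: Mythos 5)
Your proof is correct and follows exactly the same route as the paper's (much terser) argument: translate the radical-ideal hypothesis via the Nullstellensatz into the statement that the singular locus of $Y_p$ lies in $V_p$, then invoke Proposition~\ref{prop: vec indentification} to put $Y_p\cap V_p$ in the form required by Theorem~\ref{thm: isik-shipman} and conclude with Corollary~\ref{cor: compact RG-flow}. The details you supply (the Euler relation for the $R$-charge grading, Koszul exactness from the complete-intersection hypothesis, and the equivariance bookkeeping) are all correct elaborations of what the paper leaves implicit.
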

\begin{proof}
Since, $\I_p \subseteq \sqrt{\partial w, \J_p}$ this implies that the singular locus of $w|_{U_p}$ is contained in $V_p$.  By Proposition~\ref{prop: vec indentification} we may apply Corollary~\ref{cor: compact RG-flow} with $X = Y_p$ and $U = Y_p \cap V_p$ to obtain the result.
\end{proof}

\begin{corollary}
Assume that $S$ satisfies the quasi-Calabi-Yau condition and that $\J_p$ and $\J_q$ are non-zero.  If $\I_p \subseteq \sqrt{\partial w, \J_p} \tand \I_q \subseteq \sqrt{\partial w, \J_q}$ for some $1 \leq p,q \leq r$,
then
\[
\dbcoh{Z_p} \cong \dbcoh{Z_q}.
\]
\label{cor: derivedequiv}
\end{corollary}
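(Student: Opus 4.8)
The plan is to chain together the two equivalences supplied by Proposition~\ref{prop: RG Flow} applied separately at the chambers $\sigma_p$ and $\sigma_q$, bridged by the wall-crossing equivalence of Theorem~\ref{thm: HW}. First I would invoke Proposition~\ref{prop: RG Flow} at $p$: since we are given $\I_p \subseteq \sqrt{\partial w, \J_p}$, the proposition yields
\[
\dsing{[Y_p/ S \times \gm]} \cong \dbcoh{Z_p}.
\]
Symmetrically, the hypothesis $\I_q \subseteq \sqrt{\partial w, \J_q}$ gives
\[
\dsing{[Y_q/ S \times \gm]} \cong \dbcoh{Z_q}.
\]
It therefore suffices to produce an equivalence $\dsing{[Y_p/ S \times \gm]} \cong \dsing{[Y_q/ S \times \gm]}$.

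For that I would appeal to Theorem~\ref{thm: HW}, the Herbst--Walcher (VGIT) wall-crossing statement. The two open sets $U_p, U_q \subseteq X = \A^{n+t}$ are exactly the semistable loci attached to characters $\chi_p, \chi_q$ lying in the chambers $\sigma_p, \sigma_q$ of the GKZ fan for the $\widetilde{S}$-action, and the superpotential $w = \sum_j u_j f_j$ is the $S$-invariant, $R$-charge weight-one function described in Section~\ref{sec: algebraic}. The zero loci $Z_p$ appearing in that theorem's notation are precisely our $Y_p := Z(w) \cap U_p$ and $Y_q := Z(w) \cap U_q$. Hence, provided $S$ satisfies the quasi-Calabi-Yau condition, Theorem~\ref{thm: HW} furnishes
\[
\dsing{[Y_p/ S \times \gm]} \cong \dsing{[Y_q/ S \times \gm]}.
\]
Composing the three equivalences gives $\dbcoh{Z_p} \cong \dbcoh{Z_q}$, as claimed.

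The one genuine gap in this outline is the hypothesis of Theorem~\ref{thm: HW}: it requires the quasi-Calabi-Yau condition $\times|_{\widetilde{S}} = \mathbf{1}$, which is not among the stated hypotheses of the corollary. I expect this to be the main point to address, and I would handle it either by adding ``if $S$ satisfies the quasi-Calabi-Yau condition'' to the corollary's hypotheses (matching the way Theorem~\ref{thm: HW} is phrased), or by observing that in the intended application the condition is automatic — the superpotential's degree constraints ($\sum \op{deg}(x_i) + \sum \op{deg}(u_j)$ torsion, from the Remark following the quasi-Calabi-Yau definition) together with the fact that each $f_j$ is semi-invariant of weight $\gamma_j^{-1}$ force $\times|_{\widetilde{S}}$ to be trivial. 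A secondary, routine check is that the objects called ``$Z_p$'' in Theorem~\ref{thm: HW} coincide with the $Y_p$ of this section and that the chamber-to-triangulation bookkeeping lines up; this is immediate from the definitions of $\I_p$ and $U_p$ in \eqref{eq: I}. With those identifications in place the composition of equivalences is formal, so no further calculation is needed.
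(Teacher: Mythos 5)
Your proposal is exactly the paper's proof: Proposition~\ref{prop: RG Flow} at $p$, the Herbst--Walcher wall-crossing equivalence of Theorem~\ref{thm: HW} in the middle (applied to $Y_p = Z(w)\cap U_p$ and $Y_q = Z(w)\cap U_q$, which are indeed what that theorem calls $Z_p, Z_q$), and Proposition~\ref{prop: RG Flow} again at $q$. The quasi-Calabi-Yau condition you flag is a genuine implicit standing hypothesis of this section that the paper's own proof also leaves unstated, and your observation that it holds in the intended applications is the right way to discharge it.
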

\begin{proof}
We have
\begin{align*}
 \dbcoh{Z_p} & \cong  \dsing{[Y_p/ S \times \gm]} \\
 & \cong  \dsing{[Y_q/ S \times \gm]} \\
 & \cong \dbcoh{Z_q},
   \end{align*}
   where the first line is Proposition~\ref{prop: RG Flow}, the second line is Theorem~\ref{thm: HW}, and the third line is Proposition~\ref{prop: RG Flow} again.
\end{proof}

\begin{remark}
For each $p$, the condition that $\I_p \subseteq \sqrt{\partial w, \J_p}$ is a locally closed condition on the set of $t$-tuples $f_j$ of $S$-invariant functions.  Hence, given two partial compactifications of vector bundles related by GIT, there is a locally-closed family of zero-sections of each bundle which are derived equivalent.
\end{remark}

\begin{remark}
For a single wall-crossing in the GKZ fan of a toric variety, one can look at the corresponding wall crossing in the GKZ fan of the total space of the canonical bundle.  The condition that $\I_p \subseteq \sqrt{\partial w, \J_p} \tand \I_q \subseteq \sqrt{\partial w, \J_q}$ is then equivalent to the hypersurface $w$ being nonsingular on the contracting loci.  These wall-crossings were first described independently by Dolgachev and Hu, and Thaddeus \cite{DH98, Tha96} and by Gel'fand, Kapranov, and Zelevinsky in the toric setting \cite{GKZ}.  For an explanation of terminology see \cite{BFK12}, especially Proposition 5.1.4 where the relevant contracting loci are described.
\label{rem: smooth version}
\end{remark}

\section{Derived Equivalence of Berglund-H\"ubsch-Krawitz Mirrors}
\label{sec: BHK}

Consider two quasihomogeneous polynomials $F_A$ and $F_{A'}$ with the same weights $q_i$.  Then,$J_{F_A} = J_{F_{A'}}$.  Fix a subgroup $J_{F_A} \subseteq SL(F_A) \cap SL(F_{A'})$ as in Equation~\eqref{groupGdef}.  Then, one can define the Calabi-Yau orbifolds $Z_{A, G}$ and $Z_{A', G}$ as well as the BHK mirrors $Z_{A^T, G^T_A}$ and $Z_{(A')^T, G^T_{A'}}$.

In this section, we prove the following theorem:

\begin{theorem}
Suppose we have two quasihomogeneous polynomials $F_A$ and $F_{A'}$ such that they have same weights $q_i$, i.e., $J_{F_A} = J_{F_{A'}}$. Choose a group $G$ so that $J_{F_A} \subset G \subset SL(F_A)$ and $J_{F_{A'}} \subset G\subset SL(F_{A'})$. Take $\overline{G} = G/J_{F_A}$. Then $Z_{A, G}$ and $Z_{A', G}$ are two hypersurfaces in $\P(q_0, \ldots, q_n) /\overline{G}$.  If the coarse moduli space of $\P(q_0, \ldots, q_n) / \overline{G}$ is Gorenstein Fano, then the BHK mirrors $Z_{A^T, G^T_A}$ and $Z_{(A')^T, G^T_{A'}}$ are derived equivalent.
\label{thm: BHK main result}
\end{theorem}

 \begin{proof}
This is the special case of Corollary~\ref{BHKDEQUIVGORFAM} where $b_i=1, c=0$.
 \end{proof}

The theorem above is a derived analogue to the birationality of Berglund-H\"ubsch-Krawitz mirrors (Theorem~\ref{thm: birationality BHK}).  We begin the proof by reducing to the case where $F_A - F_{A'}$ is a difference of two monomials.  For this reduction step, we introduce the notion of a Kreuzer-Skarke cleave.

\subsection{Kreuzer-Skarke Cleaves}
\label{sec: KS cleaves}

In this subsection, we show that if the coarse moduli space of $\P(q_0, \ldots, q_n) / \overline{G}$ is Gorenstein Fano, then for any two quasihomogeneous polynomials $F_A$ and $F_{A'}$ with the weights $q_i$, there is a sequence of quasihomogeneous polynomials, 
\[
F_A = F_{A_1}, ..., F_{A_t} = F_{A'}, 
\]
such that $F_{A_i} - F_{A_{i+1}}$ is a difference of two monomials.  This uses the classification of Kreuzer-Skarke polynomials i.e. quasihomogeneous, quasismooth potentials in $n+1$ variables with $n+1$ monomials terms:
 \begin{theorem} [Kreuzer-Skarke Classification \cite{KS92}]
Up to relabelling,  all Kreuzer-Skarke polynomials can be written as a sum of the following polynomials in separate variables:
\begin{enumerate}[i.]
\item Fermat: $W_{\text{fermat}} : = x^a$;
\item Loops of length $\ell >2$: $W_{\text{loop}} := x_1^{a_1} x_2 + x_2^{a_2}x_3 + \ldots + x_{\ell-1}^{a_{\ell-1}} x_\ell + x_\ell^{a_\ell}x_1$; and
\item Chains of length $\ell >2$: $W_{\text{chain}} : =  x_1^{a_1} x_2 + x_2^{a_2}x_3 + \ldots + x_{\ell-1}^{a_{\ell-1}} x_\ell + x_\ell^{a_\ell}$.
\end{enumerate}
\label{thm: KS repeat}
\end{theorem}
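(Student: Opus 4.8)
The plan is to treat this as a purely combinatorial classification: I extract the structure of $F_A$ from an incidence pattern on its variables and monomials, and then force that pattern into the three listed shapes using invertibility of $A$ together with the isolated-singularity (quasismoothness) hypothesis. Throughout I write $m_i = \prod_j x_j^{a_{ij}}$ for the $i$-th monomial, so the rows of $A$ record which variables occur in which monomial; since every $a_{ij}\ge 0$, the statement ``$x_j$ occurs in $m_i$'' means precisely $a_{ij}\ge 1$.

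First I would extract a distinguished variable for each monomial. Because $A$ is invertible, $\det A = \sum_\sigma \mathrm{sgn}(\sigma)\prod_i a_{i\sigma(i)} \neq 0$, so some permutation $\sigma$ has $\prod_i a_{i\sigma(i)}\neq 0$, i.e. $a_{i\sigma(i)}\ge 1$ for every $i$. Permuting the monomials by $\sigma^{-1}$ (an allowed relabelling) I may assume $a_{ii}\ge 1$ for all $i$, so each variable $x_i$ is the \emph{head} of exactly one monomial $m_i$, occurring there to a positive power.

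The crux is the shape lemma: nondegeneracy forces each $m_i$ to be either a pure power $x_i^{a_{ii}}$ (Fermat type) or $x_i^{a_{ii}}x_{p(i)}$ for a single other variable $x_{p(i)}$ appearing to the first power, and it forbids two distinct monomials from sharing the same tail variable (no \emph{branching}). Both are proved by contradiction through the Jacobian ideal: if a monomial carried a third variable, or carried its tail to a power $\ge 2$, or if $m_i = x_i^{a_{ii}}x_k$ and $m_{i'} = x_{i'}^{a_{i'i'}}x_k$ shared a tail $x_k$, then setting the offending variable(s) to zero kills the corresponding partials identically and leaves the remaining partials with a common zero along a positive-dimensional locus, contradicting that $F_A$ has its only critical point at the origin. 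For instance the branching configuration $x_1^a x_3 + x_2^b x_3 + x_3^c$ acquires a whole critical curve $\{x_1^a = -x_2^b\}$ inside $\{x_3 = 0\}$, and a tail to power $\ge 2$ makes every partial divisible by that variable, so its vanishing hyperplane becomes critical. This step is the main obstacle: it is exactly the technical heart of Kreuzer--Skarke's nondegeneracy analysis, and carrying it out for an arbitrary monomial tested against the entire potential (not merely these toy cases) is where the real work lies.

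Granting the shape lemma, the remainder is graph theory. I define a directed graph $D$ on the vertex set $\{0,\dots,n\}$ with an edge $i\to p(i)$ whenever $m_i$ has a tail. The shape lemma gives out-degree $\le 1$ at every vertex and the no-branching statement gives in-degree $\le 1$, so each connected component of $D$ is a simple directed path or a simple directed cycle. A cycle $i_1\to\cdots\to i_\ell\to i_1$ yields a loop; a path whose terminal vertex has out-degree $0$ (its monomial being a pure power) yields a chain ending in a Fermat monomial, and a single vertex of out-degree $0$ is itself a Fermat. Finally I would check that the components partition the variables: each $x_i$ heads the unique monomial $m_i$ of its own component and, by the shape lemma and no-branching, occurs only in $m_i$ and in the at-most-one monomial pointing to it, all lying in that same component. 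Hence $F_A = \sum_C W_C$, a sum over components in disjoint sets of variables with each $W_C$ of Fermat, loop, or chain type, which is the asserted normal form.
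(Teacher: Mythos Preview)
The paper does not supply a proof of this statement; it is quoted with attribution to \cite{KS92} and used as input for the rest of Section~\ref{sec: KS cleaves}. So there is no paper-side argument to compare your proposal against.

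That said, your outline tracks the standard strategy of the original Kreuzer--Skarke argument and is architecturally sound. The permutation trick arranging $a_{ii}\ge 1$ via a nonvanishing term of $\det A$ is correct, and once the shape lemma is granted the graph-theoretic finish (out-degree $\le 1$ and in-degree $\le 1$ force each component to be a simple path or cycle) is exactly right. You are also correct that the shape lemma carries all the content. Your toy examples are suggestive but, as you concede, do not settle the general case: a defective monomial $m_i$ sits inside the full potential $F_A$, and one must rule out the possibility that the remaining monomials repair the singularity along the coordinate subspace you restrict to. (Your phrasing ``makes every partial divisible by that variable'' is also a bit loose---only the partials of $m_i$ itself acquire that divisibility, so one still has to control the contributions of the other $m_j$.) Kreuzer and Skarke handle this by tracing chains of pointers, essentially your map $p$, and arguing inductively that any deviation from the Fermat/loop/chain pattern forces a positive-dimensional critical locus after restricting to a suitable coordinate subspace. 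Your sketch defers precisely this induction, so as written it is an outline rather than a proof; filling it in amounts to reproducing the argument of \cite{KS92}.
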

\noindent The polynomials in the list above are called \newterm{atomic types}.
 In the original Kreuzer-Skarke paper, the diagrams for such atomic types are the following
\begin{enumerate}
\item Fermat:
\begin{center}
\begin{tikzpicture}[
      scale=1,
      level/.style={->,>=stealth,thick}]
	\node (a) at (-1,0) {$\bullet_{a}$};
\end{tikzpicture}
\end{center}
\item Loop:
\begin{center}
\begin{tikzpicture}[
      scale=1,
      level/.style={->,>=stealth,thick}]
	\node (a) at (-5,0) {$\bullet_{a_1}$};
	\node (b) at (-3,0) {$\bullet_{a_2}$};
	\node (c) at (-1,0) {$\cdots$};
	\node (d) at (1.3,0) {$\bullet_{a_{\ell-1}}$};
	\node (e) at (3.3,0) {$\bullet_{a_\ell}$};
	\draw[level] (3,.2) .. controls (1,1) and (-3,1) .. (-5.1,.2) node at (-1,.95) {};
	\draw[->] (-5,.1) -- (-3.3,.1);
	\draw[->] (-3,.1) -- (-1.3,.1);
	\draw[->] (-.7,.1) -- (.8,.1);
	\draw[->] (1.3,.1) -- (3,.1);

\end{tikzpicture}
\end{center}

\item Chain:
\begin{center}
\begin{tikzpicture}[
      scale=1,
      level/.style={->,>=stealth,thick}]
	\node (a) at (-5,0) {$\bullet_{a_1}$};
	\node (b) at (-3,0) {$\bullet_{a_2}$};
	\node (c) at (-1,0) {$\cdots$};
	\node (d) at (1.3,0) {$\bullet_{a_{\ell-1}}$};
	\node (e) at (3.3,0) {$\bullet_{a_\ell}$};
	\draw[->] (-5,.1) -- (-3.3,.1);
	\draw[->] (-3,.1) -- (-1.3,.1);
	\draw[->] (-.7,.1) -- (.8,.1);
	\draw[->] (1.3,.1) -- (3,.1);

\end{tikzpicture}
\end{center}
\end{enumerate}

To each point in such a diagram, one can associate a monomial $x_i^{a_i}$ or $x_i^{a_i}x_j$ where $a_i$ is the weight at the vertex corresponding to $x_i$ and the factor $x_{j}$ depends on if there's an arrow pointing to the vertex corresponding to the variable $x_j$.  One obtains the three atomic types of polynomials by summing over vertices.  Hence, all Kreuzer-Skarke polynomials can be visualized as disjoint unions of the three types above.

\begin{remark}
If one takes the Kreuzer-Skarke diagram of a polynomial $F_A$, the Kreuzer-Skarke diagram of the transposed polynomial $F_{A^T}$ is the dual diagram resulting from reversing the direction of all the arrows.
\end{remark}

\begin{definition}
Consider two Kreuzer-Skarke polynomials $F_A$ and $F_{A'}$ defining hypersurfaces in $\P(q_0,\ldots, q_n) / \overline{G}$. Suppose that $F_A$ and $F_{A'}$ are related by deleting or adding a single arrow and changing the exponent $a_i$ at the source of the arrow.  In this case we say that the pair $(A, A')$ is a \newterm{Kreuzer-Skarke cleave}.
\end{definition}

\begin{definition}
Given an element $\textbf{b} \in \kappa^{n+1}$ and a diagram as above,
we define a \newterm{generalized Kreuzer-Skarke polynomial} as a polynomial of the form
\[
F_A^{\mathbf{b}} = \sum_{i=0}^n b_i p_i
\]
where $p_i = x_i^{a_i}x_j$ or $p_i = x_i^{a_i}$ according to the prescription above associated to the diagram.
\end{definition}

\begin{remark}
Given a Kreuzer-Skarke cleave $(A,A')$, we may also compare $F_A^{\mathbf{b}}, F_{A'}^{\mathbf{b}}$ for fixed $\textbf{b} \in \kappa^{n+1}$.
\end{remark}

We now digress into the toric interpretation of Kreuzer-Skarke cleaves. First let us recall some notation and review some standard facts. Let $M$ and $N$ be dual lattices, and let $\Sigma$ be the fan in $N_{\R}$ such that the associated toric stack $X_{\Sigma}$ is the fake weighted projective stack $\P(q_0, \ldots, q_n) / \overline{G}$.

 For any projective toric stack $X_\Sigma$ associated to a simplicial fan $\Sigma$, there is a polytope $\Delta \subset M_{\R}$ whose lattice points correspond to global sections of the anticanonical bundle on the stack. One can also construct a fan $\Sigma_K \subset (N\oplus \Z)_{\R}$ associated to the toric stack $X_{\Sigma_K} = \op{tot} \omega_{X_{\Sigma}}$ such that the support $|\Sigma_K|$ is a strictly convex cone. 
 
 To briefly outline its construction, for any ray $\rho \in \Sigma(1)$, we denote by $u_\rho \in N$ the corresponding primitive ray generator. Let $\rho_b$ be the ray in $(N\oplus\Z)_{\R}$ generated by the element $(0,1)$. The fan $\Sigma_K$ is the collection of cones
$$
\overline \sigma = \op{Cone}\left( (u_\rho, 1), (0,1) \ \middle| \ \rho \in \sigma(1)), (\sigma \in \Sigma\right)
$$
and their proper faces. By Lemma 5.17 of \cite{FK14}, we have that the dual cone to the support of the fan $\Sigma_K$ is $|\Sigma_K|^\vee = \cone (\Delta,1) \subset (M\oplus \Z)_{\R}$.

Denote by  $x_\rho$   the variable associated to the ray $\overline \rho := \op{Cone}((u_\rho, 1))$ in $\Sigma_K$ and $u$ the variable associated to the ray $\rho_b$. Recall from Equation~\eqref{latticepointsmonomial}  that the lattice point $\overline m : = (m,1) \in (\Delta,1)$ corresponds to the global function 
$$
x^{\overline m} := u^{\langle (m,1), (0,1)\rangle} \prod_\rho x_\rho^{\langle (m,1), (u_\rho,1)\rangle } = u \prod_\rho x_\rho^{\langle m, u_\rho\rangle + 1}
$$
on the toric variety $X_{\Sigma_K}$. By a minor abuse of notation, we also let $x_\rho$ correspond to the variable of $X_{\Sigma}$ associated to the ray $\rho \in \Sigma(1)$. Here the lattice point $m \in \Delta$ corresponds to the global section of the anticanonical bundle given by 
$$
x^m : = \prod_\rho x_\rho^{\langle m, u_\rho\rangle + 1}.
$$

A generalized Kreuzer-Skarke polynomial $F_A^{\mathbf{b}}$ can now be associated to a collection of $n+1$ lattice points \[
\Xi = \{\overline m_0,\ldots,\overline m_n\}  \subseteq (\Delta,1) \cap M \oplus \Z
\]
together with the $(n+1)$-tuple $\mathbf{b} \in \kappa^{n+1}$ so that the polynomial
$$
F_A^{\mathbf{b}} = \sum _{i=0}^n b_ix^{m_i}.
$$

\begin{lemma}
Let $q_0, ..., q_n$ be a weight sequence and $G$ be a finite group such that the coarse moduli space of $\P(q_0,\ldots, q_n) / \overline{G}$ is Gorenstein Fano.  Then, for each $i$, the monomial
$x_i^{\frac{d}{q_i}}$ is $G$-invariant.
\label{lem: GFermat}
\end{lemma}
\begin{proof}
Since the coarse moduli space of $\P(q_0,\ldots, q_n) / \overline{G}$ is Gorenstein Fano, the
anticanonical polytope $\Delta$ is reflexive, which means that the dual polytope $\Delta^\vee$ is also a lattice polytope. Consequently, the support of the fan for the canonical bundle on $\P(q_0,\ldots, q_n) / \overline{G}$ is the cone over $(\Delta^\vee,1)$. Moreover, the vertices of $(\Delta^\vee,1)$ are the minimal generators $(u_\rho,1)$ of the rays $\overline \rho \in \Sigma_K(1)$.  

By the dual cone correspondence, if $m$ is a vertex of the anticanonical polytope $\Delta$ then $\overline m = (m,1)$ pairs to 0 against a codimension 1 face of $\op{Cone}(\Delta^\vee,1)$.  Since $\Delta$ is a simplex, there exists exactly two rays in $\Sigma_K$ whose minimal generators pair positively with $\overline m$, one of which is $(0,1)$. This means that the global function associated with $ \overline m$ is
$$
x^{\overline m} = u x_\rho^{\langle u_\rho, m\rangle +1}
$$ 
for some $\rho \in \Sigma$, the vertex $m \in \Delta$ is associated to the Fermat monomial
$$
x^m= x_\rho^{\langle u_\rho, m\rangle +1}.
$$
Since the monomial $x_i^{a_i}$ is a section of the anticanonical, it is of degree $d$ hence $a_i = \frac{d}{q_i}$. 

\end{proof}

As a consequence of the lemma, given coefficients $\emph{\textbf{b}} \in \kappa^{n+1}$, we can define a $G$-invariant polynomial,
\[
\sum b_i x_i^{\frac{d}{q_i}},
\]
called the \newterm{generalized Fermat polynomial}.

\begin{proposition}\label{KSCleaveProp}
Fix $\emph{\textbf{b}} \in \kappa^{n+1}$. Take $d = \sum_i q_i$. Suppose the coarse moduli space of $\P(q_0,\ldots, q_n) / \overline{G}$ is Gorenstein Fano. Any $\overline{G}$-invariant generalized Kreuzer-Skarke polynomials of (weighted) degree $d$ with weights $q_0, ..., q_n$ is related to the generalized Fermat polynomial
\[
\sum b_i x_i^{\frac{d}{q_i}}
\]
by a sequence of Kreuzer-Skarke cleaves.
\end{proposition}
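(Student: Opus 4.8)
The plan is to reduce the Kreuzer--Skarke diagram of the given polynomial to a disjoint union of Fermat points by a sequence of single cleaves, each one removing one arrow. First I would record what the Gorenstein hypothesis buys: the coarse space of $\mathbb P(q_0,\dots,q_n)/\bar G$ being Gorenstein forces $q_i\mid d$ for every $i$, so each $d/q_i$ is a positive integer and the target polynomial $\sum_i b_i x_i^{d/q_i}$ is itself a generalized Kreuzer--Skarke polynomial of degree $d$ with the prescribed weights. (This is the only role of the hypothesis, but it is genuinely needed: without it the Fermat exponents need not be integral, cf.\ Example~\ref{NonGorMirror}.) Next, by the Kreuzer--Skarke classification (Theorem~\ref{thm: KS repeat}) the diagram is, up to relabelling, a disjoint union of Fermat points, loops and chains on disjoint sets of variables, and the weights of the variables in a block are pinned down by the exponents in that block alone; since a single cleave alters only one block, it suffices to reduce one loop and one chain in isolation (Fermat points are already in the desired form) and then concatenate the resulting sequences.

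For a chain $b_1x_1^{a_1}x_2+\dots+b_{\ell-1}x_{\ell-1}^{a_{\ell-1}}x_\ell+b_\ell x_\ell^{a_\ell}$ I would induct on $\ell$, the case $\ell=1$ being a Fermat point. For $\ell\ge 2$, delete the terminal arrow $x_{\ell-1}\to x_\ell$: the cleave replaces $x_{\ell-1}^{a_{\ell-1}}x_\ell$ by $x_{\ell-1}^{d/q_{\ell-1}}$ (integral by the Gorenstein step) and fixes every other monomial, producing a chain of length $\ell-1$ on $x_1,\dots,x_{\ell-1}$ together with an isolated Fermat point $x_\ell$; quasihomogeneity in the weights $q$ persists because the relations $a_iq_i+q_{i+1}=d$ for $i<\ell-1$ are inherited and $(d/q_{\ell-1})q_{\ell-1}=d$, so induction applies. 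For a loop $b_1x_1^{a_1}x_2+\dots+b_\ell x_\ell^{a_\ell}x_1$, the single cleave deleting the arrow $x_\ell\to x_1$ replaces $x_\ell^{a_\ell}x_1$ by $x_\ell^{d/q_\ell}$ and converts the loop into a chain of length $\ell$; the identity $d/q_\ell=a_\ell+q_1/q_\ell$ forced by quasihomogeneity is consistent precisely because $q_\ell\mid d$. Then invoke the chain case. Along the way every polynomial produced is again an honest (generalized) Kreuzer--Skarke polynomial, since a disjoint union of atomic types with consistent weights is automatically quasismooth, and $J_{F_A}$ is unchanged since it depends only on the weights and on $d$.

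The one substantive point is that every polynomial in the sequence must still cut out a hypersurface in $\mathbb P(q_0,\dots,q_n)/\bar G$, i.e.\ be $\bar G$-invariant, as required by the definition of a Kreuzer--Skarke cleave. Since each step changes exactly one monomial, into some $x_i^{d/q_i}$, this reduces to the single claim
\[
SL(F_A)\ \subseteq\ \aut_{\diag}\Big(\sum_i x_i^{d/q_i}\Big)\quad\text{inside }(\gm)^{n+1},
\]
i.e.\ that every $(\mu_0,\dots,\mu_n)\in SL(F_A)$ satisfies $\mu_i^{d/q_i}=1$ for all $i$: then the newly created monomial is $G$-invariant and the unchanged ones already are. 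I would establish this from the block decomposition --- $\aut_{\diag}$ of any Kreuzer--Skarke polynomial is the direct sum over blocks of finite cyclic groups, a loop or chain block $B$ contributing the group generated by a single symmetry all of whose coordinates are prescribed powers of one primitive root of unity of order $|\det B|$ --- and then feeding the determinant-one relation defining $SL$, the Calabi--Yau identity $\sum_i q_i=d$, and the divisibilities $q_i\mid d$ into these explicit formulas to force $\operatorname{ord}(\mu_i)\mid d/q_i$ for each $i$. Making this arithmetic work, with the $SL$ constraint coupling all of the blocks at once, is the heart of the argument and the step I expect to be the main obstacle; everything in the previous two paragraphs is formal once it is in hand.
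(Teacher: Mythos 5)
Your overall strategy (delete arrows one at a time, reducing each loop/chain block to Fermat points, with the only real issue being that every intermediate polynomial stays $\bar G$-invariant) is the same as the paper's. But the step you yourself flag as ``the heart of the argument'' --- deriving $\mu_i^{d/q_i}=1$ for all $\mu\in SL(F_A)$ from the block structure, the determinant-one relation, and the divisibilities $q_i\mid d$ --- is not just hard, it is false, and with it falls your claim that $q_i\mid d$ is ``the only role of the hypothesis.'' Concretely, take $F=x_0^2x_1+x_1^3+x_2^4+x_3^{12}$ with weights $(q_0,\dots,q_3)=(4,4,3,1)$ and $d=12$: this is a Calabi--Yau Kreuzer--Skarke polynomial, every $q_i$ divides $d$, and yet the element $\mu=(\zeta_6,\zeta_6^{-2},1,\zeta_6)$ lies in $SL(F)$ (one checks $\mu_0^2\mu_1=\mu_1^3=\mu_2^4=\mu_3^{12}=1$ and $\prod_i\mu_i=1$) while $\mu_0^{d/q_0}=\zeta_6^{3}=-1\neq 1$. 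So $x_0^{d/q_0}$ is not $SL(F)$-invariant, your proposed inclusion $SL(F_A)\subseteq\aut_{\diag}(\sum_i x_i^{d/q_i})$ fails, and indeed $\mathbb P(4,4,3,1)/\overline{SL(F)}$ is not Gorenstein even though $\mathbb P(4,4,3,1)$ is. The Gorenstein condition on the \emph{quotient} is strictly stronger than $q_i\mid d$: it is a condition on the sublattice $M^G$ of $G$-invariant monomials, not only on the weights.

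The repair is to read the invariance statement directly out of the hypothesis rather than trying to prove it: this is exactly what the paper does. Gorenstein (equivalently Fano here) means the anticanonical polytope $\Delta$ of $\mathbb P(q_0,\dots,q_n)/\bar G$ is reflexive with respect to the lattice of $G$-invariant monomials, so its vertices are lattice points of that lattice; and the vertices of this simplex are precisely the points corresponding to the monomials $x_i^{d/q_i}$. Hence each $x_i^{d/q_i}$ is $G$-invariant of degree $d$ \emph{by hypothesis}, and every cleave that deletes an arrow replaces a $G$-invariant monomial by another $G$-invariant monomial. Once you substitute this for your arithmetic lemma, the rest of your block-by-block reduction goes through (and is, if anything, more detailed than the paper's ``delete all arrows in any order'').
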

\begin{proof}

In any Kreuzer-Skarke diagram, a Fermat monomial term corresponds to a point having no outgoing arrow.  Start with any non-Fermat $G$-invariant monomial.  This corresponds to a point in the Kreuzer-Skarke diagram with an arrow coming out of it. A Kreuzer-Skarke cleave which deletes this arrow will give a $G$-invariant polynomial since, by Lemma~\ref{lem: GFermat}, the replaced monomial is $G$-invariant.  Delete all arrows in any order to get a sequence of Kreuzer-Skarke cleaves that relate $F_A$ with a Fermat polynomial.
\end{proof}

\subsection{Derived Equivalence of BHK Mirrors Related by a Kreuzer-Skarke Cleave}

We now will prove that if $F_A$ and $F_{A'}$ are related by a Kreuzer-Skarke cleave, then their BHK mirrors are derived equivalent.  The method is partially toric and will use results from Section 5 of \cite{FK14}.  We refer the reader there for a more thorough dictionary between the algebraic and toric language. A technical tool in the proof will be the use of two carefully chosen regular triangulations which correspond to different chambers for a given geometric invariant theory problem.

Recall that a Kreuzer-Skarke polynomial 
$$
F_A := \sum_{i=0}^n x^{m_i}
$$
corresponds to a collection of  $n+1$ lattice points
\[
\Xi = \{\overline m_0,\ldots,\overline m_n\}  \subseteq (\Delta,1) \cap M \oplus \Z.
\]
For notational clarity, we take $m_i$ to be the lattice point associated to the vertex $i$ in the Kreuzer-Skarke diagram, i.e., $x^{m_i}$ is of the form $x_i^{a_i}$ or $x_i^{a_i}x_j$ for some $j$. Now let $(A,A')$ be a Kruzer-Skarke cleave.  This gives us a new lattice element $\overline m_k' \in (\Delta,1) \cap (M\oplus \Z)$ defined by
$$
F_A - F_{A'} = x^{m_k'} - x^{m_k}.
$$

This gives two new point collections denoted by,
\[
\Xi' := (\Xi \backslash \{\overline m_k\}) \cup \{\overline m_k'\}.
\]
and
\[
\nu := \{(0,1), \overline m_0, ..., \overline m_n, \overline m_k' \} = \Xi \cup \Xi' \cup \{(0,1)\} \subseteq  (\Delta,1) \cap  M \oplus \Z.
\]

Recall the definition of a triangulationof $\nu$  (in $M \oplus \Z$):
\begin{definition}[\S 15.2 of \cite{CLS}]\label{defn:triangulation}
A \newterm{triangulation} of a point collection $\nu \in M \oplus \Z$ is a collection of simplices $\mathcal{T}$ so that: 
\begin{enumerate}
\item Each simplex in $\mathcal{T}$ is codimension one in $M_{\R} \oplus \R$ with vertices in $\nu$.
\item The intersection of any two simplices in $\mathcal{T}$ is a face of each.
\item The union of the simplices in $\mathcal{T}$ is $\op{Conv}(v \ | \ v \in \nu)$.
\end{enumerate}
\end{definition}

We can define two triangulations of $\nu$ (called $\mathcal T, \mathcal T'$) as follows. 
Let
$$
\mathcal{C} := \{ \op{Conv}(\{\overline m_i\}_{i\in I},(0,1)) \  |  \ I \subset \Xi, |I| = n\},
$$
i.e., $\mathcal C$ is the collection of simplices generated by any proper face of the convex hull of $n$ elements of the set $\Xi$ together with the element $(0,1)$.  Also, let
\[
 \mathcal S := \left\{ \operatorname{Conv}(\{\xi\}_{\xi \in I}, \overline m_k') | \  I\subseteq \Xi, |I| = n, \operatorname{Conv}(\{\xi\}_{\xi \in I}, \overline m_k') \cap \op{int}(\operatorname{Conv}(\Xi)) = \emptyset \right\}.
 \]
Note that elements of $\mathcal{S}$ need not be $n$-dimensional, hence we define the subset,
$$
\mathcal{S}_n := \{ T \in \mathcal{S} \ | \ \dim T = n\}.
$$
The triangulation $\mathcal T$ is built from $\mathcal C$ and $\mathcal S_n$,
\[
\T :=
\begin{cases}
\mathcal C & \tif \overline m_k' \in \operatorname{Conv}(\Xi) \\
\mathcal C \cup \mathcal S_n & \text{otherwise}. \\
\end{cases}
\]

We now define the triangulation $\mathcal T'$ analogously.  That is, we define
$$
\mathcal{C'} := \{ \op{Conv}(\{\overline m_i \}_{i\in I},(0,1))  \  |  \ I \subset \Xi', |I| = n\},
$$
$$
 \mathcal S' := \left\{ \operatorname{Conv}(\{\xi\}_{\xi \in I}, \overline m_k) \ \middle| \ I\subseteq \Xi',  |I| = n,  \operatorname{Conv}(\{\xi\}_{\xi \in I}, \overline m_k) \cap \op{int}(\operatorname{Conv}(\Xi')) = \emptyset \right.\},
 $$
 $$
\mathcal{S}_n' := \{ T \in \mathcal{S}' \ | \ \dim T = n\},
$$
\[
\T' :=
\begin{cases}
\mathcal C' & \tif \overline m_k \in \operatorname{Conv}(\Xi') \\
\mathcal C' \cup \mathcal S'_n & \text{otherwise}. \\
\end{cases}
\]

\begin{lemma}
Given a Kreuzer-Skarke cleave $(A,A')$ associated to anticanonical sections as above, the corresponding sets of simplices $\mathcal T, \mathcal T'$ are regular triangulations of $\nu$.
\label{lem: KS triangulations}
\end{lemma}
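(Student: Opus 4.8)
The plan is to verify directly that $\mathcal{T}$ (and then $\mathcal{T}'$ by the same argument) satisfies the two defining properties of a regular triangulation of $\nu$: first that it is a triangulation, i.e. a simplicial complex whose simplices cover $\operatorname{Conv}(\nu)$ and meet along faces; and second that it is regular, i.e. induced by the lower faces of a lifting of the point configuration to one higher dimension. First I would observe that $\nu$ lives in the hyperplane at height $1$ in $M\oplus\Z$, so $\operatorname{Conv}(\nu)$ is an $n$-dimensional polytope, and that the point $(0,1)$ is an interior point of the simplex $\operatorname{Conv}(\Xi)$: indeed the Calabi--Yau/Gorenstein hypotheses force $\sum q_i a_i$-type relations placing the ``origin'' strictly inside the reflexive simplex spanned by the Fermat-type vertices, and $\Xi$ spans a simplex because $A$ is invertible (it is a Kreuzer--Skarke polynomial). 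This is the geometric heart of the construction: $\mathcal{C}$ is precisely the ``cone from the interior point $(0,1)$ over the boundary of the simplex $\operatorname{Conv}(\Xi)$'' — a standard central (placing) subdivision — and hence a triangulation of $\operatorname{Conv}(\Xi)$.

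Next I would handle the two cases. If $m' = m_k' \in \operatorname{Conv}(\Xi)$, then $\operatorname{Conv}(\nu) = \operatorname{Conv}(\Xi)$ and $\mathcal{T} = \mathcal{C}$; the only subtlety is that $m_k'$ is one of the points being triangulated, so I must check that $m_k'$ does not lie in the interior of any simplex of $\mathcal{C}$ but rather on a face — this follows because $(A,A')$ is a Kreuzer--Skarke cleave, so $m_k'$ and $m_k$ differ in a single coordinate corresponding to the source of one arrow, which pins $m_k'$ onto the face $\operatorname{Conv}(\Xi\setminus\{m_k\}, (0,1))$ (the facet ``opposite'' $m_k$ through the interior point), so $\mathcal{C}$ as defined already refines at $m_k'$. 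If instead $m' \notin \operatorname{Conv}(\Xi)$, then $\operatorname{Conv}(\nu) = \operatorname{Conv}(\operatorname{Conv}(\Xi) \cup \{m'\})$ is obtained by adding a ``tent'' over the part of $\partial\operatorname{Conv}(\Xi)$ visible from $m'$, and $\mathcal{S}$ is exactly the set of simplices $\operatorname{Conv}(I, m')$ for $I$ ranging over faces of $\Xi$ on that visible side (the condition $\operatorname{Conv}(I,m')\cap\operatorname{int}\operatorname{Conv}(\Xi)=\emptyset$ selects precisely the visible faces). Then $\mathcal{T} = \mathcal{C}\cup\mathcal{S}$ is the union of the central triangulation of $\operatorname{Conv}(\Xi)$ with the fan of visible-facet cones from $m'$ — these glue along $\partial\operatorname{Conv}(\Xi)$ into a triangulation of $\operatorname{Conv}(\nu)$, again by a standard placing/pushing argument.

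For regularity, I would exhibit an explicit convex lifting function $h\colon \nu \to \R$. The natural choice: lift $(0,1)$ very far down (say $h((0,1)) = -N$ for $N\gg 0$), lift all of $\Xi$ to height $0$, and lift $m'$ to height $0$ as well if $m'\in\operatorname{Conv}(\Xi)$, or to a small positive height $\epsilon$ otherwise. With $(0,1)$ pushed down, the lower hull over $\operatorname{Conv}(\Xi)$ becomes the cone from $(0,1)$ over the (flat, height-$0$) boundary, reproducing $\mathcal{C}$; the small perturbation at $m'$ produces exactly the visible-face tent $\mathcal{S}$ for appropriate sign of $\epsilon$. One then checks this $h$ is strictly convex with respect to the claimed triangulation, i.e. each maximal simplex is a genuine lower face and no two are coplanar on the lower hull — a finite linear-algebra verification using that $\Xi$ is affinely independent (invertibility of $A$) and that $(0,1)$ is interior. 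I expect the main obstacle to be the bookkeeping in the cleave case $m_k'\in\operatorname{Conv}(\Xi)$: one must argue carefully that swapping $m_k$ for $m_k'$ (a single-arrow, single-weight change in the Kreuzer--Skarke diagram) keeps the configuration in ``general enough'' position that $m_k'$ sits on a codimension-one face of $\mathcal{C}$ and the resulting refinement is still simplicial — this is where the precise combinatorics of atomic types (Fermat/loop/chain) and the Gorenstein (reflexivity) hypothesis, via the earlier remark that vertices of $\Delta$ correspond to Fermat monomials, get used. The same argument applies mutatis mutandis to $\mathcal{T}'$, $\Xi'$, $\mathcal{C}'$, $\mathcal{S}'$ with the roles of $m$ and $m'$ exchanged.
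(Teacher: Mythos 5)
Your verification that $\mathcal T$ is a triangulation is essentially the paper's argument: $(0,1)$ interior to the affinely independent simplex $\op{Conv}(\Xi)$ makes $\mathcal C$ the central subdivision, and $\mathcal S$ is the tent over the facets of $\op{Conv}(\Xi)$ visible from $m'$, glued along $\partial\op{Conv}(\Xi)$ (the paper phrases the covering step via the line from a point $p\notin\op{Conv}(\Xi)$ to $m_k'$). Where you diverge is regularity: the paper does not construct a lifting at all, but invokes Lee's theorem that every triangulation of an $n$-dimensional polytope on at most $n+3$ vertices is regular --- which applies since $|\nu|=n+3$ --- so regularity is free once the triangulation property is checked. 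That shortcut is worth knowing, because your explicit lifting is where the proposal actually breaks.

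Concretely, the heights $h((0,1))=-N$ with $N\gg 0$, $h|_{\Xi}=0$, $h(m')=\epsilon$ with $\epsilon$ small do \emph{not} induce $\mathcal T$. For a facet $F$ of $\op{Conv}(\Xi)$ visible from $m'$, the supporting hyperplane of the lower-hull cell $\op{Conv}(F\times\{0\},\,(0,1)\times\{-N\})$ rises to height on the order of $N$ at the horizontal position of $m'$ (since $m'$ and $(0,1)$ lie on opposite sides of $\op{aff}(F)$); lifting $m'$ only to a small $\epsilon$ places it strictly below that hyperplane, so the cell $\op{Conv}(F,(0,1))$ is destroyed and replaced by cells containing both $m'$ and $(0,1)$ --- which never occur together in $\mathcal T$. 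A one-dimensional check already shows this: for $\Xi=\{0,1\}$, interior point $\tfrac12$, $m'=2$, the lifting $(0,0),(1,0),(\tfrac12,-N),(2,\epsilon)$ has lower hull with cells $[0,\tfrac12]$ and $[\tfrac12,2]$, losing the vertex $1$, whereas $\mathcal T=\{[0,\tfrac12],[\tfrac12,1],[1,2]\}$. To make your construction work you would need $h(m')$ large compared with $N$ (so that all cells of $\mathcal C$ survive and $m'$ only cones over the visible facets at height $0$), and you would still have to verify strict convexity; citing Lee's theorem, as the paper does, avoids all of this. Your treatment of the case $m'\in\op{Conv}(\Xi)$ is also shakier than needed: there is no reason $m_k'$ must land on the particular face $\op{Conv}(\Xi\setminus\{m_k\},(0,1))$ (in the paper's cubic example it lies on a boundary edge of the outer simplex), but nothing needs to be ``refined'' there --- a regular triangulation of the configuration $\nu$ is simply allowed not to use $m'$ as a vertex.
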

\begin{proof}
By Theorem 4 of \cite{Lee90}, all triangulations of point collections of cardinality at most $n+3$ whose corresponding convex hull is $n$-dimensional are regular.  Hence, it is enough to show that $\mathcal T, \mathcal T'$ are triangulations.  Since $\mathcal T, \mathcal T'$ are defined completely analogously, we only provide a proof for $\mathcal T$. 

To show that $\mathcal T$ is a triangulation, first, notice that $\op{Conv}(\overline m_i)$ is an $n$-dimensional simplex by using the fact that $A = (\langle \overline m_i, \overline u_{\rho_j}\rangle)_{ij}$ is an invertible matrix. Note that $\sum_i \overline m_i r_i = (0,d^T)$, using the fact that the transposed polynomial $F_{A^T}$ is quasihomogeneous of degree $d^T$  with positive weights $r_i$ so that $\sum_i r_i a_{ij}$ is equal to $d^T$ for all $j$. This implies that $(0,1)$ is in the relative interior of $\op{Conv}(\overline m_i)$. By definition, any $t \in \mathcal{C}$ is the convex hull of a facet of $\op{Conv}(\overline m_i)$ and $(0,1)$, hence is an $n$-dimensional simplex.  By definition, if $t \in \mathcal{S}_n$, then it is an $n$-dimensional simplex. All points in the set $\nu$ pair with $(0,1) \in (N\oplus \Z)_{\R}$ to one, so they are all in an affine hyperplane of $(M\oplus \Z)_{\R}$, making $\mathcal{T}$ satisfy the first condition in Definition~\ref{defn:triangulation}.

Second, it is easy to check that the intersection of any two simplices in $\mathcal T$ is given by the convex hull of the terms in $\nu$ they have in common hence a face of both simplices.  Checking this verifies the second condition in  Definition~\ref{defn:triangulation}.

Third, we need to check that
\[
\bigcup_{t \in \mathcal T} t = \op{Conv}(\nu).
\]
The containment $\bigcup_{t \in \mathcal T} t \subseteq \op{Conv}(\nu)$ is trivial.
The point $(0,1)$ is in the relative interior of the simplex $\op{Conv}(\Xi)$, hence
\[
\bigcup_{t \in \mathcal C} t = \op{Conv}(\Xi).
\]
If $\overline m_k' \in \op{Conv}(\Xi)$, then we are done. Otherwise, we note that $\op{Conv}(\nu)$ is the union of all lines between points in $\op{Conv}(\Xi)$ and $\overline m_k'$.  Take a generic point $p$ in $ \op{Conv}(\nu) \backslash \op{Conv}(\Xi)$. Take $q$ to be the point where the line from $\overline m_k'$ to $p$ first intersects the boundary of $\op{Conv}(\Xi)$.  The point $q$ lies on a facet $F$ of $\op{Conv}(\Xi)$ since $p$ is generic. Since $q$ lies on such a facet $F$ then $p$ is in $\op{Conv}(F, \overline m_k')$ which is in $\mathcal{S}_n$. We have now proven an open dense subset of $\op{Conv}(\nu)$ is contained in $\bigcup_{t \in \mathcal T} t$. Since all $t\in \mathcal{T}$ are closed, the union is closed, so we have the second containment $\bigcup_{t \in \mathcal T} t \supseteq \op{Conv}(\nu)$. This verifies the third condition of Definition~\ref{defn:triangulation}.
\end{proof}

Given a Kreuzer-Skarke polynomial $F_A$, a group $J_{F_A} \subseteq G \subseteq SL(F_A)$ and a vector $(c, \mathbf{b}) \in \kappa^{n+2}$ we can define a \newterm{generalized BHK pencil} to be the 1-parameter family of hypersurfaces
$$
Z_{A,G}^{c, \mathbf{b}} = \left[ \frac{\{F_A^{\mathbf{b}} + c \prod x_i = 0\} }{G \gm}\right] \subseteq \left[\frac{\A^{n+1} \setminus\{0\}}{G\gm}\right] = \frac{\P(q_0, \ldots, q_n) }{\overline{G}}.
$$

Any Kreuzer-Skarke cleave $(A,A')$, by definition, removes an arrow from the diagram for $F_A$ or $F_{A'}$.  The removal of an arrow always results in the formation of a new chain or Fermat diagram.  This chain or Fermat diagram has its tail at the head of the removed arrow.  Let $I$ be the indexing set which records the $a_i$ which this chain passes through.

 \begin{theorem}\label{BHKDEQUIV}
Suppose $(A,A')$ is a Kreuzer-Skarke cleave where $F_A, F_{A'}$ define anticanonical hypersurfaces in the fake weighted projective stack $\P(q_0, \ldots, q_n) / \overline{G}$.    If $b_i \neq 0$ for $i \in I$, then the generalized BHK mirror pencils $Z_{A^T, G^T_A}^{c, \mathbf{b}}$ and $Z_{(A')^T, G^T_{A'}}^{c, \mathbf{b}}$ are (memberwise) derived equivalent, i.e.,
 $$
 \dbcoh{Z_{A^T,G^T_A}^{c, \mathbf{b}}} \cong \dbcoh{Z_{(A')^T, G^T_{A'}}^{c,\mathbf{b}}}.
 $$
 \end{theorem}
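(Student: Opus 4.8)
The plan is to apply the machinery of Section~\ref{sec: algebraic} to the total space of the canonical bundle of $\mathbb{P}(q_0,\ldots,q_n)/\bar G$, realizing each BHK mirror pencil as a category of singularities of a superpotential on a GIT quotient and then using the Herbst-Walcher wall-crossing equivalence (Theorem~\ref{thm: HW}) to pass between the two sides of the cleave. Concretely, I would first set up the combinatorial data: following the toric reinterpretation of Section~\ref{sec: Background}, the mirror $Z_{A^T,G^T_A}$ lives in a quotient of weighted projective space whose fan is the star subdivision of $\op{Cone}(v_j)$, and the anticanonical total space corresponds to a torus $S$ acting on $\mathbb{A}^{n+t}$ (with $t$ the number of $u$-variables needed to encode the generalized polynomial $F_{A^T}^{\mathbf b}$ plus the pencil term $c\prod x_i$). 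The point set $\nu(S)$ of Section~\ref{sec: algebraic} is precisely the set $\nu = \Xi \cup \Xi' \cup \{(0,1)\}$ appearing in Lemma~\ref{lem: KS triangulations}, and the two regular triangulations $\mathcal T, \mathcal T'$ constructed there determine two chambers $\sigma_p, \sigma_q$ in the GKZ fan via the irrelevant ideals $\I_p, \I_q$ of \eqref{eq: I}. The superpotential is $w = \sum_j u_j f_j$ where the $f_j$ encode the monomials $x^{m_i}$ and $x^{m_k'}$ together with $\prod x_i$, weighted by the coordinates of $(c,\mathbf b)$.

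\textbf{Main steps.} I would then verify, in order: (1) that $S$ satisfies the quasi-Calabi-Yau condition --- this is exactly the Calabi-Yau/Gorenstein hypothesis, since all the $m_i$ and $m_k'$ lie in $(\Delta,1)$, so the relevant degrees sum to the anticanonical class and the multiplication map is trivial on $\widetilde S$; (2) that for the chamber $\sigma_p$ corresponding to $\mathcal T$, the complete intersection $Z_p \subseteq X_p$ produced by Proposition~\ref{prop: vec indentification} is precisely the generalized BHK pencil $Z_{A^T,G^T_A}^{c,\mathbf b}$ (and similarly $Z_q \cong Z_{(A')^T,G^T_{A'}}^{c,\mathbf b}$ for $\mathcal T'$) --- this is a matter of matching the star-subdivided fan of the canonical bundle with $V_p^x$ and checking that the section $\bigoplus f_j$ cuts out the right hypersurface; and (3) the crucial nondegeneracy condition $\I_p \subseteq \sqrt{\partial w, \J_p}$ and $\I_q \subseteq \sqrt{\partial w, \J_q}$ required to invoke Corollary~\ref{cor: derivedequiv}. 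Granting these three points, Corollary~\ref{cor: derivedequiv} immediately gives $\dbcoh{Z_p} \cong \dbcoh{Z_q}$, i.e., the desired derived equivalence.

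\textbf{The main obstacle.} The hard part will be step (3): showing that the pencil superpotential $w$ is nonsingular along the contracting loci of the wall-crossing, i.e., that $\I_p \subseteq \sqrt{\partial w, \J_p}$. This is where the hypothesis $b_i \neq 0$ for $i \in I$ enters, and where the precise shape of the triangulations $\mathcal T, \mathcal T'$ (in particular the distinction between the cases $m' \in \op{Conv}(\Xi)$ and $m' \notin \op{Conv}(\Xi)$) must be exploited. I would analyze this monomial by monomial: a generator $\prod_{i\notin I_0} x_i \prod_{j \notin J_0} u_j$ of $\I_p$ that is not already in $\J_p$ corresponds to a simplex in $\mathcal T$ involving some of the $u$-variables; on the vanishing locus of the complementary variables, I would show that one of the partial derivatives $\partial w/\partial u_j$ or $\partial w/\partial x_i$ is a unit times a monomial that does not vanish, using that the cleave only modifies a single chain/Fermat piece indexed by $I$ and that the corresponding coefficients $b_i$ are nonzero so the relevant partials of $F^{\mathbf b}_{A^T}$ do not degenerate. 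The combinatorial bookkeeping --- translating "which faces of $\op{Conv}(\nu)$ appear in $\mathcal T$" into "which monomials survive in $\partial w$" --- is the technical heart, but it is precisely parallel (after transposition) to the face structure analyzed in Lemma~\ref{lem: KS triangulations}, so I expect the Kreuzer-Skarke chain/loop/Fermat trichotomy to make it tractable. The remaining ingredient, that $\mathcal T$ and $\mathcal T'$ are genuinely regular triangulations giving rise to adjacent GKZ chambers, is already supplied by Lemma~\ref{lem: KS triangulations}.
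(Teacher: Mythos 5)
Your proposal follows essentially the same route as the paper: the quasi-Calabi-Yau check, the identification of $Z_p$ and $Z_q$ with the two mirror pencils via Proposition~\ref{prop: vec indentification} and the triangulations of Lemma~\ref{lem: KS triangulations}, and the reduction of everything to the radical containments $\I_p \subseteq \sqrt{\partial w, \J_p}$, $\I_q \subseteq \sqrt{\partial w, \J_q}$ needed for Corollary~\ref{cor: derivedequiv}, which the paper isolates as Lemma~\ref{IpInRadicalIdealLemma} and proves by an inductive propagation of the partial derivatives $\partial_j w$ along the chain or loop exactly where you predict the hypothesis $b_i \neq 0$ for $i \in I$ must enter. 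The strategy and the location of the technical difficulty are both correctly identified.
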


 \begin{proof}
Recall we can write the two superpotentials $F_A$ and $F_{A'}$ as 
$$
F_A^{\mathbf{b}} := \sum_{i=1}^n b_i x^{m_i}
\tand
F_{A'}^{\mathbf{b}} := b_k x^{m_k'} + \sum_{i \neq k} b_i x^{m_i}.
$$
The CY orbifolds $Z_{A,G_A}^{c, \mathbf{b}}$ and $Z_{A', G_{A'}}^{c, \mathbf{b}}$ are hypersurfaces in the same toric variety $\P(q_0, \ldots, q_n)/\overline{G}$ and their BHK mirrors $Z_{A^T, G_A^T}^{c, \mathbf{b}}$ and $Z_{(A')^T, (G_{A'})^T}^{c, \mathbf{b}}$ are hypersurfaces in quotients of weighted projective stacks, say $\P(r_0, \ldots, r_n) / \overline{G_A^T}$ and $\P(r_0', \ldots, r_n') / \overline{(G_{A'})^T}$.  

\begin{caution}
As in Subsection~\ref{toricreinterpretation}, the CY-orbifolds $Z_{A^T, G^T_A}^{c, \mathbf{b}}$ and $Z_{(A')^T, G^T_{A'}}^{c, \mathbf{b}}$ are obtained by exchanging the roles of $M$ and $N$.  So now we will construct fans in $(M \oplus \Z)_{\R}$ whereas the standard in the toric literature is to construct fans in $N_{\R}$.
\end{caution}

Without loss of generality, the Kreuzer-Skarke cleave deletes an arrow, i.e., the monomial $x^{m_k'}$ is $x_k^{d/q_k}$ and the monomial $x^{m_k}$ is part of a loop or chain.
Recall the set of $n+3$ points 
\[
\nu := \{(0,1), \overline m_0, ..., \overline m_n, \overline m_k' \} = \Xi \cup \Xi' \cup \{(0,1)\} \subseteq
 (\Delta,1) \cap  M \oplus \Z
\]
and the two regular triangulations $\mathcal T, \mathcal T'$ of $\nu$ (see Lemma~\ref{lem: KS triangulations}).

Now, we have an exact sequence
\begin{align*}
0 \to N \oplus \Z & \to \Z^\nu \to \op{coker} \to 0 \\
\overline{n} & \mapsto \sum_{\rho \in \nu} \langle \overline{n}, \rho \rangle e_\rho.
\end{align*}
Let 
\[
S := \op{Hom}(\op{coker}, \gm)
\]
and define an action of $S$ on $\A^\nu$ given by the natural inclusion $S \hookrightarrow \gm^\nu$.
Notice that $S$ satisfies the quasi-Calabi-Yau condition by Lemma 5.12 of \cite{FK14} since $\nu$ lies in the affine plane $(M,1)$.

We set $y_i$ to be the coordinate of $\A^\nu$ associated to $\overline{m}_i$, $u$ to be the coordinate associated to $(0,1)$, and $y_k'$ to be the coordinate associated to $\overline{m}_k'$.  As both $\T$ and $\T'$ are regular triangulations of $\nu$, we get two irrelevant ideals $\I_p$ and $\I_q$ (as defined in Equation~\eqref{eq: I}) associated to the regular triangulations $\T$ and $\T'$ respectively.

Now, the generators of the subideals $\J_p \subseteq \I_p$ and $\J_q \subseteq \I_q$ as defined in Equation~\eqref{eq: J}  are in 1-1 correspondence to the simplices in $\mathcal{C}$ and $\mathcal{C}'$.  Notice that $\J_p$ and $\J_q$ are non-zero ideals as $\mathcal C, \mathcal C'$ are non-empty.  

Recall the open sets
  $$
  U_p = \op{Spec}(\kappa[y_0, \ldots, y_n, y_i', u]) \setminus Z(\I_p); \qquad U_q =   \op{Spec}(\kappa[y_0, \ldots, y_n, y_i', u]) \setminus Z(\I_q).
  $$
We also have the subsets
$$
V_p =  \op{Spec}(\kappa[y_0, \ldots, y_n, y_i', u]) \setminus Z(\J_p); \qquad V_q =   \op{Spec}(\kappa[y_0, \ldots, y_n, y_i', u]) \setminus Z(\J_q).
$$
The stacks $[V_p/S_\nu]$ and $[V_q/S_\nu]$ are the toric stacks that correspond to the fans $\Sigma_p$ and $\Sigma_q$ which are the collections of cones obtained by coning over the set of simplices in $\mathcal{C}$ and  $\mathcal{C}'$.  These correspond to the canonical bundles over the fake weighted projective stacks $\P(r_0, \ldots, r_n) / \overline{G_A^T}$ and $\P(r_0', \ldots, r_n') / \overline{(G_{A'})^T})$ respectively as described at the end of Subsection~\ref{toricreinterpretation}.

Now introduce the superpotential
$$
w := \sum_{i=0}^n b_i y^{\overline u_{\rho_i}} + c u  y^{(0,1)} = \sum_{i=0}^n b_i y^{\overline u_{\rho_i}} + c uy_k' y_0 \cdots y_n.
$$
and define
\[
 Z_p := Z(w) \subseteq X_p = \P(r_0, \ldots, r_n) / \overline{G_A^T}
 \]
  and
  \[
  Z_q := Z(w) \subseteq X_q = \P(r_0', \ldots, r_n') / \overline{G_{A'}^T}.
  \]
 When we take these zero loci, the polynomial $w$ specializes to only having the variables that correspond to the elements in $\Xi$ and $\Xi'$, respectively. By Equations~\eqref{mirmoninterp} and~\eqref{transposedpoltoric}, it follows that $w$ specializes to $F_{A^T}$ and $F_{(A')^T}$ respectively.  
 
 In summary, we have defined the two CY orbifolds
$$
Z_{A^T,G^T_A} = Z_p; \qquad Z_{(A')^T, G^T_{A'}}=Z_q.
$$
The derived equivalence desired now follows if we can use Corollary~\ref{cor: derivedequiv}.  In Lemma~\ref{IpInRadicalIdealLemma} below, we prove that the hypotheses of Corollary~\ref{cor: derivedequiv} hold, finishing the proof.
    \end{proof}

 \begin{lemma}\label{IpInRadicalIdealLemma}
Take the superpotential associated to the sum of the monomials corresponding to the lattice points $u_{\rho_i}$ that are the minimal generators of the rays in the fan $\Sigma$:
$$
w := \sum_{i=0}^n b_i y^{(u_{\rho_i}, 1)} + c u \prod y^{(0,1)}.
$$
If  $b_i \neq 0$ for all $i \in I$, then we have the following containment of ideals
 $$
 \I_p \subseteq \sqrt{\partial w, \J_p} \tand \I_q \subseteq \sqrt{\partial w, \J_q}.
 $$
 \end{lemma}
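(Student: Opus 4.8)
The plan is to establish the two containments by a generator-by-generator check, exploiting the combinatorial description of $\I_p$ and $\J_p$ in terms of the triangulation $\mathcal T$ (and symmetrically $\I_q, \J_q$ via $\mathcal T'$). By Equations~\eqref{eq: I} and~\eqref{eq: J}, a monomial generator of $\I_p$ not already in $\J_p$ must involve the variable $u$ (the variable dual to the ray $(0,1)$): the generators of $\J_p$ are exactly those coming from the maximal simplices of $\mathcal C$, all of which contain $(0,1)$, so dropping to a proper face that still omits $u$ lands in $\J_p$. Hence it suffices to show that every product $\prod_{i \notin I_0} y_i \cdot \prod_{j\notin J_0} u_j$ (in our situation a monomial in the $y$'s times $u$) corresponding to a simplex of $\mathcal T$ lies in $\sqrt{\partial w, \J_p}$. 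Since $\J_p \subseteq \sqrt{\partial w, \J_p}$ automatically, the real content is the generators coming from $\mathcal S$, i.e.\ the simplices $\operatorname{Conv}(\{\xi\}_{\xi\in I_0}, m_k')$ that avoid the interior of $\operatorname{Conv}(\Xi)$; these are precisely the monomials that "see'' the new cleave direction.

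The key computational step is to differentiate $w = \sum_i b_i y^{(u_{\rho_i},1)} + c\, u\prod y_i$ with respect to the $y$-variables and read off which monomials the partials $\partial w/\partial y_i$ contribute to the Jacobian ideal. Because $F_A$ is a Kreuzer-Skarke polynomial, each $\partial w/\partial y_i$ is, up to the term $c u \prod_{j\neq i} y_j$, a sum of the monomials $b_j y^{(u_{\rho_j},1)}/y_i$ over those $j$ whose monomial actually involves $y_i$ — and the chain/loop structure means this is a very small, explicitly controlled set. The hypothesis $b_i \neq 0$ for $i\in I$ guarantees that along the new chain (or Fermat point) created by removing the arrow, each relevant monomial genuinely appears with a nonzero coefficient in some partial derivative, so that $\partial w$ contains the monomials needed to dominate the extra generators of $\I_p$ modulo $\J_p$. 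Concretely, for a generator $g$ of $\I_p$ coming from a simplex in $\mathcal S$, I would locate a vertex $y_i$ with $i\in I$ such that $g$ is divisible by a monomial appearing in $\partial w/\partial y_i$; then $g \in \sqrt{\partial w}$ after multiplying by the complementary factor, using that the remaining factor is itself in $\J_p$ or a power of a variable.

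The main obstacle I anticipate is the bookkeeping: translating the face/interior conditions defining $\mathcal S$ (and $\mathcal C$) into precise divisibility statements among monomials, and checking uniformly over the three atomic types (Fermat, loop, chain) that removing an arrow produces exactly the divisibility needed, with the coefficient hypothesis $b_i\neq 0$ for $i\in I$ being the sharp condition. I would handle this by reducing to the connected component of the Kreuzer-Skarke diagram affected by the cleave — the new chain/Fermat piece indexed by $I$ — since variables outside that component contribute generators already in $\J_p$ and play no role. Within that component, the argument for a chain is an induction along the chain from the tail (the head of the deleted arrow) outward: the Fermat-type tail monomial $y_{\text{tail}}^{a_{\text{tail}}}$ has derivative proportional to $y_{\text{tail}}^{a_{\text{tail}}-1}$, and each successive link feeds the next, so one peels off variables one at a time. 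The loop case reduces to a chain case once the arrow is deleted (that deletion is exactly what turns the loop into a chain), and the Fermat case is immediate. The symmetric statement for $\I_q, \J_q$ with $\mathcal T'$ follows by the same argument with the roles of $\Xi$ and $\Xi'$, $m_k$ and $m_k'$ interchanged, as in the proof of Lemma~\ref{lem: KS triangulations}.
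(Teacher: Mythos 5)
Your overall strategy coincides with the paper's: identify the generators of $\I_p$ not already in $\J_p$ as the ones coming from $\mathcal S$ (hence divisible by $u$), and then produce the required radical memberships by differentiating $w$ and inducting along the new chain, with $b_i \neq 0$ for $i \in I$ powering each step. However, the step you defer as ``bookkeeping'' is actually the crux, and the mechanism you sketch for it would not work as stated. The paper does not handle the $\mathcal S$-generators one by one via divisibility against monomials of the partials; it first proves the sharper combinatorial containment $\I_p \subseteq \langle \J_p,\, u\,(y_j : j \in I)\rangle$ (Equations~\eqref{eq: Ip}, \eqref{eq: Iq}, \eqref{eq: Ip2}, \eqref{eq: Iq2}), and only then shows $u y_j \in \sqrt{\partial w, \J_p}$ for $j \in I$. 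The containment rests on a geometric fact you would need to supply: the points $m_k', m_k, m_{k+1}, \ldots, m_\ell$ all lie on a common proper face of $(\Delta,1)$, so no simplex of $\mathcal T$ can contain all of them; a maximal simplex of $\mathcal S$ omits $(0,1)$ and exactly one further point, which cannot be $m_k'$ or $m_k$ (that would reproduce $\op{Conv}(\Xi)$ or $\op{Conv}(\Xi')$), hence must be some $m_j$ with $j \in I$, making its monomial divisible by $u y_j$. By contrast, your proposed test --- ``locate a vertex $y_i$ with $i \in I$ such that $g$ is divisible by a monomial appearing in $\partial w / \partial y_i$'' --- fails literally: the generators of $\I_p$ are squarefree products of distinct variables, while the monomials of the partials carry exponents $a_{ii}-1$ and extra factors, so no such divisibility holds in general. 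The radical membership genuinely has to be routed through the intermediate monomials $u y_j$.

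On the inductive step itself, your description is directionally right but terminologically muddled: the Fermat vertex created by the cleave sits at the \emph{source} $x_k$ of the deleted arrow, while the new chain indexed by $I$ has its tail at the \emph{head} $x_{k+1}$ and its Fermat end at $x_\ell$. The direction matters: the paper starts at $\partial_k w$, where the two unwanted summands already lie in $\J_p$ so that $b_{k+1} y_k y_{k+1}^{a_{(k+1)(k+1)}} u$ is isolated, and then marches toward $y_\ell$, at each stage isolating the single monomial $u y_{j+1}$ because the $y_{j-1}$-term is in the radical by the previous step. Starting instead at the Fermat end $y_\ell$ and working backward only ever yields the products $y_{j-1} y_j u$ and never isolates the individual $u y_j$ that the containment of $\I_p$ requires. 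With the common-face argument supplied and the induction run in the correct direction (separately, but identically, for the chain and loop cases), your plan becomes the paper's proof.
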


 \begin{proof}
 We use the notation in the previous proof.
 Take $F_A$ to be the sum of $\beta$ invertible polynomials of atomic types $F_{A_1}, \ldots, F_{A_\beta}$.  Without loss of generality, we say that $m_k$ is in $F_{A_1}$. Due to the assumption that $F_{A'}$ corresponds to having a Fermat term for the variable $x_k$, we know that $F_{A_1}$ must be either a chain or a loop.  We split our proof into these two cases as they give triangulations of a slightly different nature.

 {\bf Case 1: $F_{A_1}$ is a chain of length $\ell+1$.}

Since by assumption, $x_k^{a_{kk}}x_{k+1}$ is a summand of the atomic part $F_{A_1}$, we know that $k<\ell$.  We now look at the polytope $(\Delta, 1) \subseteq M_{\R} \times \R$. We have two triangulations $\T$ and $\T'$ as above. These triangulations correspond to irrelevant ideals $\I_p$ and $\I_q$ for some maximal chambers of the secondary fan corresponding to some characters $\chi_p$ and $\chi_q$.  The subideals of $\I_p$ and $\I_q$ generated by taking the monomials associated to the maximal simplices in the subcollections $\mathcal{C} \subseteq \T$ and $\mathcal{C'} \subseteq \T'$ yield the subideals $\J_p$ and $\J_q$ as in Equation~\eqref{eq: J}, namely,
 $$
 \J_p = \langle y_k' (y_0, \ldots, y_n)\rangle
$$
 and
 $$
 \J_q = \langle y_k(y_0, \ldots, y_{k-1}, y_k', y_{k+1}, \ldots, y_n)\rangle.
 $$
The quotients $\I_p/\J_p$ and $\I_q/\J_q$ are generated by the monomials associated to the simplices in the collections $\mathcal{S}$ and $\mathcal{S}'$ that are of maximal dimension.  While we need to prove that $\I_p \subseteq \sqrt{\partial w, \J_p}$ and $\I_q \subseteq \sqrt{\partial w, \J_q}$, we will instead prove something slightly stronger.  Namely
\begin{equation}
\I_p \subseteq  \left\langle y_k'(y_0, \ldots, y_n), u(y_{k+1}, \ldots, y_{\ell})\right\rangle  \subseteq \sqrt{\partial w, \J_p}
\label{eq: Ip}
\end{equation}
and
\begin{equation}
  \I_q \subseteq \left\langle y_k(y_0, \ldots, y_{k-1}, y_k', y_{k+1}, \ldots, y_n), u(y_{k+1}, \ldots, y_{\ell}) \right\rangle  \subseteq \sqrt{\partial w, \J_q}.
  \label{eq: Iq}
\end{equation}

We first establish the containments
\[
\I_p \subseteq  \left\langle y_k'(y_0, \ldots, y_n), u(y_{k+1}, \ldots, y_{\ell})\right\rangle\]
and
 \[
  \I_q \subseteq \left\langle y_k(y_0, \ldots, y_{k-1}, y_k', y_{k+1}, \ldots, y_n), u(y_{k+1}, \ldots, y_{\ell} )\right\rangle,
  \]
  from Equations~\eqref{eq: Ip} and~\eqref{eq: Iq}.
This is equivalent to showing that the simplices in $\mathcal S_n, \mathcal S'_n$ do not contain $(0,1)$ and some  $v \in \{ \overline m_{k+1}, ..., \overline m_{\ell} \}$.
The fact that each simplex in $\mathcal S_n, \mathcal S'_n$ does not contain $(0,1)$ and some other element $v \in \nu \setminus \{(0,1)\}$ is clear, so we now focus on proving that the $v$ dropped is in the set $\{\overline m_{k+1}, ..., \overline m_{\ell} \}$.

   The key observation is that the variables $\overline m_k', \overline m_k, \ldots, \overline m_{\ell}$ all live on the same $\ell - k - 1$ dimensional face of the polytope $(\Delta, 1)$. In particular, this is the face defined by taking the intersection of $(\Delta, 1)$ with the half spaces corresponding to the elements $(u_{\rho_i}, 1)$ for $k \leq i \leq \ell$, i.e.,
 $$
\overline m_k', \overline m_k, \ldots, \overline m_{\ell} \in (\Delta,1) \cap \bigcap_{ i \notin \{k , \ldots n\}} H_{(u_{\rho_i}, 1)}.
 $$
 This implies that one cannot obtain a simplex in $\mathcal S$ or $\mathcal S'$ whose set of vertices contains the set  $\{\overline m_k', \overline m_k, \ldots, \overline m_{\ell} \}$.  If $\overline m_k'$ is not a vertex, then we have $\op{Conv}(\Xi)$ and if $\overline m_k$ is not a vertex, then we have $\op{Conv}(\Xi')$.  Neither of these is in $\mathcal T, \mathcal T'$.
  This implies the desired containment.

We now establish the containments
\[
 \left\langle y_k'(y_0, \ldots, y_n), u(y_{k+1}, \ldots, y_{\ell})\right\rangle  \subseteq  \sqrt{ \partial w, \J_p}
 \]
  and
  \[
  \left\langle y_k(y_0, \ldots, y_{k-1}, y_k', y_{k+1}, \ldots, y_n), u(y_{k+1}, \ldots, y_{\ell}) \right\rangle \subseteq \sqrt{ \partial w, \J_q},
  \]
  from Equations~\eqref{eq: Ip} and~\eqref{eq: Iq}.

    It suffices to prove that the monomial $uy_j$ is in both ideals $ \sqrt{ \partial w, \J_q}$ and $ \sqrt{ \partial w, \J_p}$ for $k<j\leq \ell$.

First, one can describe all of the monomials of $w$ explicitly in terms of the matrix $A$
$$
y^{(u_{\rho_i}, 1)} = \begin{cases} y_0^{a_{00}}u & \text{ if $k\neq 0$ and $i=0$}. \\
						y_0^{a_{00}}(y'_0)^{b}u & \text{ if $k = i = 0$} \\
						y_i^{a_{ii}} y_{i-1} u & \text{if $0<i\leq \ell$, $i\neq k$} \\
						y_k^{a_kk}y_{k-1}(y_k')^{b}u  & \text{ if $0<k=i$} \\
						\prod_{j = \ell + 1}^n y_j^{a_{ji}}u & \text{ if $i >\ell$}.\end{cases}
$$
Note that $y_j$ does not divide the monomial $y^{(u_{\rho_i},1)}$ whenever $0\leq j \leq \ell$ and $i> \ell$.

We now take the partial derivative of $w$ with respect to the variable $y_k$ and consider,
$$
y_k \partial_k w = b_k a_{kk} y_k^{a_{kk}} y_{k-1} (y_k')^b u + b_{k+1} y_k y_{k+1}^{a_{(k+1)(k+1)}}u + c u y_k'\prod y_i.
$$
The first and third summand are in the ideals $\J_p, \J_q$.   Therefore $y_k y_{k+1} u$ is in the radical ideals $ \sqrt{ \partial w, \J_p}$ and $ \sqrt{ \partial w, \J_q}$ as $b_{k+1} \neq 0$ by assumption.

 Inductively, we now show that, provided that $y_{j-1}y_ju$ for $k<j<\ell$  is in $ \sqrt{ \partial w, \J_p}$ and $ \sqrt{ \partial w, \J_q}$, the monomial $y_{j+1}u$ is as well. We take the partial derivative with respect to $y_j$ of the superpotential $w$
$$
\partial_j w = b_j y_{j-1} y_j^{a_{jj}-1}u + b_{j+1} y_{j+1}^{a_{(j+1)(j+1)}}u + c u y_k' \prod_{i \neq j} y_i.
$$
The first and third summands are in  $ \sqrt{ \partial w, \J_p}$ and $ \sqrt{ \partial w, \J_q}$, consequently $y_{j+1}u$ is as well.

  Finally, return to the partial derivative and compute
\[
\partial_k w = b_k a_{kk} y_k^{a_{kk}-1} y_{k-1} (y_k')^b u + b_{k+1} y_{k+1}^{a_{(k+1)(k+1)}}u + c u y_k'\prod_{i \neq k} y_i.
\]
The first and third summands are in $\sqrt{ \partial w, \J_p}, \sqrt{ \partial w, \J_q}$ therefore $y_{k+1} u $ is as well.
This completes Case 1 as Equations~\eqref{eq: Ip} and~\eqref{eq: Iq} are satisfied.

 {\bf Case 2: $F_{A_1}$ is a loop of length $\ell+1$.}

 Similarly, we prove
\begin{equation}
\I_p \subseteq  \left\langle y_0'(y_0, \ldots, y_n), u(y_{1}, \ldots, y_{\ell})\right\rangle  \subseteq \sqrt{\partial w, \J_p}
\label{eq: Ip2}
\end{equation}
and
\begin{equation}
  \I_q \subseteq \left\langle y_0(y_0, \ldots, y_n), u(y_{1}, \ldots, y_{\ell}) \right\rangle  \subseteq \sqrt{\partial w, \J_q}.
\label{eq: Iq2}
\end{equation}

 As $F_{A_1}$ is a loop, without loss of generality we set $k = 0$. We apply a similar strategy to that of Case 1, but we have that $\overline m_0', \overline m_0, \ldots, \overline m_\ell$ all sit in the same face of $(\Delta,1)$, namely,
 $$
 (\Delta,1) \cap \bigcap_{j=\ell+1}^n H_{(u_{\rho_j},1)}.
 $$

The same argument gives the containments
\[
\I_p \subseteq  \left\langle y_0'(y_0, \ldots, y_n), u(y_{1}, \ldots, y_{\ell})\right\rangle
\]
and
\[
  \I_q \subseteq \left\langle y_0(y_0, \ldots, y_n), u(y_{1}, \ldots, y_{\ell}) \right\rangle
  \]
from   Equations~\eqref{eq: Ip2} and~\eqref{eq: Iq2}.

Again, one can explicitly describe the monomial terms of $w$ in terms of the matrix $A$,
$$
y^{(u_{\rho_i}, 1)} = \begin{cases} y_0^{a_{00}}(y_0')^{a_0'}y_{\ell}u & \text{if $i=0$}. \\
						y_{i-1}y_i^{a_{ii}}u & \text{ if $0< i \leq \ell$} \\
						\prod_{j = \ell + 1}^n y_j^{a_{ji}}u & \text{ if $j >\ell$}.\end{cases}
$$
We now prove the containments
\[
  \left\langle y_0'(y_0, \ldots, y_n), u(y_{1}, \ldots, y_{\ell})\right\rangle  \subseteq \sqrt{\partial w, \J_p}
\]
and
\[
\left\langle y_0(y_0, \ldots, y_n), u(y_{1}, \ldots, y_{\ell}) \right\rangle  \subseteq \sqrt{\partial w, \J_q},
\]
from   Equations~\eqref{eq: Ip2} and~\eqref{eq: Iq2}.

 First, take the partial derivative of $w$ with respect to $y_0$,
$$
y_0 \partial_0 w = b_0 a_{00} y_0^{a_{00}} (y_0')^b y_{\ell} u + b_1 y_0 y_1^{a_{11}}u + c u y_0'\prod y_i.
$$
As the first and third summands are in both $\J_p$ and $\J_q$, we know that $y_0y_1u$ is in both the radical ideals $\sqrt{ \partial w, \J_p}$ and $\sqrt{ \partial w, \J_q}$. We now can iterate the procedure.

 Given that the monomial $y_{j-1}y_j u$ is in both the ideals $\sqrt{ \partial w, \J_p}$ and $\sqrt{ \partial w, \J_q}$, we can prove that $y_{j+1} u$ is as well for $ 0< j < \ell$. Take the partial derivative with respect to $y_j$,
$$
\partial_j w = b_{j}a_{jj} y_{j-1}y_j^{a_{jj}} u + b_{j+1} a_{(j+1)(j+1)} y_{j+1}^{a_{(j+1)(j+1)}}u + c u y_0'\prod_{i \neq j} y_i
$$
as the first and third summands are in both ideals $\sqrt{ \partial w, \J_p}$ and $\sqrt{ \partial w, \J_q}$, we have that the second summand is as well, hence $y_{j+1}u$ is in both the radical ideals $\sqrt{ \partial w, \J_p}$ and $\sqrt{ \partial w, \J_q}$.

  Finally, return to the partial derivative at $y_0$,
$$
\partial_0 w = b_0 a_{00} y_0^{a_{00}-1} (y_0')^b y_{\ell} u + b_1 y_1^{a_{11}}u + c uy_0' \prod_{i \neq 0} y_i.
$$
The first and third summands are in $\sqrt{ \partial w, \J_p}, \sqrt{ \partial w, \J_q}$ therefore $y_1 u $ is as well.
This completes Case 2 as Equations~\eqref{eq: Ip2} and~\eqref{eq: Iq2} are satisfied.
\end{proof}

  \begin{corollary}\label{BHKDEQUIVCOR}
Suppose $(A,A')$ is a Kreuzer-Skarke cleave where $F_A, F_{A'}$ define hypersurfaces define anticanonical hypersurfaces in the fake weighted projective stack $\P(q_0, \ldots, q_n) / \overline{G}$. Then the BHK mirrors $Z_{A^T, G^T_A}$ and $Z_{(A')^T, G^T_{A'}}$ are derived equivalent, i.e.,
 $$
 \dbcoh{Z_{A^T,G^T_A}} \cong \dbcoh{Z_{(A')^T, G^T_{A'}}}.
 $$
 \end{corollary}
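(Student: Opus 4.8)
The plan is to obtain Corollary~\ref{BHKDEQUIVCOR} as the special case $(c,\mathbf{b}) = (0,(1,\ldots,1))$ of Theorem~\ref{BHKDEQUIV}; no new argument is required. First I would record that, with every coefficient set equal to $1$, one has $F_A^{\mathbf{b}} = F_A$ and $F_{A'}^{\mathbf{b}} = F_{A'}$, so the generalized BHK pencils on the $A$-side reduce to the Calabi--Yau orbifolds $Z_{A,G}$ and $Z_{A',G}$, while the potential $w = \sum_{i=0}^n b_i y^{(u_{\rho_i},1)} + cu\prod y_i$ appearing in the proof of Theorem~\ref{BHKDEQUIV} reduces to $\sum_{i=0}^n y^{(u_{\rho_i},1)}$, which by Equations~\eqref{mirmoninterp} and~\eqref{transposedpoltoric} restricts on $X_p$ (resp.\ $X_q$) to $F_{A^T}$ (resp.\ $F_{(A')^T}$). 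Hence the zero loci $Z_p$ and $Z_q$ are exactly the honest BHK mirrors $Z_{A^T,G^T_A}$ and $Z_{(A')^T,G^T_{A'}}$, and not some deformation of them. (Since $F_{A^T}$ and $F_{(A')^T}$ are invertible, one could equally well take any $\mathbf{b}$ with all entries nonzero and absorb it into a rescaling of the coordinates, but $\mathbf{b} = (1,\ldots,1)$ is cleanest.)

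Next I would check the hypothesis of Theorem~\ref{BHKDEQUIV}: it requires only that $b_i \neq 0$ for each $i$ in the index set $I$ recording the chain (or Fermat) atom created by the cleave, and this is automatic since every $b_i = 1$. Therefore Theorem~\ref{BHKDEQUIV} applies and yields
\[
\dbcoh{Z_{A^T,G^T_A}} \cong \dbcoh{Z_{(A')^T,G^T_{A'}}},
\]
as asserted. I would also note that no Gorenstein (equivalently Fano) assumption on $\mathbb{P}(q_0,\ldots,q_n)/\bar G$ is used at this stage; the only inputs are that $(A,A')$ is a Kreuzer--Skarke cleave and that $F_A,F_{A'}$ are anticanonical, which is exactly the context in which Lemma~\ref{lem: KS triangulations} produces the regular triangulations $\mathcal T,\mathcal T'$ and Lemma~\ref{IpInRadicalIdealLemma} establishes the containments $\I_p \subseteq \sqrt{\partial w,\J_p}$ and $\I_q \subseteq \sqrt{\partial w,\J_q}$. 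The Gorenstein hypothesis only re-enters in Proposition~\ref{KSCleaveProp} and Theorem~\ref{thm: BHK main result}, where one concatenates a sequence of such cleaves to connect two arbitrary $\bar G$-invariant weight-$d$ polynomials through intermediate Kreuzer--Skarke polynomials.

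Accordingly I do not expect any genuine obstacle here: all the substance has already been carried out in Theorem~\ref{BHKDEQUIV}, whose proof threads Corollary~\ref{cor: derivedequiv} (applied to the partial compactifications of the two canonical-bundle toric stacks $X_p$, $X_q$) through the chain Proposition~\ref{prop: RG Flow} $\Rightarrow$ Corollary~\ref{cor: compact RG-flow} $\Rightarrow$ Theorem~\ref{thm: HW}, the combinatorial heart being the ideal containments verified, case by case for chains and loops, in Lemma~\ref{IpInRadicalIdealLemma}. If anything, the single point deserving a line of care is the identification in the first paragraph above --- that the $c = 0$, $\mathbf{b} = (1,\ldots,1)$ member of the generalized mirror pencil really is $Z(F_{A^T})$ --- which follows at once from the explicit monomial description of $w$ together with Equations~\eqref{mirmoninterp} and~\eqref{transposedpoltoric}.
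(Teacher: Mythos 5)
Your proposal is correct and is exactly the paper's proof: the authors obtain this corollary by setting $c=0$ and $b_i=1$ in Theorem~\ref{BHKDEQUIV}, and your verification that the hypothesis $b_i\neq 0$ for $i\in I$ is automatic and that $Z_p, Z_q$ specialize to the honest BHK mirrors is the (implicit) content of that one-line reduction.
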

 \begin{proof}
 We set $c = 0$ and $b_i =1$ in Theorem~\ref{BHKDEQUIV}.
 \end{proof}

 \begin{corollary}
 \label{BHKDEQUIVGORFAM}
 Fix $\emph{\textbf{b}} \in (\kappa^*)^{n+1}, c \in \kappa$.  Take two polynomials $F_A$ and $F_{A'}$ which define hypersurfaces in a quotient of a Gorenstein Fano weighted projective stack $\P(q_0, \ldots, q_n) / \overline{G}$. Then the generalized BHK mirror pencils $Z_{A^T, G^T_A}^{c, \mathbf{b}}$ and $Z_{(A')^T, G^T_{A'}}^{c, \mathbf{b}}$ are derived equivalent.
 \end{corollary}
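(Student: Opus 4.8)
The plan is to bootstrap from the single-cleave statement Theorem~\ref{BHKDEQUIV} by way of the combinatorial connectivity result Proposition~\ref{KSCleaveProp}: I would reduce both $F_A$ and $F_{A'}$ to one and the same generalized Fermat polynomial along chains of Kreuzer-Skarke cleaves, and then compose the derived equivalences obtained cleave-by-cleave.

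First I would observe that, since $F_A$ and $F_{A'}$ define (Calabi--Yau, hence anticanonical) hypersurfaces in $\mathbb P(q_0,\ldots,q_n)/\bar G$, the generalized polynomials $F_A^{\mathbf{b}}$ and $F_{A'}^{\mathbf{b}}$ are $\bar G$-invariant generalized Kreuzer-Skarke polynomials of weighted degree $d:=\sum_i q_i$ with weights $q_0,\ldots,q_n$. The Gorenstein hypothesis is exactly what makes Proposition~\ref{KSCleaveProp} applicable, through reflexivity of the anticanonical polytope, which forces every coordinate vertex to support a $\bar G$-invariant Fermat monomial of degree $d$. Applying Proposition~\ref{KSCleaveProp} twice, I obtain a sequence of Kreuzer-Skarke cleaves relating $F_A^{\mathbf{b}}$ to $\sum_i b_i x_i^{d/q_i}$ and another relating $F_{A'}^{\mathbf{b}}$ to the same generalized Fermat polynomial.

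Next I would reverse the second sequence and concatenate it with the first, obtaining a chain of Kreuzer-Skarke polynomials $A = A_0, A_1, \ldots, A_N = A'$ in which each consecutive pair $(A_j, A_{j+1})$ is a Kreuzer-Skarke cleave and each $F_{A_j}$ defines an anticanonical hypersurface in $\mathbb P(q_0,\ldots,q_n)/\bar G$; since both endpoints of a Kreuzer-Skarke cleave are by definition genuine Kreuzer-Skarke polynomials, every intermediate $F_{A_j}$ is one too, so Theorem~\ref{BHKDEQUIV} applies at each link. Writing $I_j$ for the index set attached to the cleave $(A_j, A_{j+1})$ and using that $\mathbf{b}\in(\kappa^*)^l$ has all coordinates nonzero (so in particular $b_i\neq 0$ for $i\in I_j$), Theorem~\ref{BHKDEQUIV} gives
\[
\dbcoh{Z_{A_j^T,\, G^T_{A_j}}^{c,\mathbf{b}}}\cong \dbcoh{Z_{A_{j+1}^T,\, G^T_{A_{j+1}}}^{c,\mathbf{b}}}
\]
for every $j$, and composing these equivalences along the chain yields $\dbcoh{Z_{A^T, G^T_A}^{c,\mathbf{b}}}\cong \dbcoh{Z_{(A')^T, G^T_{A'}}^{c,\mathbf{b}}}$, which is the assertion.

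I do not expect any essentially new difficulty, since the substantive content is already packaged into Proposition~\ref{KSCleaveProp} and Theorem~\ref{BHKDEQUIV}; what remains is bookkeeping. One must check that the ambient stack, the weights $q_i$, and the group $G$ (hence $\bar G$) stay fixed along the whole chain while only the transposed groups $G^T_{A_j}$ vary---which is precisely the situation Theorem~\ref{BHKDEQUIV} is designed to handle---and that every link is still a $\bar G$-invariant anticanonical Kreuzer-Skarke polynomial, which is guaranteed by Proposition~\ref{KSCleaveProp}. The one genuinely conceptual point, and the only place where the hypothesis must be watched, is that the union $\bigcup_j I_j$ over the full chain can be all of $\{0,\ldots,n\}$; this is exactly why the per-cleave condition ``$b_i\neq 0$ for $i\in I$'' of Theorem~\ref{BHKDEQUIV} has to be upgraded to ``$\mathbf{b}\in(\kappa^*)^l$'' for the family statement.
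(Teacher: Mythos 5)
Your proposal is correct and follows the same route as the paper: the authors likewise reduce both $F_A$ and $F_{A'}$ to the common generalized Fermat polynomial via Proposition~\ref{KSCleaveProp} and then iterate Theorem~\ref{BHKDEQUIV} along the resulting chain of Kreuzer-Skarke cleaves, with the hypothesis $\mathbf{b}\in(\kappa^*)^l$ guaranteeing the per-cleave nonvanishing condition at every link. Your write-up simply spells out the bookkeeping that the paper's one-sentence proof leaves implicit.
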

 \begin{proof}
Since we assume $b_i \neq 0$ for all $i$, this follows directly from iteratively using Theorem~\ref{BHKDEQUIV} to compare both $F_A$ and $F_{A'}$ through a sequence of Kreuzer-Skarke cleaves, which is guaranteed to exist by Proposition~\ref{KSCleaveProp}. \end{proof}

 \begin{remark}
Since $Z_{A^T,G^T_A}^{c, \mathbf{b}}, Z_{(A')^T,G^T_{A'}}^{c, \mathbf{b}}$ are open substacks of the irreducible component of the critical locus of $w$ lying on $Z(u)$, it follows that they are birational.  In the Gorenstein Fano case, this immediately recovers Theorem~\ref{thm: birationality BHK} in the case of families.
\label{rem: BHK birational}
\end{remark}

We can now rephrase Seidel and Sheridan's Homological Mirror Symmetry result for hypersurfaces in projective space  \cite{Sei03, Sheridan} in the language of Berglund-H\"ubsch-Krawitz mirror symmetry.
They define the \newterm{universal Novikov field} $\Lambda$, to be the field whose elements are formal sums
\[
\sum_{j=0}^{\infty} c_j r^{\lambda_j}
\]
where $c_j \in \C$, and $\lambda_j \in \R$ is an increasing sequence of real numbers such that
\[
\lim_{j \to \infty} \lambda_j = \infty.
\]
The universal Novikov field is algebraically closed of characteristic zero.

Over the universal Novikov field, we define a \newterm{Berglund-H\"ubsch-Krawitz pencil} as
$$
Z_{A, G}^{\op{pencil}} : = \left[ \frac{\{x_0 ... x_n + rF_{A} = 0\} }{G \gm}\right] \subseteq \left[\frac{\A^{n+1} \setminus\{0\}}{G \gm}\right] = \frac{\P(q_0, \ldots, q_n) }{\overline{G}}.
$$
Since Sheridan and Seidel have proven Homological Mirror Symmetry when $A^T$ is a Fermat polynomial, we obtain the following.
 \begin{theorem}
\label{HMS}
Homological Mirror Symmetry holds for Berglund-H\"ubsch-Krawitz mirror pencils in projective space over the universal Novikov field.

More precisely, if $F_A$ defines a smooth hypersurface in projective space $\P^n$ over the universal Novikov field (in particular $G=\Z_{n+1}$)  and $n \geq 3$, there is an equivalence of triangulated categories,
\[
\op{Fuk}{Z_{A, G}} \cong \dbcoh{Z_{A^T, G^T_A}^{\op{pencil}}}.
\]
\end{theorem}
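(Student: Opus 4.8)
The plan is to deduce the theorem by concatenating three equivalences: symplectomorphism invariance of the Fukaya category on the symplectic side, the Homological Mirror Symmetry theorems of Seidel and Sheridan in the Fermat case, and the derived equivalence of Berglund--H\"ubsch--Krawitz mirror pencils furnished by Corollary~\ref{BHKDEQUIVGORFAM}.

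\emph{Step 1 (reduce the A-side to Fermat).} Let $A_0 := (n+1)I_{n+1}$, so that $F_{A_0} = x_0^{n+1} + \dots + x_n^{n+1}$ and $Z_{A_0, G}$ is the Fermat hypersurface in $\mathbb P^n$; note that $\bar G$ is trivial in this situation, so no orbifold quotient occurs and $Z_{A,G}$, $Z_{A_0,G}$ are honest smooth hypersurfaces. Both have degree $n+1$ in $\mathbb P^n$, so they are deformation equivalent through smooth hypersurfaces; the restricted Fubini--Study class is constant along such a deformation, so Moser's theorem yields a symplectomorphism $Z_{A,G} \cong Z_{A_0,G}$. Since the Fukaya category over $\Lambda$ is a symplectic invariant, we get $\op{Fuk}{Z_{A,G}} \cong \op{Fuk}{Z_{A_0,G}}$.

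\emph{Step 2 (apply known HMS).} For $n = 3$ this is Seidel's theorem for the $K3$ surface, and for $n \ge 4$ it is Sheridan's theorem, giving $\op{Fuk}{Z_{A_0,G}} \cong \dbcoh{Z_{A_0^T, G_0^T}^{\op{pencil}}}$. The content here is that, over the universal Novikov field $\Lambda$, the B-model mirror in those theorems is exactly the Greene--Plesser/Batyrev pencil $[\{y_0\cdots y_n + r\sum_i y_i^{n+1} = 0\}/\overline{G_0^T}\gm]$ with $\overline{G_0^T} \cong (\Z_{n+1})^{n-1}$, which is precisely $Z_{A_0^T, G_0^T}^{\op{pencil}}$ because $A_0^T = A_0$.

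\emph{Step 3 (cross the B-side by VGIT).} The polynomials $F_A$ and $F_{A_0}$ both define anticanonical hypersurfaces in $\mathbb P^n = \mathbb P(1,\dots,1)/\bar G$, whose coarse space is trivially Gorenstein, and $J_{F_A} = J_{F_{A_0}} = G \subseteq SL(F_A) \cap SL(F_{A_0})$; thus Corollary~\ref{BHKDEQUIVGORFAM} applies with $c = 1$ and $\mathbf b = (r,\dots,r) \in (\Lambda^\ast)^l$. Since $Z_{A^T, G^T_A}^{\op{pencil}} = Z_{A^T, G^T_A}^{1,(r,\dots,r)}$ and similarly for $A_0$, this gives $\dbcoh{Z_{A_0^T, G_0^T}^{\op{pencil}}} \cong \dbcoh{Z_{A^T, G^T_A}^{\op{pencil}}}$. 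Composing Steps 1--3 completes the proof.

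The delicate point -- and the reason the theorem rests on ``bootstrapping'' rather than a new symplectic computation -- is the identification inside Step 2: one must use the precise form of Seidel's and Sheridan's theorems to see that their B-side is literally $Z_{A_0^T, G_0^T}^{\op{pencil}}$ over $\Lambda$, i.e.\ that the complex-structure parameter prescribed by the mirror map agrees with the formal Novikov variable $r$ in our definition of the pencil (any residual unit ambiguity being harmless, since it is absorbed by the freedom $b_i \in \Lambda^\ast$ in Corollary~\ref{BHKDEQUIVGORFAM}). By contrast Steps 1 and 3 are essentially formal: Step 1 is standard symplectic topology, and Step 3 only requires checking that $c = 1 \neq 0$, $b_i = r \neq 0$ reproduce the ``$\prod x_i + rF$'' pencil and that the Gorenstein and $SL$-containment hypotheses hold, all of which are immediate here.
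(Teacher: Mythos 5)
Your proposal is correct and follows exactly the paper's own three-step argument: Moser's theorem to identify the Fukaya categories of the two degree-$(n+1)$ hypersurfaces, Seidel ($n=3$) and Sheridan ($n\geq 4$) for HMS in the Fermat case, and Corollary~\ref{BHKDEQUIVGORFAM} to cross from the Fermat mirror pencil to $Z_{A^T,G^T_A}^{\op{pencil}}$. The only (immaterial) divergence is the choice of parameters in the last step: you take $c=1$, $b_i=r$, which literally reproduces the defining equation $x_0\cdots x_n + rF_{A^T}$ of the pencil, whereas the paper takes $b_i=1$, $c=r$; both are valid specializations of the corollary.
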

\begin{proof}
Set $A' = (n+1)\op{Id}$, $G = J_{A'} = \Z_{n+1}$ and $q_0= ... = q_n = 1$.  We have
\begin{align*}
\op{Fuk}{Z_{A, G}} & = \op{Fuk}{Z_{A', G}} \\
& = \dbcoh{Z_{(A')^T, G^T_{A'}}^{\op{pencil}}}\\
& = \dbcoh{Z_{A^T, G^T_A}^{\op{pencil}}}.\\
\end{align*}
The first line follows from the fact that $Z_{A, G}$ is symplectomorphic to $Z_{A', G}$ by Moser's theorem.  The second line is Theorem 1.3 of \cite{Sei03} in the case $n=3$ and Theorem 1.2.7 of \cite{Sheridan} in the case $n \geq 4$.  The third line is Corollary~\ref{BHKDEQUIVGORFAM} in the special case $b_i =1$, $c = r$, and $\kappa = \Lambda$.
\end{proof}

\begin{remark}
In the case of elliptic curves ($n=2$), a variant of this theorem can be proven using work of Polishchuk and Zaslow \cite{PZ}.
\end{remark}

\begin{remark}
The category $\op{Fuk}{Z_{A, G}}$ is equipped with a $\Lambda$-linear structure and the equivalence is $\Lambda$-linear after changing the module structure of $\dbcoh{Z_{A^T, G^T_A}^{\op{pencil}}}$ by an automorphism of $\Lambda$.  See \cite{Sei03, Sheridan} for details.  It can then be extended to an equivalence of dg-categories using Theorem 9.8 of \cite{LO}.
\end{remark}

\subsection{An Example}

In the following example, we will see that our proof extends to families as well.

 \begin{example} Consider the polynomials $F_{A} = x_0^3 + x_1^2x_2 + x_2^3$ and $F_{A'} = x_0^3 + x_1^3 +x_2^3$. Both carve out cubic curves in $\P^2$. Let us take the fan of $\P^2$ which is the complete fan in $N_{\R} = (\Z)^2 \otimes \R$ generated by rays $(1,0)$, $(0,1)$ and $(-1,-1)$ and enumerate these rays as $x_{(1,0)} =: x_0$, $x_{(0,1)} =:x_1$ and $x_{(-1,-1)} =:x_2$ respectively. The canonical bundle of $\P^2$ is the toric variety associated to the fan $\Sigma_K$ which is defined to be the fan with rays generated by $u_{\rho_0} = (1,0,1)$, $u_{\rho_1} = (0,1,1)$ $u_{\rho_2} = (-1,-1,1)$ and $u_{\rho_3} = (0,0,1)$ and is the star subdivision along $\rho_3$ of the fan generated by $\rho_0, \rho_1$, and $\rho_2$.

 The dual cone to $|\Sigma_K|$ is generated by the elements $(2,-1,1)$, $(-1,2,1)$, and $(-1,-1,1)$. The polytope $\Delta$ that is associated to $\P^2$ is found by looking at the one slice $|\Sigma_K|_{(1)} = (\Delta, 1)$.  Note that since each lattice point corresponds to a monomial we can look at which lattice points correspond to monomials that are nonzero in $F_A$ and $F_{A'}$.

 \begin{figure}[h]
 \[
\begin{picture}(125,125)
\put(10,10){\circle*{5}}
\put(10,43){\circle{5}}
\put(10,76){\circle*{5}}
\put(10,109){\circle*{5}}
\put(43,10){\circle{5}}
\put(43,43){\circle*{5}}
\put(43,76){\circle{5}}
\put(76,10){\circle{5}}
\put(76,43){\circle{5}}
\put(109,10){\circle*{5}}
\put(10,10){\line(1,0){99}}
\put(10,10){\line(0,1){99}}
\put(10,109){\line(1,-1){99}}
\put(13,15){\small $x_2^3$}
\put(12,66){\small $x_1^2x_2$}
\put(20,104){\small $x_1^3$}
\put(99,20){\small $x_0^3$}
\end{picture}
\]
\caption{The polytope $\Delta$ with lattice points marked by sections of $\omega_{\P^2}$. }\label{fig: triangle}
\end{figure}
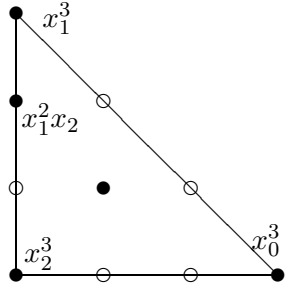

To consider the BHK mirrors, we set
\[
\nu := \{v_{\tau_0}, v_{\tau_1}, v_{\tau_1'}, v_{\tau_2}, v_{\tau_3}\}
\]
where $v_{\tau_0} = (2,-1,1)$, $v_{\tau_1} = (-1,2,1)$, $v_{\tau_1'}= (-1,1,1)$, $v_{\tau_2} = (-1,-1,1)$ and $v_{\tau_3} = (0,0,1)$.
We introduce variables for each ray: $y_i$ for $\tau_i$ where $i \in \{0,1,2\}$, $y_1'$ for $\tau_1'$ and $u$ for $\tau_3$. The triangulations $\T, \T'$ are pictured in Figure~\ref{fig: triangulations}.

\begin{figure}[h]
\[
\begin{picture}(375,125)
\put(10,10){\circle*{5}}
\put(10,43){\circle{5}}
\put(10,76){\circle*{5}}
\put(10,109){\circle*{5}}
\put(43,10){\circle{5}}
\put(43,43){\circle*{5}}
\put(43,76){\circle{5}}
\put(76,10){\circle{5}}
\put(76,43){\circle{5}}
\put(109,10){\circle*{5}}
\put(10,10){\line(1,0){99}}
\put(10,10){\line(0,1){99}}
\put(10,109){\line(1,-1){99}}
\put(10,76){\line(1,-1){33}}
\put(10,76){\line(3,-2){99}}
\put(43,43){\line(2,-1){66}}
\put(10,10){\line(1,1){33}}
\put(0,15){\small $y_2$}
\put(0,65){\small $y_1'$}
\put(0,100){\small $y_1$}
\put(109,20){\small $y_0$}
\put(43,33){\small $u$}
\put(62,100){$\T$}
\put(312,100){$\T'$}

\put(260,10){\circle*{5}}
\put(260,43){\circle{5}}
\put(260,76){\circle*{5}}
\put(260,109){\circle*{5}}
\put(293,10){\circle{5}}
\put(293,43){\circle*{5}}
\put(293,76){\circle{5}}
\put(326,10){\circle{5}}
\put(326,43){\circle{5}}
\put(359,10){\circle*{5}}
\put(260,10){\line(1,0){99}}
\put(260,10){\line(0,1){99}}
\put(260,109){\line(1,-1){99}}
\put(293,43){\line(2,-1){66}}
\put(260,109){\line(1,-2){33}}
\put(260,10){\line(1,1){33}}
\put(250,15){\small $y_2$}
\put(250,65){\small $y_1'$}
\put(250,100){\small $y_1$}
\put(359,20){\small $y_0$}
\put(293,33){\small $u$}
\end{picture}
\]
\caption{The triangulations $\mathcal T, \mathcal T'$ of $\nu$. }\label{fig: triangulations}
\end{figure}
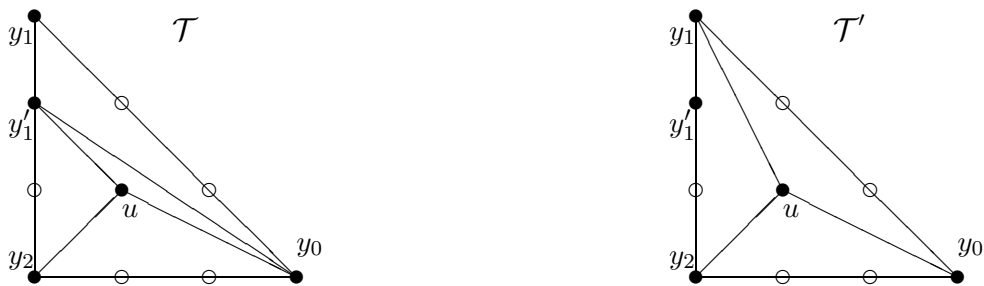
The corresponding irrelevant ideals are
 $$
 \I_p = \langle y_1(y_0,y_1',y_2), uy_2\rangle = \langle y_1y_0, y_1y_1', y_1y_2, uy_2\rangle \text{ and } \I_q = \langle y_1'(y_0,y_1,y_2)\rangle=\langle y_1'y_0, y_1'y_1, y_1'y_2\rangle,
 $$
 respectively.

 There exists subideals $\J_p =  \langle y_1(y_0,y_1',y_2)\rangle = \langle y_1y_0, y_1y_1', y_1y_2 \rangle$ and $\J_q = \I_q$ which correspond to the fans over the triangulations in Figure~\ref{fig: subtriangulations}.
 The toric varieties associated to $\Xi$ and $\Xi' $ are  $\op{tot}(\omega_{\P(2,3,1)})$ and $\op{tot}(\omega_{\P^2 / \Z_3})$ respectively.

 \begin{figure}[h]
 \[
\begin{picture}(375,125)
\put(10,10){\circle*{5}}
\put(10,43){\circle{5}}
\put(10,76){\circle*{5}}
\put(43,10){\circle{5}}
\put(43,43){\circle*{5}}
\put(76,10){\circle{5}}
\put(109,10){\circle*{5}}
\put(10,10){\line(1,0){99}}
\put(10,10){\line(0,1){66}}
\put(10,76){\line(1,-1){33}}
\put(10,76){\line(3,-2){99}}
\put(43,43){\line(2,-1){66}}
\put(10,10){\line(1,1){33}}
\put(0,15){\small $y_2$}
\put(0,65){\small $y_1'$}
\put(109,20){\small $y_0$}
\put(43,33){\small $u$}
\put(62,100){$\Xi$}
\put(312,100){$\Xi'$}
\put(260,10){\circle*{5}}
\put(260,43){\circle{5}}
\put(260,76){\circle{5}}
\put(260,109){\circle*{5}}
\put(293,10){\circle{5}}
\put(293,43){\circle*{5}}
\put(293,76){\circle{5}}
\put(326,10){\circle{5}}
\put(326,43){\circle{5}}
\put(359,10){\circle*{5}}
\put(260,10){\line(1,0){99}}
\put(260,10){\line(0,1){99}}
\put(260,109){\line(1,-1){99}}
\put(293,43){\line(2,-1){66}}
\put(260,109){\line(1,-2){33}}
\put(260,10){\line(1,1){33}}
\put(250,15){\small $y_2$}
\put(250,100){\small $y_1$}
\put(359,20){\small $y_0$}
\put(293,33){\small $u$}
\end{picture}
\]
\caption{The triangulations $\Xi, \Xi'$. }\label{fig: subtriangulations}
\end{figure}
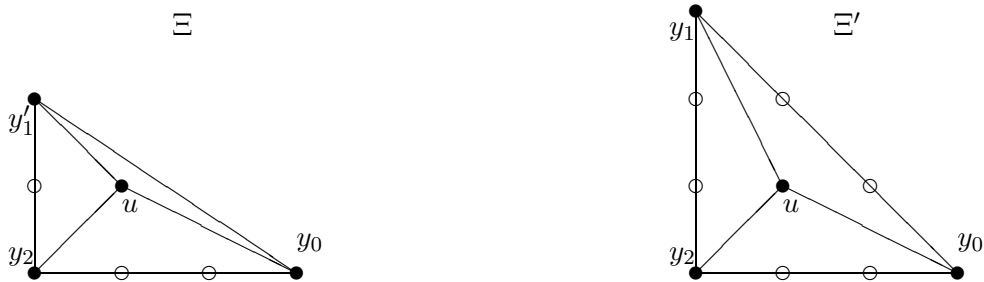

 We now need to discuss the superpotential $w$ that is a function on the partial compactifications of these bundles. To do this, we must turn back to the dual cone to $\text{Cone}(\nu)$. In this case, the dual cone is just $|\Sigma_K|$ (on a general Gorenstein Fano quotient of weighted projective space, the dual cone contains $|\Sigma_K|$ with equality if and only if $F_A$ or  $F_{A'}$ is a Fermat polynomial).  We draw the support of the dual cone $|\Sigma_K|_{(1)}$ below along with the functions corresponding to the lattice points in Figure~\ref{fig: labelled monomials}.
\begin{figure}[h]
 \[
 \begin{picture}(125,125)
 \put(10,10){\circle*{5}}
 \put(60,60){\circle*{5}}
 \put(60,110){\circle*{5}}
 \put(110,60){\circle*{5}}
 \put(10,10){\line(2,1){100}}
 \put(10,10){\line(1,2){50}}
 \put(60,110){\line(1,-1){50}}
 \put(10,0){\small $y_1'y_2^3 u$}
 \put(45,65){\small $y_0y_1y_1'y_2u$}
 \put(66,110){\small $y_1^3(y_1')^2u$}
 \put(110,65){\small $y_0^3 u$}
 \end{picture}
 \]
 \caption{Functions corresponding to lattice points in $\Delta^\vee$. }\label{fig: labelled monomials}
\end{figure}
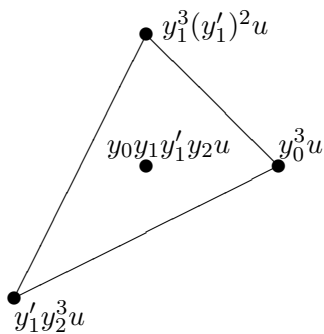

Now, let
$$
w := c_0y_0^3 u + c_1y_1^3(y_1')^2u + c_2y_1'y_2^3 u + c_3y_0y_1y_1'y_2u
$$
for some constants $c_i \in \kappa$. We need to check that we have that $\I_p \subseteq \sqrt{\partial w, \J_p}$ in order to be able to use Corollary \ref{cor: derivedequiv} (as $\I_q =\J_q$ this is automatic for the other triangulation).  Here, we compute the partial derivative of $w$ with respect to $y_1'$:
$$
\partial_{y_1'}w = 2 c_1y_1^3 (y_1')u + c_2 y_2^3 u + c_3 y_0y_1y_2u.
$$
Here we can see that the first and third summands are both in $\J_p$, hence $y_2u$ is in $\sqrt{\partial w, \J_p}$ as long as the constant $c_2$ is nonzero. In other words, one can apply Corollary \ref{cor: derivedequiv} as long as $c_2$ is nonzero.  Applying the framework outlined in Section~\ref{sec: algebraic}, we get:

\begin{equation}\begin{aligned}
U_p &:= \A^5\setminus Z(\I_p);  &  U_q &:= \A^5 \setminus Z(\I_q); \\
V_p &:= \A^5 \setminus Z(\J_p);  & V_q &:= \A^5 \setminus Z(\J_q); \\
V_p^x &:= \A^4 \setminus Z(\J_p^x);  & V_q^x &:= \A^4 \setminus Z(\J_q^x); \\
[V_p/S] &:= \op{tot}(\omega_{\P(2,3,1)}); & [V_q/S] &:= \op{tot}(\omega_{\P^2/\Z_3}); \\
X_p := [V_p^x/S] &= \P(2,3,1); \qquad& X_q := [V_q^x/S] &= \P^2/\Z_3; \\
Z_p &:= Z(w_p); & Z_q &:= Z(w_q);
\end{aligned}\end{equation}
where
\begin{equation}\begin{aligned}
w_p &:= c_0y_0^3 + c_1(y_1')^2 + c_2y_1'y_2^3 + c_3y_0y_1'y_2;\\
w_q &:= c_0y_0^3 + c_1y_1^3 + c_2y_2^3  + c_3y_0y_1y_2.
\end{aligned}\end{equation}
Then we have the equivalence of categories $\dbcoh{Z_p} \cong \dbcoh{Z_q}$.

The special case $c_0=c_1=c_2 = 1$ and $c_3 = 0$ is $w_p = F_{A^T}$ and $w_q = F_{(A')^T}$, which gives us the BHK mirrors to $Z_A$ and $Z_{A'}$. If we take $c_0=c_1=c_2 = 1$ and $c_3 = \lambda$, we have pencils. Also, we can take degenerate loci, for example, $c_2=1$ and $c_0=c_1=c_3=0$ so that $w_p= y_1'y_2^3 $ and $w_q = y_2^3$.  In general, we have locally-closed BHK mirror families that are pointwise derived equivalent to one another.

 \end{example}

\end{document}